\numberwithin{equation}{section}
\theoremstyle{plain}
\newtheorem{thm}{Theorem}[section]
\newtheorem{prop}[thm]{Proposition}
\newtheorem{lem}[thm]{Lemma}
\newtheorem{cor}[thm]{Corollary}
\theoremstyle{remark}\newtheorem{claim}[thm]{Claim}
\theoremstyle{definition}
\newtheorem{defn}[thm]{Definition}
\theoremstyle{remark}
\newtheorem{rk}[thm]{Remark}
\newcommand{\ve}{\varepsilon}
\newcommand{\les}{\lesssim}
\newcommand{\ind}{\operatorname{ind}}
\newcommand{\tr}{\operatorname{tr}}
\newcommand{\Ric}{\mathrm{Ric}}
\newcommand{\Riem}{\mathrm{Riem}}
\newcommand{\loc}{\mathrm{loc}}
\newcommand{\Div}{\operatorname{div}}
\DeclareFontFamily{U}{MnSymbolC}{}
\DeclareSymbolFont{MnSyC}{U}{MnSymbolC}{m}{n}
\DeclareFontShape{U}{MnSymbolC}{m}{n}{
	<-6>  MnSymbolC5
	<6-7>  MnSymbolC6
	<7-8>  MnSymbolC7
	<8-9>  MnSymbolC8
	<9-10> MnSymbolC9
	<10-12> MnSymbolC10
	<12->   MnSymbolC12}{}
\DeclareMathSymbol{\intprod}{\mathbin}{MnSyC}{'270}
\begin{document}
\title[Density and mass for data sets with boundary]
{Density and positive mass theorems for initial data sets with boundary}
\author{Dan A.\ Lee, Martin Lesourd, and Ryan Unger}

\address{Graduate Center and Queens College, City University of New York, 365 Fifth Avenue, New York, NY 10016}
\email{dan.lee@qc.cuny.edu}

\address{Black Hole Initiative, Harvard University, Cambridge, MA 02138}
\email{mlesourd@fas.harvard.edu}

\address{Department of Mathematics, Princeton University, Princeton, NJ 08544}
\email{runger@math.princeton.edu}

\maketitle

\begin{abstract}
We prove a harmonic asymptotics density theorem for asymptotically flat initial data sets with compact boundary that satisfy the dominant energy condition. We use this to settle the spacetime positive mass theorem, with rigidity, for initial data sets with apparent horizon boundary in dimensions less than $8$ without a spin assumption. 
\end{abstract}

\tableofcontents

\section{Introduction}
 
In the study of asymptotically flat manifolds, it is often useful to perturb the geometric data into something easier to work with. 
The first result of this kind was proved by R.~Schoen and S.-T.~Yau \cite{SY81ii}, who showed how to perturb an asymptotically flat, scalar-flat metric 
\begin{equation}\label{a}g_{ij}(x)=\delta_{ij}+ O_{2}(|x|^{-1}), \end{equation}
to an asymptotically flat, scalar-flat metric  satisfying 
\begin{equation}\label{b}\tilde g_{ij}(x)=u^4(x)\delta_{ij}\end{equation}
outside a compact set, where $u$ is a harmonic function with respect to the underlying Euclidean structure and $u(x)\to 1$ as $|x|\to\infty$. Such a metric has so-called Schwarzschild asymptotics, which can be seen by expanding $u$ in spherical harmonics, and so may be treated using the methods of \cite{SY79PMT}. 
This result can be generalized to show that an asymptotically flat, nonnegative scalar curvature metric can also be perturbed so that~\eqref{b} holds outside a compact set, while maintaining nonnegative scalar curvature \cite{LeeParker, Kuwert}. One may characterize these results as saying that metrics with asymptotic expansion \eqref{b} are \emph{dense} among metrics with asymptotic expansion \eqref{a}, either subject to the constraint $R_g=0$, or the constraint $R_g\ge0$, where $R_g$ denotes the scalar curvature of $g$.

Density theorems of this kind are used in every minimal hypersurface, marginally outer trapped surface (MOTS), and Jang reduction proof of the positive mass theorem \cite{SY79PMT, SY81ii, SY81i, SchoenPMT, L99, E13, EHLS, L16, L17, SY17}. The improved asymptotics are used to construct barriers and perform asymptotic analysis on area-minimizing hypersurfaces (or stable MOTS) spanning large spheres in the asymptotically flat region. We note, however, that the conformally flat structure in \eqref{b} is not necessary for the analysis; one merely needs that the metric is asymptotic to a Schwarzschild metric up to the mass order term \cite{CarlottoThesis, LUY21}. J.~Lohkamp observed in \cite{L99} that the density theorem described above can be used to compactify an asymptotically flat metric with negative mass, thereby reducing the Riemannian positive mass theorem to the theorem that a connected sum of a torus with a compact manifold cannot carry a metric with positive scalar curvature \cite{SY79, GL80}. See also \cite{CP11, L16, SY17, LUY20} for simplifications and further applications of this technique. 

Recall that $(M, g, k)$ is an \emph{initial data set} if $(M, g)$ is a Riemannian manifold equipped with a symmetric $(0,2)$-tensor field $k$, and that the \emph{mass density} $\mu$ (a scalar) and the \emph{momentum density} $J$ (a vector field) are defined by
\begin{align*}
    \mu &=\tfrac 12\left(R_g-|k|_g^2+(\tr_gk)^2\right)\\
   J^i &= (\Div_g k)^i -\nabla^i (\tr_g k).
\end{align*}
We say that $(M,g,k)$ satisfies the \emph{dominant energy condition} (\emph{DEC}) if $\mu \ge |J|_g$. 
It is often convenient to replace $k_{ij}$ by the \emph{momentum tensor}
\[\pi^{ij}=k^{ij}-(\tr_gk)g^{ij},\]
and we will (abusively) also refer to $(M, g, \pi)$ as an initial data set.

The first density theorem for initial data sets was proved by J.~Corvino and Schoen \cite{CorvinoSchoen},
who showed that asymptotically flat vacuum initial data sets $(g,\pi)$ can be approximated by vacuum initial data satisfying the \emph{harmonic asymptotics} condition  
\begin{equation}
    \tilde g_{ij}=u^4\delta_{ij},\qquad \tilde\pi^{ij}= u^{-6}\mathfrak L_\delta Y^{ij}
\end{equation}
outside a compact set, for some function $u$ and vector field $Y$ which have good asymptotic expansions. The notation $\mathfrak L$ is defined by the formula $\mathfrak L_g Y^{ij}:= (L_Y g)^{ij} - (\Div_g Y)g^{ij}$ for an arbitrary metric $g$, where $L_Y$ denotes the Lie derivative.
In the extension of the positive mass theorem to initial data sets in higher dimensions \cite{EHLS}, M.~Eichmair, Lan-Hsuan Huang, 
the first named author, and Schoen proved a harmonic asymptotics density theorem for initial data sets satisfying the dominant energy condition. This theorem also plays a crucial role in Eichmair's Jang reduction and rigidity theorems \cite{E13}, as well as Lohkamp's compactification approach to the spacetime positive mass theorem \cite{L17}. 

In the present paper, we generalize the density theorems of Corvino--Schoen~\cite{CorvinoSchoen} and Eichmair--Huang--Lee--Schoen~
\cite{EHLS} to allow for initial data sets with compact boundary. 

\begin{thm}[Density theorem for initial data sets with boundary] \label{DECdensity}
Let $(M^n,g,k)$ be a complete asymptotically flat initial data set with compact boundary $\partial M$, such that the dominant energy condition, $\mu \ge |J|_g$, holds on $M$. Let $p>n$ and $\frac{n-2}{2}<q<n-2$ such that $q$ is less than the decay rate of $(g,k)$. Let $\theta^+$ denote the outer null expansion of $\partial M$.

Then for any $\ve>0$, there exists an asymptotically flat initial data set $(\tilde g,\tilde k)$ on $M$ also satsifying the dominant energy condition such that $(\tilde g,\tilde k)$ has harmonic asymptotics in each end of $M$, $(\tilde g,\tilde k)$ is $\ve$-close to $(g,k)$ in $W^{2,p}_{-q}\times W^{1,p}_{-q-1}$, the new constraints $(\tilde \mu,\tilde J)$ are $\ve$-close to $(\mu,J)$ in $L^1$, and the new outer null expansion $\tilde \theta^+$ is \emph{equal to} $\theta^+$ on $\partial M$. 

Furthermore, we can choose $(\tilde g,\tilde k)$ such that the \emph{strict} dominant energy condition holds, $\tilde \mu>|\tilde J|_g$. Simultaneously, $(\tilde \mu,\tilde J)$ may be chosen to decay as fast as we like. 

Alternatively, we can choose $(\tilde g,\tilde k)$ to be vacuum (that is, $\tilde\mu =|\tilde J|_g=0$) outside a compact set. Moreover, if $(g,k)$ is vacuum everywhere, then $(\tilde g,\tilde k)$ can be chosen to be vacuum everywhere. 
\end{thm}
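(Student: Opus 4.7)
The plan is to adapt the strategy of Eichmair--Huang--Lee--Schoen \cite{EHLS} to our boundary setting, with the key additional requirement that all modifications be confined to a region disjoint from a fixed neighborhood $U$ of $\partial M$, so that the exact preservation of $\theta^+|_{\partial M}$ follows for free. First I would construct an approximating family $(\bar g_\sigma,\bar\pi_\sigma)$ depending on a cutoff parameter $\sigma$: in each end, $(\bar g_\sigma,\bar\pi_\sigma)$ equals $(g,\pi)$ on $\{|x|\le\sigma\}$ and has the harmonic-asymptotics form $\bar g_\sigma=u_\sigma^4\delta$, $\bar\pi_\sigma^{ij}=u_\sigma^{-6}\mathfrak L_\delta Y_\sigma^{ij}$ outside a slightly larger ball, with $(u_\sigma,Y_\sigma)$ chosen to match the ADM mass and momentum of $(g,\pi)$ and interpolated smoothly on the transition annulus. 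As $\sigma\to\infty$, one has $(\bar g_\sigma,\bar\pi_\sigma)\to(g,\pi)$ in $W^{2,p}_{-q}\times W^{1,p}_{-q-1}$ and $(\bar\mu_\sigma,\bar J_\sigma)\to(\mu,J)$ in $L^1$, but DEC is typically violated on the transition annulus.

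Next I would use a weighted implicit function theorem to find a correction $(h,w)$ supported in $M^\circ\setminus U$, together with perturbations $(\delta u_\sigma,\delta Y_\sigma)$ of the harmonic parameters, such that the corrected data $(\bar g_\sigma+h,\bar\pi_\sigma+w)$ has prescribed constraints $(\mu_\ast,J_\ast)$. Taking $(\mu_\ast,J_\ast)=(\mu+\eta,J)$ with $\eta\ge 0$ a smooth bump absorbing the DEC deficit yields strict DEC; choosing $\eta$ with arbitrarily fast decay gives the corresponding conclusion. The ``vacuum outside a compact set'' variant is obtained by prescribing $(\mu_\ast,J_\ast)=(0,0)$ outside a sufficiently large ball, once a Corvino--Schoen-type adjustment has arranged $(\bar\mu_\sigma,\bar J_\sigma)=0$ there; and if $(g,k)$ is vacuum everywhere, one prescribes $(\mu_\ast,J_\ast)=(0,0)$ globally. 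In every case, since $(h,w)$ vanishes on $U$ and the harmonic-parameter perturbations only affect the end, $\theta^+|_{\partial M}$ equals $\theta^+(g,\pi)|_{\partial M}$.

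The main obstacle is verifying solvability of the correction step, i.e.\ surjectivity of the linearization of the constraint map on the restricted class of perturbations supported in $M^\circ\setminus U$ (together with harmonic-parameter perturbations). In the boundary-free EHLS setting this follows from the vanishing of the cokernel, whose elements are the KIDs; under $q>(n-2)/2$ there are no KIDs with appropriate decay at infinity. In our setting, requiring the correction to vanish on $U$ effectively imposes Dirichlet data on $\partial U$, which enlarges the potential cokernel to KIDs on $M\setminus U$ with no boundary condition at $\partial U$. I would argue this cokernel is still trivial either by a unique-continuation argument for the overdetermined KID system, or by a Runge-type approximation argument that replaces a general EHLS correction with one supported in $M^\circ\setminus U$, modulo an absorbable error in the harmonic parameters. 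Sorting out this surjectivity, uniformly in $\sigma$ and in a manner compatible with the strict DEC and vacuum variants, is the technical heart of the proof.
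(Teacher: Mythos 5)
Your strategy—freeze the data on a neighborhood $U$ of $\partial M$ so that $\theta^+$ is trivially preserved—is genuinely different from what the paper does, and it runs into two concrete gaps.

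First, the paper does \emph{not} try to keep the perturbation supported away from the boundary. Instead, the conformal parameters $(u,Y)\in\mathcal{C}$ are allowed to be nontrivial up to $\partial M$, and the null-expansion map $\Theta$ is treated as a bona fide boundary condition. The whole technical effort goes into supplementing $\Theta$ with the auxiliary boundary operators $B_2, B_3$ so that $(DT,DB)$ forms an Agmon--Douglis--Nirenberg elliptic boundary value problem (Proposition~\ref{elliptic}); that is what produces the Fredholm property underlying Proposition~\ref{prop1} and Lemma~\ref{main}. If you instead require $(h,w)$ and the conformal data to vanish on $U$, you are solving the linearized constraint equations on $M\setminus U$ with a new artificial boundary $\partial U$. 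Even granting that the cokernel there is trivial via the decay bootstrap, the solution $(h,w)$ is only in a weighted $W^{2,p}$ space on $M\setminus U$, and extending it by zero across $\partial U$ does not give an admissible tensor on $M$ unless $(h,w)$ vanishes to high order at $\partial U$. Forcing that brings you to the Corvino--Schoen exponentially-weighted local gluing framework, which is a substantially harder machine than the global inverse function theorem you invoke, and your sketch does not supply it. The paper's finite-dimensional compensating space $K_2$ of smooth compactly supported tensors sidesteps this entirely precisely because the conformal data carries the load near $\partial M$.

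Second, and independently, your DEC argument does not work as stated. Prescribing $(\mu_*,J_*)=(\mu+\eta,J)$ with $\eta\ge 0$ increases $\mu$ but does not control $|\tilde J|_{\tilde g}$, because the perturbed metric $\tilde g$ changes the norm in which $J$ is measured, so $\tilde\mu>\mu\ge|J|_g$ does \emph{not} yield $\tilde\mu>|\tilde J|_{\tilde g}$. The paper addresses exactly this with the Corvino--Huang modified constraint operator $\overline\Phi_{(g,\pi)}$ (Lemma~\ref{CH}): prescribing $\overline\Phi_{(g,\pi)}(\tilde g,\tilde\pi)-\overline\Phi_{(g,\pi)}(g,\pi)=(2\psi,0)$ with $\psi\ge 0$ forces $|\tilde J|_{\tilde g}\le |J|_g$, which is the inequality you actually need. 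Without replacing $\Phi$ by $\overline\Phi_{(g,\pi)}$ in your correction step, the strict-DEC conclusion does not follow.

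In short, you have correctly identified that surjectivity with a support restriction is the crux, but the resolution requires either a Corvino--Schoen gluing argument at $\partial U$ or the paper's route of making $\Theta$ an honest elliptic boundary condition; and separately the DEC preservation needs the modified constraint operator, not a bare $\eta\ge 0$ bump.
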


\begin{rk}
More generally, we may prescribe $\tilde \theta^+$ to be any function sufficiently close to $\theta^+$ in the fractional Sobolev space $W^{1-\frac 1p,p}(\partial M)$. This theorem is more precisely stated as Theorem \ref{DECdensity2} below. In particular, $\tilde \theta^+$ may be chosen to be strictly less than $\theta^+$ at every point. Moreover, we may alternatively choose to prescribe the inner expansion $\theta^-$ (instead of $\theta^+$) on any given components of $\partial M$.
\end{rk}

The perturbation described in Theorem~\ref{DECdensity} only changes the ADM energy-momentum by a small amount, and hence we can use our density theorem to settle the full spacetime positive mass theorem with boundary and without a spin assumption, at least in dimensions where we have regularity of $C$-minimizing integral currents. 

\begin{thm}[Spacetime positive mass theorem with boundary] \label{PMT}
Let $3\le n\le 7$, and let $(M^n,g,k)$  be a complete asymptotically flat initial data set with compact boundary $\partial M$ such that the dominant energy condition holds on $M$, and each component of $\partial M$ is either weakly outer trapped ($\theta^+\le 0$) or weakly inner untrapped ($\theta^-\ge 0$), with respect to the normal pointing into $M$. Then $E\ge |P|$ in each end, where $(E,P)$ denotes the ADM energy-momentum vector of $(g,k)$. 
\end{thm}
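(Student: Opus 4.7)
The plan is to use Theorem~\ref{DECdensity} to reduce to a setting where the Jang reduction of Schoen--Yau and Eichmair applies, and then invoke the Riemannian positive mass theorem in dimensions $3\le n\le 7$ (which holds without a spin assumption).

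First, fix an end of $M$ with ADM energy--momentum $(E,P)$. For any $\ve>0$, Theorem~\ref{DECdensity} together with the refinement allowing prescription of the null expansions on $\partial M$ produces an initial data set $(\tilde g,\tilde k)$ with harmonic asymptotics in every end, strict dominant energy condition $\tilde\mu>|\tilde J|_{\tilde g}$, ADM energy--momentum $\ve$-close to $(E,P)$, and with $\tilde\theta^+<0$ on each weakly outer trapped component of $\partial M$ and $\tilde\theta^->0$ on each weakly inner untrapped component. It suffices to prove $\tilde E\ge|\tilde P|$ for such perturbed data and then take $\ve\to 0$.

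Next, because each component of $\partial M$ is now strictly trapped or strictly untrapped in the relevant direction, it supplies a one-sided barrier for Jang's equation $H_{\graph(f)}-\tr_{\graph(f)}\tilde k=0$ with blow-up prescribed on $\partial M$. Following Schoen--Yau~\cite{SY81i} for $n=3$ and Eichmair~\cite{E13} for $3\le n\le 7$, we solve Jang's equation on $M\setminus\partial M$ with $f\to 0$ at infinity and $f\to\pm\infty$ at the boundary components in the direction dictated by the barrier sign. The resulting Jang graph $(\bar M,\bar g)$ is a Riemannian manifold whose ends consist of the asymptotically flat ends of $(M,\tilde g)$ (with unchanged ADM energies) together with asymptotically cylindrical ends over $\partial M$. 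The Schoen--Yau divergence identity for $R_{\bar g}$, combined with a conformal rescaling by the appropriate decaying eigenfunction of the conformal Laplacian, then produces an asymptotically flat manifold $(\hat M,\hat g)$ with $R_{\hat g}\ge 0$ in which the cylindrical ends have been compactified into asymptotically flat ends of zero ADM mass while the ADM energy of the distinguished end is at most $\tilde E$. Applying the Riemannian positive mass theorem in dimensions $3\le n\le 7$ without spin~\cite{SchoenPMT,L16,SY17} to $(\hat M,\hat g)$ gives $\tilde E\ge 0$. The standard Lorentz boost trick---running the entire argument for a one-parameter family of initial slices tilted inside the would-be ambient spacetime, which alters $\tilde k$ but not the harmonic asymptotics or the strict DEC---upgrades $\tilde E\ge 0$ in every frame to the future-causality of $(\tilde E,\tilde P)$, i.e., $\tilde E\ge|\tilde P|$.

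The main obstacle is the Jang reduction step: one must show that the strict trapping of $\partial M$ guaranteed by Theorem~\ref{DECdensity} both forces $f$ to blow up \emph{precisely} at $\partial M$ (ruling out interior blow-up MOTS other than $\partial M$) and yields cylindrical ends over $\partial M$ with sufficient asymptotic regularity for the Schoen--Yau compactification to produce an admissible input for the Riemannian PMT, in all dimensions $3\le n\le 7$. Theorem~\ref{DECdensity} is engineered precisely for this application: strict DEC plus strict trapping provide the geometric input on which the existence, compactness, and asymptotic analysis of Jang's equation from~\cite{SY81i,E13} can be adapted to the boundary setting, while harmonic asymptotics supply the ADM-level control needed in the end.
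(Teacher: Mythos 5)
Your proposal diverges from the paper's proof in an essential way, and the point of divergence contains a genuine gap. The paper does \emph{not} prove Theorem~\ref{PMT} via the Jang reduction. It invokes the MOTS method of Eichmair--Huang--Lee--Schoen~\cite{EHLS}: after perturbing via Theorem~\ref{DECdensity2} to obtain harmonic asymptotics, strict DEC, and strictly trapped/untrapped boundary components, one constructs stable marginally outer trapped hypersurfaces spanning large spheres $\Gamma^{n-2}$ on a cylinder $C$, using Eichmair's existence theorem~\cite[Theorem~1.1]{E09}. The only new ingredient over the boundaryless case of~\cite{EHLS} is that the strictly trapped boundary components can be folded into $\partial_2\Omega$ and the strictly inner untrapped ones into $\partial_1\Omega$, so $\partial M$ acts as a barrier in the MOTS construction. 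The Jang equation appears in the paper only in Section~\ref{S4.2}, and only to prove $E>0$ (not $E\ge|P|$), as one piece of the separate rigidity argument for Theorem~\ref{strictPMT}.

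The gap in your argument is the ``Lorentz boost trick.'' As you note yourself, there is no ambient spacetime here: the initial data set satisfies only the DEC, not the Einstein equations coupled to a specified matter model, so there is no evolution and no family of boosted slices. Without a spacetime, ``tilting'' has no meaning, and the claim that a boost would change $\tilde k$ but preserve the asymptotics and strict DEC is in any case false even when a spacetime exists---a Lorentz boost modifies $g$ and $k$ simultaneously and alters the asymptotic structure of the slice. The Schoen--Yau/Eichmair Jang reduction by itself yields only $E\ge 0$; upgrading this to $E\ge|P|$ was precisely the problem that~\cite{EHLS} solved, and they did so by the MOTS-over-a-tilted-slab construction, not by boosting. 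Your Jang reduction step up to the conclusion $\tilde E\ge 0$ is essentially sound (and is close in spirit to what the paper does in Section~\ref{S4.2} to get $E>0$), but it does not establish the theorem as stated. To repair the argument you would need to replace the boost step by the stable-MOTS argument of~\cite{EHLS}, which is exactly what the paper does.
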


The $\partial M=\emptyset$ case of this theorem was proved by Eichmair, Huang, the first author, and Schoen in~\cite{EHLS}.
The $\partial M\neq\emptyset$ case is desirable from a physical perspective, since it verifies the intuition that the geometry lying behind an ``apparent horizon'' cannot influence the asymptotic geometry. Previously, the $\partial M\neq\emptyset$ case  was only known to be true for spin manifolds, by work of M. Herzlich \cite{Herzlich} (see \cite{GHHP}), who implemented Witten's method \cite{Witten} with a boundary condition. In 3 dimensions, Theorem~\ref{PMT} also follows from recent work of S.~Hirsch, D.~Kazaras, and M.~Khuri~\cite{HKK}, using an unrelated method. The recent note of G.~Galloway and the first author \cite{GallowayLee} proves Theorem~\ref{PMT} under the stronger assumption that each component of $\partial M$ either has $\theta^+< 0$ or $\theta^-> 0$.

Despite these advances and the general belief that Theorem~\ref{PMT} is true, the problem has remained open until now. It is natural to adapt the proof of the $\partial M=\emptyset$ case in~\cite{EHLS}, and in fact, the proof is essentially unchanged for $\partial M\neq \emptyset$ \emph{if} one already has harmonic asymptotics. However, it is not clear how to achieve harmonic asymptotics as in~\cite[Theorem 18]{EHLS} when a boundary is present. This is what we accomplish with Theorem~\ref{DECdensity}, and we explain how Theorem~\ref{PMT} follows from Theorem~\ref{DECdensity} and~\cite{EHLS} in Section \ref{S4.inequality}.
The reason why Theorem~\ref{DECdensity} is a nontrivial generalization of Theorem 18 of~\cite{EHLS} is that the latter is proved by solving an elliptic system, and the weakly outer trapped condition on the boundary is not an elliptic boundary condition for this system. We solve this problem by supplementing the weakly outer trapped condition with other conditions to create an elliptic boundary condition.

Since one expects that $E=|P|$ is only possible if the initial data set sits inside Minkowski space, which does not contain weakly outer trapped surfaces, one should be able to strengthen Theorem~\ref{PMT} to conclude that $E>|P|$. Indeed, we are able to do this if one is willing to make stronger assumptions about the asymptotics.

\begin{thm}\label{strictPMT}
Assume the hypotheses of Theorem~\ref{PMT} with $\partial M\neq \emptyset$, and furthermore, assume that $(M, g, k)$ satisfies the stronger asymptotic assumption appearing in Theorem~\ref{ADMnull}. Then $E>|P|$.
\end{thm}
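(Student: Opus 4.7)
The plan is to argue by contradiction. Assume $E = |P|$. First, apply Theorem \ref{DECdensity} to replace $(g,k)$ with an approximating initial data set $(\tilde g,\tilde k)$ that has harmonic asymptotics, satisfies the strict dominant energy condition $\tilde\mu > |\tilde J|_{\tilde g}$, and whose outer null expansion on $\partial M$ coincides with $\theta^+$ (so the weakly outer trapped condition is inherited); and likewise with $\theta^-$ on any components where that condition is assumed. Under the stronger asymptotic assumption of Theorem \ref{ADMnull}, the ADM energy-momentum depends continuously on the data in $W^{2,p}_{-q}\times W^{1,p}_{-q-1}$, so $(\tilde E,\tilde P) \to (E,P)$ as the approximation tightens.

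Next, exploit the strict DEC with a compactly-supported conformal mass-reducing deformation. Let $\chi$ be a smooth cutoff on $M$, supported in the chosen end outside a large Euclidean ball and vanishing near $\partial M$ and in any other ends. For a small parameter $s > 0$, define $u_s := 1 - s\chi |x|^{-(n-2)}$ and $\hat g_s := u_s^{4/(n-2)} \tilde g$, and keep $\tilde\pi$ unchanged under the deformation (so $\tilde k$ transforms by the standard conformal rule). Since $u_s \equiv 1$ near $\partial M$, the outer null expansion there is unchanged, so the boundary hypothesis of Theorem \ref{PMT} is inherited. The standard conformal-change formulae give $\hat E_s = \tilde E - \kappa_n s + O(s^2)$ with $\kappa_n > 0$, while the asymptotic invariance of $\tilde\pi$ yields $\hat P_s = \tilde P$.

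It remains to verify that the DEC holds for $(\hat g_s,\hat k_s)$. The variations $\delta\mu$ and $\delta J$ coming from the conformal deformation are $O(s)$ and supported in the annulus where $\nabla\chi$ is nonzero. Because Theorem \ref{DECdensity} allows us to prescribe the decay of $(\tilde\mu,\tilde J)$ essentially freely, we may arrange that the strict DEC slack $\tilde\mu - |\tilde J|$ dominates $|\delta\mu| + |\delta J|$ on the support of $\nabla\chi$; then for $|s|$ sufficiently small, $(\hat g_s,\hat k_s)$ still satisfies the DEC. Applying Theorem \ref{PMT} now yields $\hat E_s \geq |\hat P_s|$, which rearranges to
\begin{equation*}
\kappa_n s + O(s^2) \leq \tilde E - |\tilde P|.
\end{equation*}
Fix $s > 0$ small, and then choose the density approximation tight enough that the right-hand side is much smaller than $\kappa_n s$; this contradicts the displayed inequality.

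The main obstacle is the compatibility of the strict DEC with the mass-reducing perturbation: the $O(s)$ perturbation error must be controlled uniformly by the (possibly small) strict DEC slack in the annulus, which requires a judicious choice of the decay and magnitude of $(\tilde\mu,\tilde J)$ relative to the placement and scale of the cutoff $\chi$. A cleaner variant, if the alternative clause of Theorem \ref{DECdensity} suffices, is to take $(\tilde g,\tilde k)$ to be vacuum outside a compact set, place the support of $\nabla\chi$ in a transition region where the strict DEC slack is bounded below, and verify DEC-preservation there directly; either way, the flexibility of Theorem \ref{DECdensity} should close the argument.
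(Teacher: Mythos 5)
Your approach is genuinely different from the paper's, and unfortunately it contains a fatal gap that a sanity check exposes. Notice that your argument never uses the hypothesis $\partial M\neq\emptyset$ in an essential way: the density theorem, the conformal deformation $u_s=1-s\chi|x|^{-(n-2)}$, and Theorem~\ref{PMT} all apply equally well when $\partial M=\emptyset$. But the conclusion $E>|P|$ is \emph{false} for $\partial M=\emptyset$ (e.g.\ a flat slice of Minkowski has $E=|P|=0$). So the argument must break down somewhere, and the weak point is exactly the one you flag yourself: the trade-off between the strict DEC slack and the size of the mass-reducing perturbation.

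Concretely, the strict DEC that you import from Theorem~\ref{DECdensity} is produced by \emph{adding} a nonnegative quantity to $\mu$ (Proposition~\ref{strictDEC} and the proof of Theorem~\ref{DECdensity2}), and that addition necessarily \emph{raises} $\tilde E$ relative to $E$ by an amount comparable to the integral of the slack. Your conformal factor then trades some of that slack for mass reduction: the DEC-preservation constraint forces the parameter $s$ to be bounded by (a constant times) the available slack in the annulus where $\nabla\chi\neq 0$, while the mass drop is $\kappa_n s+O(s^2)$. The positive mass theorem itself (which is true with $\partial M=\emptyset$) forces the constants in this trade-off to be exactly unfavorable: the mass gain from adding slack can never be fully recaptured by the conformal reduction. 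So ``fix $s$, then tighten $\varepsilon$'' cannot be made to work — as $\varepsilon\to 0$, the maximal admissible $s$ shrinks at least as fast as $\tilde E-|\tilde P|$, and there is no room for a contradiction.

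The paper's proof is structured precisely to avoid this. It splits the equality case into two steps, and only the second step uses the boundary. First (Section~\ref{S4.1}, Theorem~\ref{ADMnull}), one shows that $E=|P|$ forces $E=|P|=0$ under the stronger asymptotic hypothesis: the data set $(g,\pi)$ locally minimizes the modified Regge--Teitelboim Hamiltonian over the level set of $(\overline\Phi_{(g,\pi)},\Theta)$, Proposition~\ref{modified_surj} gives surjectivity of the linearized constraint-plus-boundary operator, so Lagrange multipliers produce a solution $(f,X)$ of $D\overline\Phi_{(g,\pi)}|_{(g,\pi)}^*(f,X)=0$ asymptotic to $(E,-2P)$, and the Beig--Chru\'sciel argument then forces $E=|P|=0$. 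This step makes no attempt to decrease mass and hence does not run into the trade-off above. Second (Section~\ref{S4.2}, Theorem~\ref{E=0}), one shows that $E=0$ is impossible when $\partial M\neq\emptyset$ by running the capillary-regularized Jang equation with the boundary as a barrier: the barrier forces the Jang graph to blow up over a nonempty union of MOTS/MITS, so the resulting graph has a cylindrical end, which is incompatible with the scalar-flat, zero-mass, Ricci-flat splitting analysis that $E=0$ would entail. It is this second step, and only this one, that genuinely exploits $\partial M\neq\emptyset$, and it is the ingredient your conformal argument lacks.
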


 By work of R.~Beig and P.~Chru\'{s}ciel~\cite{BeigChrusciel} (see also~\cite{ChruscielMaerten}), this result should also hold for all spin manifolds, and this argument is sketched\footnote{ Note that this discussion only appears in the arXiv version of the paper.} in \cite[Remark 11.5]{BartnikChrusciel} in dimension 3. The 3 dimensional case was also obtained by~\cite{HKK}, and more recently, Hirsch and Yiyue Zhang used this approach to remove the ``stronger asymptotic assumption'' in 3 dimensions~\cite{HirschZhang}. Note that the Riemannian case of Theorem~\ref{strictPMT} is a direct consequence of the Riemannian Penrose inequality, which is known in dimensions $\le7$~\cite{BrayLee}.

Our proof of Theorem~\ref{strictPMT} follows fairly easily from Theorem~\ref{PMT} combined with known techniques in the $\partial M=\emptyset$ case. Specifically, we break the proof into two parts. In the first part, presented in Section \ref{S4.1}, we suppose that $E=|P|$ and conclude that $E=|P|=0$ by adapting the $\partial M=\emptyset$ proof by Huang and the first author~\cite{HuangLee}. This is where the stronger asymptotic assumption in Theorem~\ref{ADMnull} is needed. In the second part, presented in Section \ref{S4.2}, we show that $E=0$ is impossible by examining Eichmair's Jang equation proof (in the $\partial M=\emptyset$ case) that $E\ge0$ in~\cite{E13} (which itself generalized Schoen--Yau's pioneering result in dimension 3~\cite{SY81ii}). Technically, in dimension 3 our argument requires the assumption that $\tr_g k=O(|x|^{-\gamma})$ for some $\gamma>2$, but we choose to leave this assumption out of the statement of Theorem~\ref{strictPMT} by explicitly relying on either~\cite{HKK} or~\cite[Remark 11.5]{BartnikChrusciel} (both of which require very different methods than the ones discussed in this article). 

Besides the positive mass theorem, 
another application of Theorem \ref{DECdensity} concerns the gluing problem for initial data sets. Indeed, since the gluing-across-annulus theorem of Corvino--Schoen \cite{CorvinoSchoen} is appropriately local and done in a region where the data is vacuum and has good asymptotics, we can combine it with Theorem \ref{DECdensity} to obtain the following.
\begin{cor}\label{gluingproblem}
Let $(M,g,k)$ be an asymptotically flat initial data set satisfying the assumptions of Theorem \ref{DECdensity}, such that $E>|P|$. Then, for any $\varepsilon>0$, there is an initial data set $(\tilde g,\tilde k)$ with the following properties:
\begin{itemize}
    \item $(\tilde g,\tilde k)$ satisfies the dominant energy condition, 
    \item the outer null expansion of $\partial M$ with respect to $(\tilde g,\tilde k)$ is unchanged, that is,  $\tilde \theta^+=\theta^+$,
    \item $(\tilde g,\tilde k)$ is $\varepsilon$-close to $(g, k)$ in $W^{2,p}_{-q}\times W^{1,p}_{-q-1}$,
    \item $(\tilde\mu, \tilde J)$ is  $\varepsilon$-close to $(\mu, J)$ in $L^1$, 
    \item outside a compact set containing $\partial M$, $(\tilde g,\tilde k )$ is isometric to an initial data set for a Kerr spacetime\footnote{More specifically, this Kerr initial data comes from an element of the ``reference family'' for Kerr, as described in~\cite{ChruscielDelay}.} with ADM energy-momentum $(\tilde E, \tilde P)$, where $|\tilde E- E| + |\tilde P- P| < \varepsilon$.
\end{itemize}
\end{cor}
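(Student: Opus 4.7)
The plan is to combine the density theorem (Theorem~\ref{DECdensity}) with the Corvino--Schoen gluing-across-annulus construction of~\cite{CorvinoSchoen}. The rough idea is first to use the ``vacuum outside a compact set'' alternative of Theorem~\ref{DECdensity} to approximate $(g,k)$ by an initial data set that already has harmonic asymptotics and is exactly vacuum in the asymptotic region while preserving $\theta^+$ on $\partial M$, and then to do a purely local Corvino--Schoen gluing in a large annulus in the chosen end to interpolate to an exact Kerr reference from~\cite{ChruscielDelay}. All of the work near $\partial M$ is done in the first step; the second step is supported in a region disjoint from $\partial M$, so the boundary data is preserved automatically.

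More precisely, fix $\varepsilon>0$. The vacuum-outside-a-compact-set version of Theorem~\ref{DECdensity} yields an initial data set $(\tilde g_1,\tilde k_1)$ on $M$ that satisfies the DEC, is vacuum outside a compact set $K\supset\partial M$, has harmonic asymptotics in each end, satisfies $\tilde\theta^+_1=\theta^+$ on $\partial M$, is $\varepsilon/2$-close to $(g,k)$ in $W^{2,p}_{-q}\times W^{1,p}_{-q-1}$, and has constraints $\varepsilon/2$-close to $(\mu,J)$ in $L^1$. Since the ADM energy-momentum depends continuously on the data in these weighted Sobolev norms and $E>|P|$ by hypothesis, we may further arrange that the ADM vector $(\tilde E_1,\tilde P_1)$ of $(\tilde g_1,\tilde k_1)$ is strictly sub-extremal, $\tilde E_1>|\tilde P_1|$, and arbitrarily close to $(E,P)$.

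Now choose $R$ large enough that the annulus $A_R=\{R\le|x|\le 2R\}$ in the chosen end is disjoint from $K$; on $A_R$ the data $(\tilde g_1,\tilde k_1)$ is vacuum with harmonic asymptotics, which is precisely the setting of the Corvino--Schoen construction. That construction, applied with the Kerr reference family described in~\cite{ChruscielDelay}, produces a vacuum initial data set $(\tilde g,\tilde k)$ that agrees with $(\tilde g_1,\tilde k_1)$ on $\{|x|\le R\}$ (hence on a neighborhood of $\partial M$, so $\tilde\theta^+=\theta^+$) and is isometric to a Kerr reference solution on $\{|x|\ge 2R\}$, with ADM vector $(\tilde E,\tilde P)$ that can be freely prescribed in a small open neighborhood of $(\tilde E_1,\tilde P_1)$. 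By enlarging $R$ and choosing the Kerr parameters $\varepsilon/2$-close to $(\tilde E_1,\tilde P_1)$, the gluing correction on $A_R$ has $W^{2,p}_{-q}\times W^{1,p}_{-q-1}$ norm and $L^1$ constraint change at most $\varepsilon/2$. Combining the two steps gives $(\tilde g,\tilde k)$ satisfying all of the bulleted conclusions; the DEC holds pointwise because we never touch the data on $\{|x|\le R\}$ and we are exactly vacuum or exactly Kerr beyond.

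The main obstacle is verifying that the Corvino--Schoen gluing can ingest the harmonic-asymptotics output of Theorem~\ref{DECdensity} and deliver a Kerr end with prescribable ADM parameters and small weighted-norm deformation. This reduces to the surjectivity of the constraint map on the Kerr reference family together with the quantitative inverse function theorem for the linearized constraint operator on $A_R$ in the weighted spaces under consideration, both of which are black-boxed from~\cite{CorvinoSchoen} and~\cite{ChruscielDelay}. Once that is accepted, the rest is just combining error estimates, since no perturbation occurs near $\partial M$ in the gluing step.
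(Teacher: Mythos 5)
Your proposal is correct and matches the paper's intended argument, which is only briefly sketched in the paragraph preceding the corollary: first apply the vacuum-outside-a-compact-set alternative of Theorem~\ref{DECdensity} (which preserves $\theta^+$ and produces harmonic asymptotics), then perform the Corvino--Schoen/Chru\'sciel--Delay gluing across a large annulus disjoint from $\partial M$, using $E>|P|$ to access the sub-extremal Kerr reference family. The paper does not give further details, so your write-up is a faithful and reasonable elaboration of the same strategy.
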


Finally, we note that a version of Corollary \ref{gluingproblem} exists for the gluing result of Carlotto--Schoen \cite{CarlottoSchoen}. The gluing there is done along the boundary of a cone which goes out to infinity, and the geometry at the gluing interface has to satisfy a certain smallness which is implemented by taking the cone to lie sufficiently far out in the asymptotic region. Given $N$ asymptotically flat initial data sets each satisfying the assumptions of Theorem \ref{DECdensity}, we can use Theorem \ref{DECdensity} to perturb each of these initial data sets to $N$ new initial data sets which are all vacuum outside a compact set and satisfy harmonic asymptotics. One can then glue these $N$ new initial data sets together using \cite{CarlottoSchoen} to produce a final asymptotically flat initial data set with the following properties: 
\begin{itemize}
\item the dominant energy condition holds,
\item there are $N$ boundary components, each of which has the same outer null expansion as the original initial data sets,
\item the initial data set is vacuum outside a compact set, 
\item the geometry is supported in $N$ (non-intersecting) cones, that is, the region between these cones is Euclidean, with vanishing $k$.
\end{itemize}

If one starts with $N$ initial data sets with outermost MOTS (marginally outer trapped surface) boundaries, one might hope to think of the object resulting from this construction as a model for a black hole with $N$ components. However, it is not clear whether the MOTS boundary of this new initial data set will be \emph{outermost}. It is an interesting  question whether one can guarantee the outermost property, under some reasonable assumptions. \\ \indent 
On this point, we note that P.~Chru\'sciel and R.~Mazzeo \cite{ChruscielMazzeo} have constructed initial data sets whose apparent horizon is composed of multiple connected components. For a complementary perspective in the setting of positive cosmological constant $\Lambda>0$, P.~Hintz \cite{Hintz} constructed a spacetime modelled on Schwarzschild--de Sitter whose future conformal boundary defines an event horizon with multiple connected components.

\vspace{3mm}

\textbf{Acknowledgments.} We thank Lan-Hsuan Huang for useful  discussions at the start of this project, Greg Galloway for his interest in the problem, and Piotr Chru\'sciel for various helpful comments. 

\section{Preliminaries} 

\subsection{Notation and definitions} \label{defs}

Let $M^n$ be a smooth $n$-dimensional manifold ($n\ge 3$) with compact boundary $\partial M$, and fix a smooth background metric $\overline g$ which is identically Euclidean on the finitely many noncompact ends of $M$, which are all diffeomorphic to $\mathbb R^n$ minus a ball. In this setting there are natural definitions of weighted Sobolev spaces $W^{k,p}_{s}$ and weighted H\"{o}lder spaces $C^{k,\alpha}_{s}$, as in \cite{Lee}, for instance.

\begin{defn}\label{AFdata}
We say that an initial data set $(M^n,g,k)$ is \emph{asymptotically flat} if $(g,k)$ is locally $C^{2,\alpha}\times C^{1,\alpha}$ for some $0<\alpha<1$, and there exists a compact set $K\subset  M$ such that $M\setminus K$ is a disjoint union of Euclidean ends such that in the associated coordinate charts,
\begin{align}
    g_{ij}(x)&=\delta_{ij}+O_2(|x|^{-q})\\
    k_{ij}&= O_1(|x|^{-q-1})
\end{align}
for some $q>\frac{n-2}{2}$, and also $(\mu,J)\in L^1(M)$. We refer to this $q$ as the asymptotic decay rate of $(g,k)$. 

In this case, the ADM energy-momentum $(E,P)$ is well-defined. We refer the reader to \cite{Lee} for details and references. 
\end{defn}

Throughout most of this paper, we will use $\pi$ in place of $k$, as described in the introduction. Given a fixed manifold $M$, we define the \emph{constraint map} $\Phi$ by 
\[ \Phi(g,\pi)=(2\mu ,J),\]
for any initial data $(g,\pi)$ on $M$.

Given a hypersurface $\Sigma$ with unit normal $\nu$ in an initial data set $(M, g, k)$, we define the \emph{outer and inner null expansions} $\theta^+_\Sigma$ and $\theta^-_\Sigma$, respectively, with respect to $(g,\pi)$ by 
 \[\theta^\pm_\Sigma=\pm H_\Sigma+\tr_{\Sigma} k,\]
 where $H_\Sigma$ is the mean curvature of $\partial M$ with respect to $g$ and $\nu$, and 
 \[\tr_{\Sigma} k=(g^{ij}-\nu^i\nu^j)k_{ij}=-\pi^{ij}\nu_i\nu_j\]
 is the trace of $k$ over $T\Sigma$. In the case when $(M,g,k)$ sits inside a spacetime, $\theta_\Sigma^\pm$ can be interpreted in terms of Lorentzian geometry, but we shall not need this viewpoint here. In this paper we will always choose $\Sigma$ to be $\partial M$, and we choose $\nu$ to be the unit normal pointing into $M$. We will want to prescribe either  $\theta_{\partial M}^+$ or $\theta_{\partial M}^-$ on each boundary component, so we make the following definition.

 \begin{defn}\label{boundary}
 Let $M$ be a fixed manifold with boundary, and let $\partial^+ M$ and  $\partial^- M$ designate unions of components of $\partial M$ such that $\partial M=\partial^+ M \cup \partial^- M$.
 Given initial data $(g,\pi)$ on $M$, define $\Theta(g,\pi)$ to be the function $\partial M$ that is equal to $\theta_{\partial M}^\pm$ on $\partial^\pm M$ with respect to the data $(g,\pi)$ and the normal pointing into $M$.
\end{defn}

For PDE purposes, it is convenient to slightly enlarge the space of data sets under consideration. We will consider initial data $(g,\pi)$, where $g- \overline{g}\in W^{2,p}_{-q}(T^*M \odot T^* M)$ and $\pi\in W^{1,p}_{-q-1}(TM\odot TM)$, where $p>n$, $\frac{n-2}{2}<q<n-2$, and $q$ is less than the decay rate in Definition \ref{AFdata}. Note that such a pair $(g,\pi)$ need not satisfy our definition of asymptotic flatness, and in particular, need not have well-defined ADM energy-momentum. We define 
\begin{equation}\label{eq:definesD}
    \mathcal D:=\left(\overline{g} + W^{2,p}_{-q}(T^*M \odot T^* M) \right)\times W^{1,p}_{-q-1}(TM\odot TM),
\end{equation}
so that $\mathcal D$ is a (affine) Banach space of initial data sets. Note that the tangent space of $\mathcal D$ at $(g,\pi)$, $T_{(g,\pi)}\mathcal D$, can be identified with $W^{2,p}_{-q}(T^*M\odot T^*M)\times W^{1,p}_{-q-1}(TM\odot TM)$.

 \begin{lem}\label{theta_regularity}
Let  $p>n$ and $\frac{n-2}{2}<q<n-2$. On a fixed asymptotically flat manifold $M^n$ with compact boundary decomposed as in Definition \ref{boundary}, the descriptions of $\Phi$ and $\Theta$ given above define a smooth map of Banach spaces
 \[(\Phi, \Theta):\mathcal D\to \mathcal L\times W^{1-\frac 1p,p}(\partial M),\]
 where 
 \begin{equation}\label{eq:definesL}
      \mathcal L:=L^p_{-q-2}(M)\times L^p_{-q-2}(TM),
  \end{equation}
and $W^{1-\frac 1p,p}(\partial M)$ is a fractional Sobolev space on $\partial M$.  (See, for example, \cite[Section 1.4]{Grisvard}.)
 \end{lem}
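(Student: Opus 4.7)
My plan is to unwind $\Phi$ and $\Theta$ in local coordinates as polynomial and rational expressions in $g$, $g^{-1}$, $\pi$, and their derivatives, and then check that each elementary operation is smooth between the appropriate Banach spaces. Since composition of smooth maps is smooth, this will give the result.

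To handle $\Phi$, I would observe that $R_g$, $|k|_g^2$, $(\tr_g k)^2$, $\Div_g k$, and $\nabla^i(\tr_g k)$ are all polynomial in $g$, $g^{-1}$, $\partial g$, $\partial^2 g$, $\pi$, and $\partial \pi$. Differentiation is bounded linear $W^{2,p}_{-q} \to W^{1,p}_{-q-1} \to L^p_{-q-2}$ and $W^{1,p}_{-q-1} \to L^p_{-q-2}$, so all derivative factors land in spaces whose weights sum correctly. Since $p>n$, the weighted Sobolev embedding places $W^{2,p}_{-q}$ into $C^0$ and makes $\overline g + W^{2,p}_{-q}$ behave like a Banach algebra against bounded factors; the standard weighted multiplication lemmas (e.g.\ in \cite{Lee}) then confirm that products with $g^{-1}$ factors preserve the target weight $L^p_{-q-2}$. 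Inversion $g\mapsto g^{-1}$ is smooth on the open cone of positive definite metrics in $\overline g + W^{2,p}_{-q}$, since the cofactor formula is polynomial and $\det g$ is bounded away from zero by the $C^0$ embedding. Composing these smooth pieces, $\Phi$ takes values in $\mathcal L$ and depends smoothly on $(g,\pi)$.

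For $\Theta$ I would work in a tubular neighborhood $U$ of $\partial M$. The inward unit normal $\nu$ to $\partial M$ with respect to $g$ depends smoothly and algebraically on $g$ together with a fixed smooth defining function of $\partial M$, and the second fundamental form $A$ of $\partial M$ has a local formula involving Christoffel symbols of $g$ and algebraic expressions in $g$, $g^{-1}$, and $\nu$. Hence $H_{\partial M} = g^{ij}A_{ij}$ depends smoothly on $g$ with values in $W^{1,p}(U)$. The trace theorem $W^{1,p}(U) \to W^{1-\frac{1}{p},p}(\partial M)$ (see \cite{Grisvard}) yields $H_{\partial M}|_{\partial M}\in W^{1-\frac{1}{p},p}(\partial M)$, and similarly applied to $\pi \in W^{1,p}_{-q-1}$ gives $\tr_{\partial M} k \in W^{1-\frac{1}{p},p}(\partial M)$. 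Because $p>n$ forces $(1-\tfrac{1}{p})p > n-1$, the space $W^{1-\frac{1}{p},p}(\partial M)$ embeds continuously into $C^0(\partial M)$ and is a Banach algebra, so the polynomial combination $\Theta = \pm H_{\partial M} + \tr_{\partial M} k$ on each component of $\partial M$ depends smoothly on $(g,\pi)$ with values in $W^{1-\frac{1}{p},p}(\partial M)$.

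I expect no single deep obstacle: the argument is essentially bookkeeping built on weighted Sobolev multiplication, smoothness of metric inversion on the positive cone, the trace theorem, and the algebra property of $W^{1-\frac{1}{p},p}(\partial M)$. The most delicate point is checking that first-derivative expressions in $g$, which are only $W^{1,p}$ in the interior and fail to be continuous in general, have traces of the correct fractional regularity and that all subsequent boundary products remain in the target space; both facts rest on the hypothesis $p>n$.
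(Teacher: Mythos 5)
Your argument is essentially the paper's: the paper likewise writes the unit normal, the mean curvature $H_g$, and $-\pi_{ij}\nu^i\nu^j$ near $\partial M$ as $W^{1,p}$-valued smooth functions of $(g,\pi)$ (using that $W^{1,p}$ is a Banach algebra when $p>n$) and then composes with the trace operator into $W^{1-\frac1p,p}(\partial M)$, while treating the claim for $\Phi$ as routine. The only cosmetic difference is that you trace $H$ and $\tr_{\partial M}k$ separately and recombine on the boundary (invoking the algebra property of $W^{1-\frac1p,p}(\partial M)$, which is not actually needed since $\Theta$ is a sum), whereas the paper traces the single combined $W^{1,p}(U)$ quantity once.
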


  \begin{proof}
The claim about $\Phi$ is standard and easy to verify, so we focus on the map $\Theta$. We can realize $\partial M$ as a level set of a smooth function $f$, which has no critical points in a small neighborhood $U$ of $\partial M$.  Then the formula 
 \[\nu^i:=g^{ij}\frac{\partial_i f}{|\nabla f|_g}\]
 defines a vector field on $U$ which is the unit normal to the level sets of $f$ in $U$, and it is has $W^{2,p}$ regularity.
Next, the formula
 \[H_g:=(g^{ij}-\nu^j\nu^j)\frac{1}{|\nabla f|_g}(\partial_{ij}f-\Gamma^k_{ij}\partial_k f),\]
 defines a function on $U$ which is equal to the mean curvature of the level sets of $f$ in $U$. We can also see that $H_g$ has $W^{1,p}$ regularity since $\Gamma^k_{ij}\in W^{1,p}$ and $W^{1,p}$ is a Banach algebra. Similary, the quantity $-\pi_{ij}\nu^i\nu^j$ is a $W^{1,p}$ function on $U$. More precisely, we observe that we have a bounded map from
 from $(g,\pi)\in \mathcal D$ to $\pm H_g -\pi_{ij}\nu^i\nu^j \in W^{1,p}(U)$. The result follows from viewing $\Theta$ as the composition of this map with the usual bounded trace operator from $W^{1,p}(U)$ to $W^{1-\frac 1p,p}(\partial M)$.
 \end{proof}
 
 The proof above made use of a trace theorem. Later on, we will need the following \emph{sharp trace theorem}.
 
 \begin{lem}\label{trace}
 Let $(M^n,\overline g)$ be as above and let $g\in \overline g+ W^{2,p}_{-q}(T^*M\odot T^*M)$. Then, for any $s\in\mathbb R$, the weighted Sobolev space $W^{2,p}_s(M)$ enjoys a bounded trace operator 
 \[T_2^g:W^{2,p}_s(M)\to W^{2-\frac 1p,p}(\partial M)\times W^{1-\frac 1p,p}(\partial M)\]
 which is the unique extension of 
 \[u\longmapsto \left(u|_{\partial M},\left.\frac{\partial u}{\partial \nu_g}\right|_{\partial M}\right)\]
 for $u\in C^2( M)\cap W^{2,p}_s(M)$. The mapping $T_2^g$ is surjective and admits a bounded right inverse. The operator norms of $T_2^g$ and its right inverse depend only on $\|g-\overline g\|_{W^{2,p}_{-q}}$. 
 \end{lem}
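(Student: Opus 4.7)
The plan is to reduce to the standard sharp trace theorem on a bounded neighborhood of $\partial M$, then handle the $g$-normal derivative as a bounded perturbation of the $\overline g$-normal derivative. Since $\partial M$ is compact, I first fix a bounded neighborhood $U$ of $\partial M$ in $M$ and observe that the weight $\langle x \rangle^s$ is equivalent to $1$ on $U$. Using a cutoff, every $u \in W^{2,p}_s(M)$ can be split into a piece supported in $U$ (equivalent to an element of the unweighted $W^{2,p}(U)$) and a piece supported away from $\partial M$ (trace zero). The claim therefore reduces to showing that the analogous trace map is a bounded surjection $W^{2,p}(U) \to W^{2-\frac 1p,p}(\partial M) \times W^{1-\frac 1p,p}(\partial M)$ with bounded right inverse. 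When the normal direction is $\nu_{\overline g}$, this is the classical sharp trace theorem of \cite[\S 1.5]{Grisvard} applied to the compact domain $U$, whose boundary is smooth.

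Next I pass from $\nu_{\overline g}$ to $\nu_g$. As in the proof of Lemma~\ref{theta_regularity}, the $g$-unit inward normal has components $\nu_g^i = g^{ij}\partial_j f/|\nabla f|_g$ for a boundary-defining function $f$, and these lie in $W^{2,p}(U)$ with norm depending only on $\|g-\overline g\|_{W^{2,p}_{-q}}$. I decompose $\nu_g = a\,\nu_{\overline g} + T$ where $a = \overline g(\nu_g, \nu_{\overline g})$ is a scalar and $T$ is $\overline g$-tangent to $\partial M$; the coefficients of both, restricted to $\partial M$, lie in $W^{2-\frac 1p,p}(\partial M)$ with norm controlled by $\|g-\overline g\|_{W^{2,p}_{-q}}$. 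The chain rule then gives, on $\partial M$,
\[\partial_{\nu_g} u\big|_{\partial M} = a\,\partial_{\nu_{\overline g}} u\big|_{\partial M} + T(u|_{\partial M}).\]
Boundedness of $T_2^g$ follows by combining this identity with the $\overline g$-trace theorem, using that $W^{1-\frac 1p,p}(\partial M)$ is a Banach algebra: the hypothesis $p > n$ gives $(1-1/p)p = p-1 > n - 1 = \dim\partial M$, which is the standard threshold, so multiplication by $a$ and by the $W^{2-\frac 1p,p}$ coefficients of $T$ are bounded endomorphisms of $W^{1-\frac 1p,p}(\partial M)$.

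For surjectivity and the right inverse, given target data $(\phi_0, \psi) \in W^{2-\frac 1p,p}(\partial M) \times W^{1-\frac 1p,p}(\partial M)$, I set $\phi_1 := a^{-1}(\psi - T(\phi_0)) \in W^{1-\frac 1p,p}(\partial M)$ and apply the standard right inverse associated to $\nu_{\overline g}$ to produce $u \in W^{2,p}(U)$ with $u|_{\partial M} = \phi_0$ and $\partial_{\nu_{\overline g}} u|_{\partial M} = \phi_1$; by construction, $\partial_{\nu_g} u|_{\partial M} = \psi$. The main obstacle is purely quantitative: one must control $\|a^{-1}\|_{W^{2-\frac 1p,p}(\partial M)}$ in terms of $\|g-\overline g\|_{W^{2,p}_{-q}}$. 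Pointwise positivity of $a$ is automatic from the geometric fact that both normals point into $M$, and the uniform lower bound over the compact $\partial M$ follows by feeding the bound on $\|\nu_g - \nu_{\overline g}\|_{C^0(\partial M)}$ coming from the continuous embedding $W^{2,p} \hookrightarrow C^{0}$ (valid since $p > n$) into the explicit formula for $a$; the reciprocal then stays in $W^{2-\frac 1p,p}$ by the Banach algebra property, completing the construction.
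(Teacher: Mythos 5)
Your proof is correct and follows the same route as the paper's (terse) argument: localize to a bounded neighborhood $U$ of $\partial M$ by cutoff, invoke Grisvard's sharp trace theorem on $U$, and extend back by zero. What you add — and what the paper glosses over — is the passage from the $\overline g$-normal to the $g$-normal, since Grisvard's theorem is stated for the Euclidean normal derivative. Your decomposition $\nu_g = a\,\nu_{\overline g} + T$ with $T$ tangent to $\partial M$, combined with the Banach algebra property of $W^{1-\frac 1p,p}(\partial M)$ (which holds since $(1-\tfrac 1p)p = p-1 > n-1$), and the surjectivity argument via $\phi_1 = a^{-1}(\psi - T\phi_0)$, are all correct. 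One small remark: to obtain a \emph{quantitative} lower bound on $a$ in terms of $\|g-\overline g\|_{W^{2,p}_{-q}}$ alone, the cleanest route is the explicit identity $a = |\nabla^g f|_g / |\nabla^{\overline g} f|_{\overline g}$ for the boundary-defining function $f$, which bounds $a$ below by the square root of the smallest eigenvalue of $g^{-1}\overline g$; the phrasing in terms of a bound on $\|\nu_g - \nu_{\overline g}\|_{C^0}$ only says the normals are at bounded distance, which by itself does not preclude $a$ being small, and the compactness argument gives positivity but not the stated uniform dependence. This is a presentational point rather than a gap — the conclusion and the method are right.
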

 
 We emphasize that the normal vector field $\nu_g$ is the one corresponding to the metric $g$. 
 
 \begin{proof}
The existence and properties of $T_2^{g}$ are easily reduced to the case of bounded domains \cite[Theorem 1.5.1.2]{Grisvard} by means of cutoff functions. In particular, we may take elements in the image of the right inverse to be supported in a neighborhood of $\partial M$.
 \end{proof}

 \subsection{``Conformal" initial data sets} Conformal transformations play a special role in the study of mass and the Riemannian positive mass theorem. The following notion of conformal transformations of initial data sets plays a crucial role in the density theorem and the positive mass theorem \cite{CorvinoSchoen, EHLS}. 
  
 Let 
 \begin{equation}\label{eq:definesC}
 \mathcal C=\left(1+W^{2,p}_{-q}(M)\right)\times W^{2,p}_{-q}(TM)
 \end{equation}
 denote the (affine) Banach  space of conformal deformations. Note that the tangent space of $\mathcal C$ at $(1,0)$, $T_{(1,0)}\mathcal C$, can be identified with $W^{2,p}_{-q}(M)\times W^{2,p}_{-q}(TM)$. For $(g,\pi)\in \mathcal D$ fixed and $(u,Y)\in\mathcal C$, we define 
 \begin{align*}
\tilde g&=u^s g\\
\tilde \pi &= u^{-\frac 32 s}(\pi+\mathfrak L_g Y),
 \end{align*}
 where $s=\frac{4}{n-2}$ is the conformal exponent and $\mathfrak L_g Y$ was defined in the introduction.
 We denote
 \begin{align*}
 \Psi_{(g,\pi)}:\mathcal C&\to \mathcal D \\
  (u,Y)&\mapsto (\tilde g,\tilde\pi).
 \end{align*}

 \begin{defn}\label{harmasymp}
Let $(M^n,g,\pi)$ be an asymptotically flat initial data set. We say that $(M,g,\pi)$ has \emph{harmonic asymptotics} in a particular end if in the asymptotically flat coordinates, the initial data takes the form 
\[(g,\pi)=\Psi_{(\overline g,0)}(u,Y)\]
outside a compact set, where $u$ and $Y$ are a function and vector field pair satisfying 
\begin{align}
    u(x)&=1+a|x|^{2-n}+O_{2,\alpha}(|x|^{1-n})\label{uexp}\\
    Y^i(x)&=b_i|x|^{2-n}+O_{2,\alpha}(|x|^{1-n})\label{Yexp},
\end{align}
for some $\alpha\in(0,1)$. When the initial data set is of this form, the ADM energy-momentum has a particularly simple expression: $E=2a$ and $P_i=-\frac{n-2}{n-1}b_i$. 
\end{defn}
 
 Initial data sets in the image of $\Psi_{(\overline g,0)}$ have harmonic asymptotics if the constraints decay quickly enough:
 
 \begin{lem}[{\cite[Proposition 24]{EHLS}}]\label{EHLS}
 Suppose there exist $(u,Y)\in\mathcal C$ such that $(g,\pi)=\Psi_{(\overline g,0)}(u,Y)$ outside a compact set. If $(\mu,J)\in C^{0,\alpha}_{-n-1-\delta}$ for some $\delta>0$, then $u$ and $Y$ admit the expansions \eqref{uexp} and \eqref{Yexp}. Hence $(g,\pi)$ has harmonic asymptotics. 
 \end{lem}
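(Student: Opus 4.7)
The plan is to use the conformal structure to convert the constraint equations into a pair of semilinear elliptic equations on $\mathbb{R}^n$ minus a ball, then invoke standard asymptotic expansion theory for the Laplacian with fast-decaying source. Concretely, since $(g,\pi)=\Psi_{(\overline g,0)}(u,Y)$ outside a compact set, we have $g_{ij}=u^{\frac{4}{n-2}}\delta_{ij}$ and $\pi^{ij}=u^{-\frac{2(n+2)}{n-2}}\mathfrak L_\delta Y^{ij}$ on that end. The classical Yamabe identity gives
\[
R_g=-\tfrac{4(n-1)}{n-2}\,u^{-\frac{n+2}{n-2}}\Delta u,
\]
where $\Delta$ is the Euclidean Laplacian, and a direct computation converting $\pi$ to $k$ yields an expression for $|k|_g^2-(\tr_gk)^2$ that is quadratic in $u^{-\frac{2(n+2)}{n-2}}\mathfrak L_\delta Y$. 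Substituting into $2\mu=R_g-|k|_g^2+(\tr_gk)^2$ produces an equation of the form
\[
\Delta u \;=\; F_1(u,Y,\nabla Y,\mu),
\]
and the conformal covariance of the vector Laplacian $\mathrm{div}_\delta\mathfrak L_\delta$ turns the momentum constraint $J^i=(\mathrm{div}_g\pi)^i$ into
\[
(\mathrm{div}_\delta \mathfrak L_\delta Y)^i \;=\; F_2(u,\nabla u,Y,\nabla Y,J).
\]

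Next I would verify that the right-hand sides $F_1,F_2$ lie in $C^{0,\alpha}_{-n-1-\delta'}$ for some $\delta'>0$. The explicitly linear-in-$(\mu,J)$ parts satisfy this by hypothesis. The remaining nonlinear parts are quadratic (or higher) in $\mathfrak L_\delta Y$ and $\nabla u$, each of which lies in $C^{0,\alpha}_{-q-1}$ thanks to $(u-1,Y)\in\mathcal C$ and Sobolev embedding ($p>n$); hence these pieces decay at rate $|x|^{-2(q+1)}$, which exceeds $n+1$ because $q>\tfrac{n-2}{2}$. Then I would apply the standard expansion lemma for the Euclidean Laplacian (as in Bartnik's paper, or Meyers): if $\Delta v=f$ outside a compact set with $f\in C^{0,\alpha}_{-n-1-\delta'}$ and $v\to 0$ at infinity, then $v=a|x|^{2-n}+O_{2,\alpha}(|x|^{1-n})$ for some constant $a$. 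Applied to $v=u-1$ this gives \eqref{uexp}; applied componentwise to $Y$ using that $\mathrm{div}_\delta\mathfrak L_\delta$ has the same principal symbol as the vector Laplacian (modulo a first-order perturbation absorbable into the source via bootstrapping), this gives \eqref{Yexp}.

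The main technical point will be the bootstrap: the a priori assumption is only $(u-1,Y)\in W^{2,p}_{-q}$, so at first pass one merely gets $u-1=O(|x|^{-q})$. Upgrading to the sharp $|x|^{2-n}$ leading term requires iterating — one first establishes an intermediate $|x|^{-q}\log$-type expansion via a Fourier-multiplier or single-layer potential representation, notes that the nonlinear source then decays faster, and reapplies the expansion theorem until the exponent stabilizes at $2-n$, which is the fundamental-solution rate. Once the leading term is isolated, Schauder estimates applied to the remainder equation yield the $O_{2,\alpha}(|x|^{1-n})$ control. Finally, the identification $E=2a$, $P_i=-\tfrac{n-2}{n-1}b_i$ is a routine computation from the ADM integrals using the expansions of $g$ and $\pi$.
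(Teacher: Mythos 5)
Since the paper simply cites \cite[Proposition 24]{EHLS} for this lemma and does not reproduce a proof, your task was to reconstruct the argument, and your outline does capture the correct standard approach: rewrite the constraint equations in conformal form to expose $\Delta u$ and $\Div_\delta\mathfrak L_\delta Y$ (the latter being exactly the Euclidean vector Laplacian since $\Div_\delta\mathfrak L_\delta Y^i=\Delta Y^i+\partial_j\partial^iY^j-\partial^i\partial_kY^k=\Delta Y^i$), check the source decays faster than $|x|^{-n}$, and invoke the expansion theory for $\Delta$ on an exterior domain, bootstrapping to isolate the $|x|^{2-n}$ term. This matches the strategy of EHLS.

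Two points need fixing. First, a computational slip: with the conventions of this paper ($s=\frac{4}{n-2}$), the conformal formula is $\pi^{ij}=u^{-\frac{3}{2}s}\mathfrak L_\delta Y^{ij}=u^{-\frac{6}{n-2}}\mathfrak L_\delta Y^{ij}$, not $u^{-\frac{2(n+2)}{n-2}}$. Second, and more substantively, your claim that the quadratic pieces ``decay at rate $|x|^{-2(q+1)}$, which exceeds $n+1$ because $q>\tfrac{n-2}{2}$'' is wrong: $q>\tfrac{n-2}{2}$ only gives $2(q+1)>n$, not $>n+1$. This is precisely why the bootstrap you describe in the next paragraph cannot be dispensed with — one first obtains $u-1=a|x|^{2-n}+O(|x|^{-2q})$ with $2q$ possibly less than $n-1$, then feeds the improved decay of $\nabla u$ and $\nabla Y$ back into the quadratic source, repeating until the remainder exponent passes $n-1$. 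You would do well to present the bootstrap as the main content rather than as a technicality, and to drop the premature overclaim about the source decay; log terms are also not actually needed here, since the weights can be kept non-exceptional throughout the iteration.
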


Next we define
\begin{equation}\label{definesP}
\mathcal P:=(T,\Upsilon):=(\Phi,\Theta)\circ\Psi_{(g,\pi)}:\mathcal C\to \mathcal L\times W^{1-\frac 1p,p}(\partial M).
\end{equation}

In the following, we let $(\mu,J,\theta)$ be the value of $(\Phi,\Theta)$ on the fixed data set $(g,\pi)$. 
 
 \begin{prop}
 The map $\mathcal P$ is a smooth map of Banach spaces and is explicitly given by \begin{align}
     \label{T} T(u,Y)&=\left(u^{-s}\left[\frac{L_gu}{ u}+\tfrac{1}{n-1}(\tr_g\pi+\tr_g\mathfrak L_gY)^2-(|\pi|^2_g+2\langle \mathfrak L_gY,\pi\rangle +|\mathfrak L_gY|_g^2)\right]\right.,\\
     &\quad \left.u^{-\frac 32 s}\left[(\Div_g\mathfrak L_g Y+\Div_g\pi)^i+\tfrac{s(n-1)}{2}(\pi+\mathfrak L_gY)^{ij}\frac{\nabla_ju}{u}-\tfrac s2 \tr_g(\pi+\mathfrak L_gY)g^{ij}\frac{\nabla_j u}{u}\right]\right)\nonumber\\
     \label{Upsilon}\Upsilon(u,Y)&=u^{-\frac s2}\left(\theta \pm \tfrac {s(n-1)}{2}\frac{\partial}{\partial \nu}(\log u)+\Div_{\partial M} Y^\top+H\langle Y,\nu\rangle -\langle \nabla_\nu Y^\perp,\nu\rangle +\langle Y^\top,\nabla_\nu\nu\rangle \right),
 \end{align} 
 where the $\pm$ depends on whether the point lies in $\partial^\pm M$.
 Here $\nu$ is any extension of the $g$-unit normal vector field of $\partial M$ (pointing into $M$), $Y^\perp=\langle Y,\nu\rangle\nu$, and $Y^\top = Y-Y^\perp$. Note that the quantities $\nabla_\nu Y^\perp$ and $\nabla_\nu\nu$ depend on the particular extension chosen, but $-\langle \nabla_\nu Y^\perp,\nu\rangle +\langle Y^\top,\nabla_\nu\nu\rangle $ is an invariant quantity. 
The linearization of $\mathcal P$ at $(1,0)$ is given by 
 \[D\mathcal P_{(1,0)}(v,Z)=(DT|_{(1,0)},D\Upsilon|_{(1,0)})(v,Z),\]
 where 
 \begin{align}
 DT|_{(1,0)}(v,Z)&=\Big(-s(n-1)\Delta_g v+\tfrac{2}{n-1}(\tr_g\pi)(\Div_gZ)-4 \,\pi\cdot\nabla_gZ-2s\mu v,\\
 &\quad\quad\left. (\Div_g \mathfrak L_gZ)^i+\tfrac{s(n-1)}{2} \pi^{ij}\nabla_j v-\tfrac s2(\tr_g\pi)g^{ij}\nabla_jv-\tfrac 32sJ^iv\right),\nonumber\\
D\Upsilon|_{(1,0)}(v,Z)&=-\tfrac s2 \theta v \pm\tfrac{s(n-1)}{2} \frac{\partial v}{\partial \nu}+\Div_{{\partial M}} Z^\top+H\langle Z,\nu\rangle -\langle \nabla_\nu Z^\perp,\nu\rangle +\langle Z^\top,\nabla_\nu\nu\rangle.
 \end{align}
 \end{prop}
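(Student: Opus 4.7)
The plan is to treat the three assertions separately: smoothness of $\mathcal{P}$, the explicit formulas \eqref{T} and \eqref{Upsilon}, and the linearizations at $(1,0)$.

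For smoothness, I would first verify that $\Psi_{(g,\pi)} : \mathcal{C} \to \mathcal{D}$ is smooth. Since $p > n$, the Sobolev embedding $W^{2,p}_{-q} \hookrightarrow C^0_{-q}$ implies any $u \in 1 + W^{2,p}_{-q}$ is continuous with $u \to 1$ at infinity, so $u$ is bounded away from $0$. Expanding $(1+w)^s = 1 + sw + \binom{s}{2}w^2 + \cdots$ for $w \in W^{2,p}_{-q}$ small, and using that products of $W^{2,p}_{-q}$ functions land in a subspace of $W^{2,p}_{-q}$ with improved decay, shows $u \mapsto u^s$ and $u \mapsto u^{-3s/2}$ define smooth maps from $1 + W^{2,p}_{-q}$ into itself. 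Combined with the boundedness of $Y \mapsto \mathfrak{L}_g Y : W^{2,p}_{-q}(TM) \to W^{1,p}_{-q-1}(TM \odot TM)$, this gives smoothness of $\Psi_{(g,\pi)}$. Composing with $(\Phi,\Theta)$, which is smooth by Lemma~\ref{theta_regularity}, yields the smoothness of $\mathcal{P}$.

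For the explicit formula for $T$, I would substitute $\tilde g = u^s g$ and $\tilde \pi = u^{-3s/2}(\pi+\mathfrak{L}_g Y)$ into $2\tilde\mu = R_{\tilde g} - |\tilde k|^2_{\tilde g} + (\tr_{\tilde g}\tilde k)^2$ and $\tilde J^i = (\Div_{\tilde g}\tilde\pi)^i$, using the conformal transformation law $R_{u^s g} = u^{-s-1}(R_g u - s(n-1)\Delta_g u) = u^{-s-1} L_g u$ for the scalar curvature, together with the standard identities for contractions and divergences of $(2,0)$-tensors under conformal changes. Re-expressing $\tilde k$ in terms of $\tilde\pi$ and collecting powers of $u$ then gives \eqref{T}; this computation already appears in \cite{CorvinoSchoen, EHLS} and transfers directly. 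For \eqref{Upsilon}, the conformal law for the mean curvature reads $H_{\tilde g} = u^{-s/2}\bigl(H_g + \tfrac{s(n-1)}{2}\partial_\nu \log u\bigr)$, while the unit normal satisfies $\tilde \nu^i = u^{-s/2}\nu^i$ and hence $\tilde\nu_i = u^{s/2}\nu_i$. Substituting into $\tilde\theta^\pm = \pm \tilde H - \tilde\pi^{ij}\tilde\nu_i \tilde\nu_j$ factors out the overall $u^{-s/2}$ and leaves, inside the bracket, the expression $\theta \pm \tfrac{s(n-1)}{2}\partial_\nu\log u - (\mathfrak{L}_g Y)^{ij}\nu_i \nu_j$. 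The final step is to rewrite $(\mathfrak{L}_g Y)^{ij}\nu_i\nu_j = 2\langle\nabla_\nu Y,\nu\rangle - \Div_g Y$, apply the tangential/normal decomposition $\Div_g Y = \Div_{\partial M} Y^\top + H\langle Y,\nu\rangle + \langle\nabla_\nu Y,\nu\rangle$, and split $Y = Y^\top + Y^\perp$, using $\langle \nabla_\nu Y^\top,\nu\rangle = -\langle Y^\top,\nabla_\nu\nu\rangle$ (from $\langle Y^\top,\nu\rangle \equiv 0$ on $\partial M$) to recover exactly \eqref{Upsilon}.

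The linearization at $(1,0)$ is then obtained by the chain rule. From the formula for $\Psi_{(g,\pi)}$ one reads off $D\Psi_{(g,\pi)}|_{(1,0)}(v,Z) = (svg,\, -\tfrac{3s}{2}v\pi + \mathfrak{L}_g Z)$; feeding this into the classical formula for the linearization of the constraint map (see e.g.\ \cite{EHLS}) produces the stated expression for $DT|_{(1,0)}$, while $D\Upsilon|_{(1,0)}$ is most efficiently obtained by differentiating \eqref{Upsilon} directly at $(u,Y) = (1,0)$ — the factor $u^{-s/2}$ contributes $-\tfrac{s}{2}\theta v$, the linearization of $\partial_\nu\log u$ at $u=1$ is $\partial_\nu v$, and the remaining terms are already linear in $Y$. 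The only real obstacle is careful bookkeeping of signs and $u$-powers throughout; no step requires genuinely new input beyond Lemma~\ref{theta_regularity} and standard conformal transformation formulas.
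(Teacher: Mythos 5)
Your proposal is correct and follows essentially the same route as the paper: cite the known conformal-transformation formula for the constraint map to get \eqref{T}, then derive \eqref{Upsilon} from the conformal laws for the mean curvature and unit normal together with the tangential/normal splitting of $Y$ and the identities $\Div_g Y = \Div_{\partial M} Y^\top + H\langle Y,\nu\rangle + \langle\nabla_\nu Y,\nu\rangle$ and $\langle\nabla_\nu Y^\top,\nu\rangle = -\langle Y^\top,\nabla_\nu\nu\rangle$, and finally compute the linearizations by the chain rule. The only addition you make is a brief justification of the smoothness of $\Psi_{(g,\pi)}$ (via the Morrey embedding and the multiplication property of weighted Sobolev spaces), which the paper leaves implicit, and this is a welcome clarification rather than a deviation.
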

 \begin{proof}
 The formula \eqref{T} is given in the erratum for Exercise 9.7 in \cite{Lee}. To prove \eqref{Upsilon}, we first use the standard formula
 \begin{equation}\tilde H=u^{-\frac s2}\left(H+\tfrac {s(n-1)}{2}\frac{\partial}{\partial \nu}(\log u)\right).\end{equation}
 Using $\tilde \nu = u^{-\frac s2} \nu$, we compute 
 \begin{equation}
     \tilde\pi(\tilde \nu,\tilde\nu)= u^{-\frac s2}\left(\pi(\nu,\nu)+\mathfrak L_gY(\nu,\nu)\right).
 \end{equation} 
 Finally, extend $\nu$ off of $\partial M$, let $Y^\perp = \langle\nu,Y\rangle\nu$ and $Y^\top =Y-Y^\perp$. Then, on $\partial M$,
 \begin{align*}
     \mathfrak L_gY(\nu,\nu)&=2\langle\nabla_\nu Y,\nu\rangle -\Div_g Y\\
     &=\langle \nabla_\nu Y,\nu\rangle -\Div_{\partial M} Y\\
     &=-\langle  Y^\top,\nabla_\nu\nu\rangle +\langle\nabla_\nu Y^\perp,\nu\rangle -\Div_{\partial M} Y^\top -H\langle Y,\nu\rangle.
 \end{align*}
Combining these computations gives~\eqref{Upsilon}, and the linearizations are then computed in the obvious way. 
 \end{proof}
 
 Using the formulas for the linearization and the sharp trace theorem, we can prove the following crucial result. 
 
 \begin{lem}\label{boundary_surjective}
The maps \[D\Upsilon|_{(1,0)}:T_{(1,0)}\mathcal C\to W^{1-\frac 1p,p}(\partial M)\] and \[D\Theta|_{(g,\pi)}:T_{(g,\pi)}\mathcal D\to W^{1-\frac 1p,p}(\partial M)\] are surjective and their kernels split. 
\end{lem}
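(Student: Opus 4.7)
The proof should exploit the fact that the formula for $D\Upsilon|_{(1,0)}(v,Z)$ contains the normal derivative term $\pm\tfrac{s(n-1)}{2}\frac{\partial v}{\partial\nu}$, so that the sharp trace theorem (Lemma~\ref{trace}) immediately produces a bounded right inverse. Concretely, the plan is to construct, for any target $\phi\in W^{1-\frac 1p,p}(\partial M)$, a preimage $(v,0)\in T_{(1,0)}\mathcal{C}$ with $Z=0$, $v|_{\partial M}=0$, and the normal derivative prescribed to cancel out the remaining term. With $Z=0$ and $v|_{\partial M}=0$, the formula for $D\Upsilon|_{(1,0)}$ collapses to
\[
D\Upsilon|_{(1,0)}(v,0)=\pm\tfrac{s(n-1)}{2}\frac{\partial v}{\partial\nu}\bigg|_{\partial M},
\]
since the term $-\tfrac s2\theta v$ involves only $v|_{\partial M}$. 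So the requirement becomes that $\frac{\partial v}{\partial \nu}|_{\partial M}=\pm\tfrac{2}{s(n-1)}\phi$, where the sign is chosen according to the partition $\partial M=\partial^+ M\cup\partial^- M$.

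The sharp trace theorem (Lemma~\ref{trace}) applied to the background metric $\overline g$ provides a bounded right inverse of
\[
T_2^{\overline g}:W^{2,p}_{-q}(M)\to W^{2-\frac 1p,p}(\partial M)\times W^{1-\frac 1p,p}(\partial M),
\]
so given the prescribed boundary data $(0,\pm\tfrac{2}{s(n-1)}\phi)$ I can produce $v\in W^{2,p}_{-q}(M)$ with those traces, depending linearly and boundedly on $\phi$. Setting $R_\Upsilon(\phi):=(v,0)$ defines a bounded right inverse $R_\Upsilon:W^{1-\frac 1p,p}(\partial M)\to T_{(1,0)}\mathcal{C}$ of $D\Upsilon|_{(1,0)}$. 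This gives both surjectivity and, via the bounded projection $I - R_\Upsilon\circ D\Upsilon|_{(1,0)}$ onto $\ker D\Upsilon|_{(1,0)}$, the topological splitting $T_{(1,0)}\mathcal{C}=\ker D\Upsilon|_{(1,0)}\oplus R_\Upsilon(W^{1-\frac 1p,p}(\partial M))$.

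For $D\Theta|_{(g,\pi)}$, I would invoke the chain rule identity
\[
D\Upsilon|_{(1,0)}=D\Theta|_{(g,\pi)}\circ D\Psi_{(g,\pi)}|_{(1,0)},
\]
coming from the definition $\mathcal{P}=(\Phi,\Theta)\circ\Psi_{(g,\pi)}$. Then $R_\Theta:=D\Psi_{(g,\pi)}|_{(1,0)}\circ R_\Upsilon$ is a bounded right inverse for $D\Theta|_{(g,\pi)}$, which again yields surjectivity and the analogous topological splitting of $T_{(g,\pi)}\mathcal{D}$.

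The only substantive input is Lemma~\ref{trace}, so no real obstacle arises beyond identifying that the normal-derivative term in $D\Upsilon|_{(1,0)}$ has a nonzero constant coefficient $\tfrac{s(n-1)}{2}\neq 0$ and that the sharp trace provides independent prescription of Dirichlet and Neumann data. The argument also explains \emph{why} one should pair the boundary condition $\Theta$ with the second-order operator $T$ in the elliptic system of Theorem~\ref{DECdensity}: the normal-derivative component is exactly what makes the boundary condition compatible with trace theory at the $W^{2,p}$ level.
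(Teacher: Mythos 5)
Your argument is essentially identical to the paper's: set $Z\equiv 0$, prescribe $v|_{\partial M}=0$ so the zeroth-order term drops, and use the sharp trace theorem to prescribe the normal derivative, then push forward through $D\Psi_{(g,\pi)}|_{(1,0)}$ to handle $D\Theta$. One small inaccuracy: since the normal derivative appearing in $D\Upsilon|_{(1,0)}$ is taken with respect to the $g$-unit normal $\nu=\nu_g$, you should apply Lemma~\ref{trace} with $T_2^{g}$ rather than $T_2^{\overline g}$; otherwise prescribing $\partial v/\partial\nu_{\overline g}$ instead of $\partial v/\partial\nu_g$ introduces a nonvanishing multiplicative factor $\langle\nu_g,\nu_{\overline g}\rangle_{\overline g}$ on $\partial M$ (which is still harmless for surjectivity, but spoils the clean formula you wrote).
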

\begin{proof}
 From the formula for $D\Upsilon|_{(1,0)}$, we see that we want to solve the equation 
 \[-\tfrac s2 \theta v \pm\tfrac{s(n-1)}{2} \frac{\partial v}{\partial \nu}+\Div_{{\partial M}} Z^\top+H\langle Z,\nu\rangle -\langle \nabla_\nu Z^\perp,\nu\rangle +\langle Z^\top,\nabla_\nu\nu\rangle=f,\] 
 for any given $f\in W^{1-\frac 1p,p}(\partial M)$. We set $Z\equiv 0$, reducing this to 
  \[-\tfrac s2 \theta v \pm\tfrac{s(n-1)}{2} \frac{\partial v}{\partial \nu}=f \quad\text{on }\partial M.\]
  By the sharp trace theorem  (Lemma \ref{trace}), we can find a $v\in W^{2,p}_{-q}(M)$ such that 
  \[\left( v|_{\partial M}, \left.\frac{\partial v}{\partial \nu} \right|_{\partial M} \right)=\left(0,\frac{\pm 2f}{s(n-1)}\right)\] and 
  \[\|v\|_{W^{2,p}_{-q}(M)}\le C\|f\|_{W^{1-\frac 1p,p}(\partial M)}.\]
  Therefore, $D\Upsilon|_{(1,0)}(v,0)=f$ as desired, with an estimate, which proves $D\Upsilon|_{(1,0)}$ has a bounded right inverse. By standard functional analysis \cite[Theorem 2.12]{Brezis}, this implies that the kernel splits. 
  
  The corresponding statement for $D\Theta|_{(g,\pi)}$ follows by choosing first-order deformations 
 \[ (h,w) = D\Psi_{(g,\pi)}|_{(1,0)}(v,0) = \left( svg, -\tfrac 32 sv\pi\right).\] By the definition of $\Upsilon$ and the chain rule, 
 \[D\Theta|_{(g,\pi)}(h,w)=f\]
 and 
 \[\|(h,w)\|_{T_{(g,\pi)}\mathcal D}\le C \|v\|_{W^{2,p}_{-q}(M)}\le C\|f\|_{W^{1-\frac 1p,p}(\partial M)}.\]
  The same functional analysis argument as before completes the proof. 
\end{proof}

Unfortunately, $D\mathcal P|_{(1,0)}$ does not define an elliptic boundary value problem. This is evident from the fact that the boundary operator $D\Upsilon|_{(1,0)}$ is a scalar operator while $DT|_{(1,0)}$ describes an elliptic system of $n+1$ equations. Therefore, we introduce boundary operators describing $n+1$ equations on the boundary:
 \begin{align*}
     B_1(u,Y)&=u^{-\frac{s}{2}}\left[\theta \pm\tfrac{s(n-1)}{2}\frac{\partial}{\partial\nu}(\log u)\right]\\
     B_2(u,Y)&=Y^\top\\
     B_3(u,Y)&=H\langle Y,\nu\rangle -\langle \nabla_\nu Y^\perp,\nu\rangle +\langle Y^\top,\nabla_\nu\nu\rangle,
 \end{align*}
 where $H$ is the mean curvature of $\partial M$ with respect to $g$. Altogether, these define a map
 \begin{equation}
 B=(B_1,B_2,B_3):\mathcal C\to  W^{1-\frac 1p,p}(\partial M)\times W^{2-\frac 1p,p}(T(\partial M))\times W^{1-\frac 1p,p}(\partial M).
 \end{equation}
 Clearly, we have
 \[\Upsilon=B_1+ u^{-\frac s2}\Div_{\partial M} B_2+u^{-\frac s2}B_3\]
 on $\mathcal C$. It follows that 
 \begin{equation}
 D\Upsilon|_{(1,0)}= DB_1|_{(1,0)}+\Div_{\partial M} DB_2|_{(1,0)}+DB_3|_{(1,0)}
 \end{equation}
on $T_{(1,0)}\mathcal C$, where
\begin{align*}
   DB_1|_{(1,0)}(v,Z) &=-\tfrac s2 \theta v\pm\tfrac {s(n-1)}{2}\frac{\partial v}{\partial \nu}\\
    DB_2|_{(1,0)}(v,Z)&=Z^\top\\
   DB_3|_{(1,0)}(v,Z) &=H\langle Z,\nu\rangle -\langle \nabla_\nu Z^\perp,\nu\rangle +\langle Z^\top,\nabla_\nu\nu\rangle.
\end{align*}
For ease of reading, we will remove $|_{(1,0)}$ when there is no risk for confusion. 

\begin{prop}\label{elliptic}
$(DT|_{(1,0)},DB|_{(1,0)})$ defines an elliptic system in the following sense: 

\begin{enumerate}
 \item \label{EllipticEst}
 There exists a relatively compact set $U\subset M$ so that the elliptic estimate  
    \[\|(v,Z)\|_{W^{2,p}_{-q}}\lesssim \|DT(v,Z)\|_\mathcal L+\|DB(v,Z)\|_{ W^{1-\frac 1p,p}\times W^{2-\frac 1p,p}\times W^{1-\frac 1p,p}}+\|(v,Z)\|_{L^p(U)}\]
    holds for every $(v,Z)\in T_{(1,0)}\mathcal C$, where $\mathcal L$ and $\mathcal C$ were defined in~\eqref{eq:definesL} and~\eqref{eq:definesC}; and

    \item \label{FredholmProperty} the mapping \[(DT,DB):T_{(1,0)}\mathcal C\to \mathcal L\times  W^{1-\frac 1p,p}(\partial M)\times W^{2-\frac 1p,p}(T(\partial M))\times W^{1-\frac 1p,p}(\partial M)\]
    is Fredholm.
    
    \end{enumerate}
\end{prop}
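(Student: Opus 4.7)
The plan is to verify the Lopatinski--Shapiro complementing condition for $(DT|_{(1,0)}, DB|_{(1,0)})$ at $\partial M$ and then invoke standard weighted elliptic theory on asymptotically flat manifolds with boundary.

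First, I would extract the principal symbols. Regarding $v$ and $Z$ as Douglis--Nirenberg unknowns both of weight $2$, the interior operator $DT|_{(1,0)}$ has a block-diagonal principal symbol: on the $v$-block it is (up to sign) $s(n-1)|\xi|^2$, and on the $Z$-block it is (up to sign) $|\xi|^2 \cdot I_n$, since $\Div_g \mathfrak L_g Z = \Delta_g Z + \Ric_g(Z,\cdot)$. All $v \leftrightarrow Z$ coupling terms in $DT$ are first order, hence of lower Douglis--Nirenberg order. The boundary operator likewise decouples: $DB_1$ involves only $v$, with principal part $\pm\tfrac{s(n-1)}{2}\partial_\nu v$ (Neumann-type); $DB_2(v,Z)=Z^\top$ is a Dirichlet condition on the tangential part of $Z$; and $DB_3$ has principal part $-\partial_\nu\langle Z,\nu\rangle$ (Neumann-type on the normal component of $Z$). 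Checking L--S then reduces to classical half-line problems: freezing coefficients at a boundary point with $\nu=\partial_t$, Fourier transforming in the tangential variables at frequency $\eta\neq 0$, and retaining only bounded solutions, the equation $\partial_t^2 v = |\eta|^2 v$ has bounded solutions $v(t)=ce^{-|\eta|t}$, and $\partial_t v(0)=0$ forces $c=0$; for $Z$ each tangential component satisfies $Z^i(0)=0$, while the normal component satisfies the same Neumann condition as $v$. Hence $(DT,DB)$ is an elliptic boundary value problem in the Agmon--Douglis--Nirenberg sense.

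With ellipticity established, part~\ref{EllipticEst} assembles from three standard pieces: ADN boundary estimates in a tubular neighborhood of $\partial M$, interior Calder\'on--Zygmund estimates on compact subsets of $\Int M$, and weighted scaling estimates on each Euclidean end using that the operator is asymptotic to $(\Delta_\delta,\Delta_\delta)$ on $\R^n$ in the manner of Bartnik. A partition of unity combines these into the stated weighted estimate with a compact $L^p$ remainder on a relatively compact $U\subset M$. For the Fredholm property~\ref{FredholmProperty}, the compact remainder combined with the Rellich embedding $W^{2,p}_{-q}(U)\hookrightarrow L^p(U)$ yields finite-dimensional kernel and closed range; repeating the argument for the formal adjoint (which has the same principal symbol structure) controls the cokernel. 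The input that closes the global argument on the noncompact ends is the isomorphism $\Delta_\delta : W^{2,p}_{-q}(\R^n)\to L^p_{-q-2}(\R^n)$ in the weight range $q\in(\tfrac{n-2}{2},n-2)$, which avoids the indicial roots of the flat Laplacian.

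The principal obstacle, and the reason $B_2$ and $B_3$ were introduced in the first place, is that the single scalar operator $D\Upsilon|_{(1,0)}$ is not a complementing boundary condition for the $(n+1)$-component interior system $DT|_{(1,0)}$. The substance of the above verification is that the decomposition $\Upsilon = B_1 + u^{-s/2}\Div_{\partial M}B_2 + u^{-s/2}B_3$ produces exactly $n+1$ boundary scalars whose principal parts align with the block-diagonal interior symbol: one Neumann-type condition for the $v$-block, plus $n-1$ Dirichlet and one Neumann condition for the $Z$-block. Once this matching is in place, the rest is a routine application of weighted elliptic Fredholm theory for asymptotically flat manifolds with boundary.
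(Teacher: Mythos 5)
Your ellipticity verification for part~(1) is substantively the same as the paper's: you identify the block-diagonal Laplacian structure of $DT$ via $\nabla_j\mathfrak L_g Z^{ij}=\Delta Z^i+\Ric^i{}_jZ^j$, observe that $DB$ decouples into a Neumann condition for $v$, a Dirichlet condition for $Z^\top$, and a Neumann condition for $Z^\perp$, and invoke ADN theory plus the standard weighted/partition-of-unity machinery. Two points, however, need attention.

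First, a technical gap: your freezing-coefficients argument tacitly assumes there exist boundary-adapted coordinates in which the orthogonal splitting $Z = Z^\top + Z^\perp$ is realized by coordinate components \emph{and} in which the metric retains $C^{2,\alpha}$ regularity. This is exactly the role of almost-Fermi coordinates (Andersson--Chru\'sciel) in the paper's proof. Exact Fermi coordinates would degrade the metric to $C^{0,\alpha}$, which breaks the regularity theory; almost-Fermi coordinates preserve regularity while still giving $\nabla_\nu\nu = 0$ along $\partial M$, which is what lets you simplify $-\langle\nabla_\nu Z^\perp,\nu\rangle + \langle Z^\top,\nabla_\nu\nu\rangle$ to $-\partial_\nu\langle Z,\nu\rangle$. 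You assert the latter principal part without justifying the coordinate choice, and this needs to be patched.

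Second, and more seriously, your route to the Fredholm property~(2) differs from the paper's and contains a gap. You propose to bound the cokernel by ``repeating the argument for the formal adjoint,'' but this is exactly where the low regularity of the coefficients becomes dangerous. The coefficients of $(DT, DB)$ are only $C^{1,\alpha}$ at worst; the formal adjoint of a second-order operator has first- and zeroth-order coefficients obtained by differentiating the leading ones, so its coefficients are rougher than the original's. Moreover, the adjoint of a boundary value problem is not simply the formal adjoint of the interior operator paired with the same boundary operators: the adjoint boundary conditions must be derived from Green's identity, and ``same principal symbol structure'' does not by itself produce the needed estimate. The paper acknowledges precisely this obstacle (``if the coefficients of $(DT,DB)$ were smooth, then the Fredholm property would also follow... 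To account for the lack of smoothness... we adapt an argument of D.~Maxwell'') and sidesteps it: the elliptic estimate gives that $(DT,DB)$ is \emph{semi}-Fredholm, and one then smooths $(g,\pi)$ along a continuous path $(g_\mu,\pi_\mu)$, uses that the smoothed operators are Fredholm by classical theory, and concludes by homotopy invariance of the semi-Fredholm index. Your argument as written would need a separate justification that the adjoint problem satisfies an analogous elliptic estimate despite its rougher coefficients; the paper's index-homotopy argument is the cleaner way out and you should adopt it (or fill in the adjoint regularity details).
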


The proof is unfortunately complicated by some technicalities. In order to apply the theory of elliptic systems, we have to check that the boundary operator $DB$ is elliptic. However, we defined $B$ relative to an orthogonal splitting of $Y$ near the boundary. Hence, we must choose coordinates which respect this splitting. The most natural choice would be Fermi coordinates. However, since their construction involves solving the geodesic equation, it is well known that the resulting metric coefficients would only be $C^{0,\alpha}$ \cite{DTK}. This causes problems for the regularity theory, but thankfully exact Fermi coordinates are not needed. Instead, we use the following:

\begin{defn}[Andersson--Chru\'sciel]\label{almostFermi}
Let $g$ be a $C^{k,\alpha}$ Riemannian metric on a manifold $M$ with compact smooth boundary $\partial M$. Let $p\in \partial M$ and $U$ be a neighborhood of $p$ in $ M$. We say that coordinates $(x^1,\dotsc, x^n):U\to \Bbb R^n$ form an \emph{almost-Fermi} coordinate system at $p$ if \begin{enumerate}
\item Each $x^i$ is a $C^{k+1,\alpha}$ function on $U$ relative to the smooth structure of $M$;

\item $x^1,\dotsc,x^{n-1}$ form a coordinate system for a neighborhood of $p$ in $\partial M$ when restricted to $x^n=0$, consequently the coordinate partial derivatives $\partial_1,\dotsc,\partial_{n-1}$ are a frame for $T(\partial M)$ along $\partial M$; and

\item the ``bottom row" of metric components satisfy $g_{nn}(x)=1+O((x^n)^{k+\alpha})$ and $g_{ni}(x)=O((x^n)^{k+\alpha})$ for $i=1,\dotsc, n-1$.
\end{enumerate}
\end{defn}

The existence of almost-Fermi coordinate systems is proved in \cite[Appendix B]{AnderssonChrusciel} (where they are called ``almost Gaussian''). Note that the conclusions of (3) are proved directly there for the inverse metric, but can easily be seen to hold for the metric components themselves by Taylor expansion of the matrix inverse function. We note three more facts: 
\begin{enumerate}
\setcounter{enumi}{3}
\item The metric components are $C^{k,\alpha}$ up to the boundary;

\item the coordinate vector field $\partial_n$ agrees with the $g$-unit normal vector $\nu$ along $\partial M$; and

\item the coordinate vector field $\partial_n$ satisfies $\nabla_{\partial_n}\partial_n=0$ along $\partial M$.
\end{enumerate}

Property (4) follows directly from (1) in Definition \ref{almostFermi}. Property (5) follows from (2) and (3), because $\partial_n$ has unit length on $\partial M$ and is orthogonal to $\partial M$. Property (6) follows from the definition $\nabla_{\partial_n}\partial_n =\Gamma^k_{nn}\partial_k$ and the Christoffel symbols $\Gamma^1_{nn},\dotsc ,\Gamma^n_{nn}$ all vanish along $\partial M$ by (3). 

With this out of the way, we can prove the proposition. 

\begin{proof}[Proof of Proposition \ref{elliptic}]
We compute the boundary operator $DB$ in almost-Fermi coordinates $x^1,\dotsc, x^n$. Let the extension of $\nu$ be $\partial_n$, so that $\nabla_\nu\nu=0$ along $\partial M$.
Then 
\[\langle \nabla_\nu Z^\perp,\nu\rangle = \nabla_\nu\langle Z,\nu\rangle\] along $\partial M$.  We conclude that
\begin{align*}
   DB_1(v,Z) &=-\tfrac s2 \theta v\pm\tfrac{s(n-1)}{2} \frac{\partial v}{\partial x^n},\\
    DB_2(v,Z)&=\sum_{i=1}^{n-1}Z^i\partial_i,\\
   DB_3(v,Z) &= HZ^n- \frac{\partial Z^n}{\partial x^n}.
\end{align*}
We furthermore observe that 
\begin{equation}\label{LaplacianY}
\nabla_j \mathfrak L_g Z^{ij} =\Delta Z^i+\nabla_j\nabla^i Z^j-\nabla^i \nabla_k Z^k= \Delta Z^i+R^i{}_jZ^j,
\end{equation}
so to leading order $DT$ is diagonal and equal to the Laplacian in each component. Moreover, in these coordinates, up to leading order, $DB$ is diagonal and gives a Dirichlet boundary condition in the  $DB_2$ components, while giving a Neumann boundary condition in the $DB_1$ and $DB_3$ components. Therefore it is clear that $(DT,DB)$ is properly elliptic in $M$ and satisfies the complementary condition of Agmon--Douglis--Nirenberg on $\partial M$ \cite{ADNII}. Therefore we have elliptic boundary estimates in addition to interior estimates, which can now be combined with the asymptotic flatness assumption to obtain the global estimate~\eqref{EllipticEst} of Proposition~\ref{elliptic} in routine way. Specifically, we use a partition of unity and a scaling argument to obtain a global weighted estimate (for example, see~\cite[Theorem A.33]{Lee}), and then a cutoff argument to replace $L^p_{-q}(M)$ by $L^p(U)$ on the right-hand side of the global weighted estimate (as in~\cite[Theorem 1.10]{BartnikMass}, or see~\cite[Lemma A.41]{Lee}).

If the coefficients of $(DT, DB)$ were smooth, then the Fredholm property~\eqref{FredholmProperty} of Proposition~\ref{elliptic}  would also follow, as in \cite{LM85}, from the fact that $DB$ is an elliptic boundary condition for $DT$, which is asymptotic to the Laplacian in each component. To account for the lack of smoothness (the coefficients are $C^{1,\alpha}$ at worst), we adapt an argument of D.~Maxwell \cite{Maxwell}. Although the Fredholm property does not follow directly from~\eqref{EllipticEst}, the elliptic estimate~\eqref{EllipticEst} combined with compactness of the map $W^{2,p}_{-q}(M)\to L^p(U)$ does show that the map $(DT, DB)$ is \emph{semi-Fredholm}\footnote{A bounded linear operator $T:X\to Y$ is \emph{semi-Fredholm} if $\dim \ker T<\infty$  and $T(X)$ is closed in $Y$.} via standard arguments~\cite[Theorem 5.21]{Schechter}.

Standard smoothing arguments allow us to construct  a continuous one-parameter family of initial data sets in $(g_\mu, \pi_\mu)\in \mathcal D$ for $\mu\in [0,1]$, such that $(g_\mu,\pi_\mu)\in C^\infty \times C^\infty$ for $\mu>0$, and 
 $(g_0,\pi_0)=(g,\pi)$. Since the associated operators $(DT_\mu,DB_\mu)$ have smooth coefficients for $\mu>0$, we know that they are Fredholm on the relevant Sobolev spaces \cite{LM85}. Since the index of semi-Fredholm operators is a homotopy invariant \cite[Theorem 5.22]{Schechter}, we have $\ind(DT_0,DB_0)=\ind(DT_\mu,DB_\mu)$ for any $\mu>0$. Since the index for $\mu>0$ is finite, this implies that $(DT, DB)=(DT_0,DB_0)$ is itself Fredholm. 
\end{proof}

\section{Density theorems}

\subsection{Prescribed constraint density theorem}

Our first density theorem generalizes the vacuum density theorem of Corvino--Schoen \cite{CorvinoSchoen}. 

\begin{thm}[Density theorem for prescribed constraints]\label{prescribed2}
Let $(M^n,g,\pi)$ be a complete asymptotically flat initial data set with constraints $(\mu,J)$ and compact boundary $\partial M$ having outer null expansion $\theta$ on $\partial^+M$ and inner null expansion $\theta$ on $\partial^-M$. Let $p>n$ and $\frac{n-2}{2}<q<n-2$ be strictly less than the decay rate of $(g,\pi)$. Recall the definitions of $\mathcal L$ and $\mathcal D$ from~\eqref{eq:definesL} and~\eqref{eq:definesD}.
There exist constants $\delta>0$ and $C$ so that the following is true: 

If $( \tilde\mu,\tilde J)\in \mathcal L$, $\tilde\theta \in W^{1-\frac 1p,p}(\partial M)$, and 
$\|(\tilde\mu,\tilde J,\tilde \theta)-(\mu,J,\theta)\|_{\mathcal L\times W^{1-\frac 1p,p}}<\delta,$
then there exists an asymptotically flat initial data set $(\tilde g,\tilde \pi)\in\mathcal D$, whose constraints are $(\tilde \mu,\tilde J)$ and with outer/inner null expansion $\tilde \theta$ on $\partial^\pm M$, which satisfies
\[\|(\tilde g,\tilde \pi)-(g,\pi)\|_{\mathcal D}\le C\|(\tilde\mu,\tilde J,\tilde\theta)-(\mu,J,\theta)\|_{\mathcal L\times W^{1-\frac 1p,p}}.\]
 Furthermore, there exists $(u,Y)\in \mathcal C$ such that 
\[(\tilde g,\tilde \pi)=\Psi_{(\overline g,0)}(u,Y)\]
outside a compact set, where $(\overline g,0)$ is the flat data set on the Euclidean end. 

In particular, if $(\tilde \mu,\tilde J)$ is $C^{0,\alpha}_{-n-1-\ve}$ up to the boundary, and $\tilde\theta\in C^{1,\alpha}(\partial M)$ for some $\alpha\in (0,1)$ and $\ve>0$, then $(\tilde g,\tilde \pi)$ is $C^{2,\alpha}_{2-n}\times C^{1,\alpha}_{1-n}$ up to the boundary and has harmonic asymptotics.\footnote{In this theorem and throughout the paper, whenever we refer to H\"{o}lder spaces on $M$, we mean that they are regular up to the boundary.}
\end{thm}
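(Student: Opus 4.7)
The plan is to apply the inverse function theorem to the augmented map
\[F = (T, B_1, B_2, B_3): \mathcal C \to \mathcal L \times W^{1-\frac{1}{p},p}(\partial M) \times W^{2-\frac{1}{p},p}(T(\partial M)) \times W^{1-\frac{1}{p},p}(\partial M),\]
based at $(1,0) \in \mathcal C$. Since $B_2(1,0)=0$, $B_3(1,0)=0$ and $\Upsilon = B_1 + u^{-s/2}(\Div_{\partial M} B_2 + B_3)$, any solution of $F(u,Y) = (2\tilde\mu, \tilde J, \tilde\theta, 0, 0)$ automatically satisfies $\Upsilon(u,Y) = \tilde\theta$. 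The desired data set is then $(\tilde g,\tilde\pi) = \Psi_{(g,\pi)}(u,Y)$. To ensure the harmonic asymptotics form $\Psi_{(\overline g,0)}(u,Y)$ outside a compact set, first replace $(g,\pi)$ by a nearby cutoff $(g_\sigma,\pi_\sigma)\in \mathcal D$ equal to $(g,\pi)$ on a ball containing $\partial M$ and equal to $(\overline g,0)$ outside $B_{2\sigma}$, and run the IFT for $F$ based at $(g_\sigma,\pi_\sigma)$; the resulting $\Psi_{(g_\sigma,\pi_\sigma)}(u,Y)$ coincides with $\Psi_{(\overline g,0)}(u,Y)$ outside $B_{2\sigma}$.

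The heart of the argument is to show that $DF|_{(1,0)}$ is an isomorphism. By Proposition \ref{elliptic}, $DF|_{(1,0)}$ is Fredholm, so it suffices to check both that it has index zero and that its kernel is trivial. For the index, I would continuously deform $(g,\pi)$ through a path in $\mathcal D$ to the flat data $(\overline g,0)$ (deforming the boundary geometry accordingly, preserving the splitting $\partial M=\partial^+M\cup\partial^-M$ and the property that $(DT_t,DB_t)$ is elliptic); homotopy invariance of the semi-Fredholm index reduces the computation to the model case. At the model, $DT$ decouples at leading order into $-s(n-1)\Delta v$ and $(\Div\mathfrak L_\delta Z)^i=\Delta Z^i$, with $DB$ giving a Robin condition on $v$, Dirichlet on $Z^\top$, and Robin on $Z^n$. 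Each of these is a classical isomorphism on the relevant weighted Sobolev spaces by the Bartnik-type asymptotic theory, so the index is zero.

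For injectivity, I would combine the elliptic estimate \eqref{EllipticEst} with the decay at infinity ($q>(n-2)/2$) and a standard weighted energy / unique continuation argument: a kernel element $(v,Z)$ satisfies a homogeneous elliptic boundary value problem, and the boundary conditions $DB_1(v,Z)=0$, $DB_2(v,Z)=0$, $DB_3(v,Z)=0$ combined with asymptotic decay allow an integration-by-parts argument similar to the one used in the $\partial M = \emptyset$ isomorphism results. Once $DF|_{(1,0)}$ is shown to be an isomorphism, the IFT yields the $(u,Y)$, with the Lipschitz estimate built into the IFT, so $(\tilde g,\tilde\pi)$ satisfies the claimed bound. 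The higher-regularity statement in the last paragraph follows from elliptic boundary regularity for $(DT,DB)$ applied to the solution together with Lemma \ref{EHLS} to obtain the harmonic asymptotic expansions.

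The main obstacle is the injectivity step. At non-generic base data one might have a nontrivial kernel of $(DT,DB)$ that corresponds to a \emph{boundary-constrained} infinitesimal KID; ruling this out without any genericity assumption is the subtle point. I expect this to go through via a careful use of the ellipticity of $(DT,DB)$ together with asymptotic decay (as the weights force the kernel into a class in which the usual KID obstructions disappear for $\pi$ and $v$ falling off like $|x|^{-q}$ with $q > (n-2)/2$), but if necessary one can first replace $(g,\pi)$ by an arbitrarily close smooth perturbation to eliminate any kernel, and then pass to the limit.
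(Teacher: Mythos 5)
Your overall outline (apply the inverse function theorem to a map built from $T$ and $B=(B_1,B_2,B_3)$, cut off the base data so the answer is conformally-Euclidean near infinity, then upgrade regularity via elliptic boundary estimates) tracks the paper's strategy. The critical gap is the claim that $DF|_{(1,0)}$ is an isomorphism. The paper explicitly notes (just before Lemma~\ref{main}) that $(DT|_{(1,0)},DB|_{(1,0)})$ is \emph{not necessarily} an isomorphism, and this is the same KID-type obstruction already present in Corvino--Schoen and EHLS, not a technical nuisance that a clever argument sweeps away. Your index computation by deforming to $(\overline g,0)$ is not viable as stated: the boundary $\partial M$ is an arbitrary compact hypersurface rather than a round sphere, the boundary operators involve $\theta$ and $H$ whose signs are not controlled along a deformation, and even at a model you would face a Robin condition whose sign can be wrong (for $\theta^->0$ the Robin term $-\tfrac s2\theta v \pm \tfrac{s(n-1)}{2}\partial_\nu v$ has the unfavorable sign for coercivity, so a decaying harmonic $v$ with a nontrivial kernel is not ruled out by soft arguments). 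Your injectivity argument is therefore incomplete, and the fallback of perturbing $(g,\pi)$ to kill the kernel does not repair it: there is no generic-triviality theorem for this kernel, and even if the kernel were eliminated for a sequence of nearby data, the IFT constants would degenerate as the kernel dimension jumps in the limit, so you cannot pass to the limit.

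The paper's resolution is the Corvino--Schoen device, which avoids ever computing the index or proving injectivity. It restricts the domain to $[(1,0)+K_1]\times K_2$ where $K_1$ is a closed complement of the finite-dimensional kernel of $(DT,DB_1)$ \emph{inside} $\ker DB_2\cap\ker DB_3$ (Lemma~\ref{Fredholm1} supplies Fredholmness of this restricted operator), and $K_2\subset T_{(g,\pi)}\mathcal D$ is a finite-dimensional space of compactly supported smooth deformations chosen via the surjectivity of $(D\Phi,D\Theta)|_{(g,\pi)}$ (Proposition~\ref{prop1}) so that $D\Phi\oplus D\Theta$ applied to $K_2$ complements the (closed, finite-codimension) range. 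On this modified domain, $D\hat{\mathcal P}$ is an isomorphism by construction, irrespective of whether the original linearization had nontrivial kernel. The compact support of $K_2$ is what makes the harmonic-asymptotics conclusion go through, and the uniformity in the cutoff parameter $\lambda$ (needed for the quantitative IFT) is supplied by Lemma~\ref{AppendixA}, neither of which your proposal addresses. Also note that the regularity argument in the last paragraph is more subtle than invoking Lemma~\ref{EHLS}: the single equation $\Upsilon=\tilde\theta$ is only one boundary condition; it is the membership $(u,Y)\in K_1\subset\ker DB_2\cap\ker DB_3$ that supplies the remaining $n$ boundary conditions needed to apply Lemma~\ref{Morrey} component-by-component.
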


\begin{rk} 
If $(g,\pi)$ is $C^{k+2,\alpha}\times C^{k+1,\alpha}$ up to the boundary, $(\tilde \mu,\tilde J)$ is $C^{k,\alpha}$ up to the boundary, and $\tilde\theta$ is $C^{k+1,\alpha}$ on the boundary, then $(\tilde g,\tilde \pi)$ will be $C^{k+2,\alpha}\times C^{k+1,\alpha}$ up to the boundary.
\end{rk}

We fix $(g,\pi)$ be asymptotically flat according to Definition \ref{AFdata}, with $(\Phi,\Theta)(g,\pi)= (2\mu,J,\theta)$. We now define the operator used in the proof of Theorem \ref{prescribed2} and study its linearization. 

Let $\chi$ be a smooth nonnegative cutoff function on $\mathbb{R}^n$ equal to $1$ on $B_1$ and vanishing outside $B_2$. Define $\chi_\lambda(x)=\chi(\frac{x}{\lambda})$. For sufficiently large $\lambda$, $\chi_\lambda$ is defined by extending it to be $1$ on the connected compact subset of $M$ that strictly contains the boundary $\partial M$. Now define 
\begin{align*}
    g_\lambda&=\chi_\lambda g+(1-\chi_\lambda)\overline{g}\\
    \pi_\lambda &=\chi_\lambda \pi
\end{align*}
so that $g_\lambda =\overline{g}$ and $\pi_\lambda=0$ for $|x|\ge 2\lambda$ and $\Theta(g_\lambda,\pi_\lambda)=\Theta(g,\pi)=\theta$. It is convenient to set $(g_\infty,\pi_\infty)=(g,\pi)$.

The basic idea of the density theorem (going back to \cite{SY81ii, CorvinoSchoen, EHLS}) is to make a conformal change to $(g_\lambda,\pi_\lambda)$ in order to reimpose the ``constraint" (either prescribed $\Phi$ or modified $\Phi$) lost in the cutoff process by taking $\lambda$ large and using the inverse function theorem. However, $(DT|_{(1,0)},DB_{(1,0)})$ is not necessarily an isomorphism. This issue is also present in \cite{CorvinoSchoen, EHLS}. The solution is to change the domain of $(T,\Upsilon)$ to create an operator whose differential at $(g_\infty,\pi_\infty)$ is an isomorphism. The alteration will only introduce ``compactly supported" deformations, so that the final data set will still have harmonic asymptotics. 

\begin{lem}\label{main} Fix initial data $(g,\pi)$ as in Theorem \ref{prescribed2}. There exists a closed subspace $K_1\subset T_{(1,0)}\mathcal C$, a finite dimensional subspace $K_2\subset T_{(g,\pi)}\mathcal D$ spanned by compactly supported smooth functions, and constants $r_0>0$ and $C$ such that the following holds for all $\lambda$ sufficiently large: 

The differentials of the operators
\begin{align*}
\hat{\mathcal P}_\lambda:[(1,0)+K_1]\times K_2&\to \mathcal L\times W^{1-\frac 1p,p}(\partial M)\\
((u,Y),(h,w))&\mapsto (\Phi,\Theta)[\Psi_{(g_\lambda,\pi_\lambda)}(u,Y)+(h,w)]
\end{align*}
are isomorphisms at the point $((1,0),(0,0))$. In fact, we have 
\begin{equation}\|D \hat{\mathcal P}_\lambda|_{((1,0),(0,0))}^{-1} \|_\mathrm{op}\le  C\label{inversebound}\end{equation}
and the Lipschitz constant of $D\hat{\mathcal P}_{\lambda}$ is bounded by $C$ on $B_{r_0}((1,0),(0,0))$. 
\end{lem}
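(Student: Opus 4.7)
The plan is to first handle the limiting case $\lambda=\infty$ (where $(g_\lambda,\pi_\lambda)$ is replaced by $(g,\pi)$) by choosing $K_1,K_2$ so that $D\hat{\mathcal{P}}_\infty|_{((1,0),(0,0))}$ is a linear isomorphism, and then transfer the result to all sufficiently large finite $\lambda$ by continuity. The central obstacle is that $(DT,D\Upsilon)|_{(1,0)}$ is not Fredholm on all of $T_{(1,0)}\mathcal{C}$, because the scalar boundary operator $D\Upsilon=DB_1+\Div_{\partial M}DB_2+DB_3$ absorbs the tangential vector degrees of freedom into a single equation and thus fails the Shapiro--Lopatinski condition. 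The remedy I propose is to restrict to the closed subspace
\[K_1^0:=\{(v,Z)\in T_{(1,0)}\mathcal{C}:Z|_{\partial M}=0\}.\]
On $K_1^0$ one automatically has $DB_2=0$ and $DB_3=-\partial_\nu Z^n|_{\partial M}$, so $D\Upsilon|_{K_1^0}$ reduces to a single first-order scalar operator in $\partial_\nu v$ and $\partial_\nu Z^n$. Combined with the $n$ implicit Dirichlet conditions on $Z$, this gives the correct count of $n+1$ boundary conditions for the $(n+1)$-component Laplacian-type elliptic system $DT$. A Shapiro--Lopatinski check at nonzero Fourier mode $\xi'\in T^*\partial M$ is immediate: bounded half-line modes $\tilde v=Ae^{-|\xi'|x^n}$, $\tilde Z^i=B^ie^{-|\xi'|x^n}$ satisfying $\tilde Z^i(0)=0$ force $B^i=0$, after which the scalar condition becomes $\mp\tfrac{s(n-1)}{2}|\xi'|A=0$, yielding $A=0$. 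Running the argument of Proposition~\ref{elliptic} verbatim then delivers Fredholmness of $(DT,D\Upsilon)|_{K_1^0}$.

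Given Fredholmness, let $\mathcal{N}=\ker\bigl((DT,D\Upsilon)|_{K_1^0}\bigr)$ (finite-dimensional) and take $K_1\subset K_1^0$ to be a closed complement of $\mathcal{N}$, so that $(DT,D\Upsilon)|_{K_1}$ is injective with closed image of finite codimension in the target. I then choose $K_2\subset T_{(g,\pi)}\mathcal{D}$ to be a finite-dimensional span of smooth compactly supported sections whose images under $D(\Phi,\Theta)|_{(g,\pi)}$ form a complement of $\mathrm{image}\bigl((DT,D\Upsilon)|_{K_1}\bigr)$ in $\mathcal{L}\times W^{1-\frac{1}{p},p}(\partial M)$; the existence of such $K_2$ follows from density of smooth compactly supported sections in $T_{(g,\pi)}\mathcal{D}$, surjectivity of $D\Theta$ onto $W^{1-\frac{1}{p},p}(\partial M)$ (Lemma~\ref{boundary_surjective}) for the boundary directions, and an explicit construction of compactly supported $(h,w)$ realizing any given cokernel direction in $\mathcal{L}$, as in \cite{CorvinoSchoen,EHLS}. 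By construction, $D\hat{\mathcal{P}}_\infty|_{((1,0),(0,0))}:K_1\times K_2\to\mathcal{L}\times W^{1-\frac{1}{p},p}(\partial M)$ is then a linear isomorphism.

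For finite $\lambda$ sufficiently large, $(g_\lambda,\pi_\lambda)\to(g,\pi)$ in $\mathcal{D}$, so the coefficients of $D\hat{\mathcal{P}}_\lambda$ converge in operator norm to those of $D\hat{\mathcal{P}}_\infty$. Openness of the invertibles together with continuity of inversion gives the uniform estimate \eqref{inversebound}. The uniform Lipschitz bound for $D\hat{\mathcal{P}}_\lambda$ on $B_{r_0}((1,0),(0,0))$ reduces to a uniform $C^2$ bound on $\hat{\mathcal{P}}_\lambda$ as a map of Banach spaces, which follows from the smoothness of $\Psi$ and of $(\Phi,\Theta)$ (Lemma~\ref{theta_regularity}) together with uniform boundedness of $(g_\lambda,\pi_\lambda)$ in $\mathcal{D}$ for large $\lambda$. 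The hard part will be carefully verifying the Shapiro--Lopatinski condition for the restricted BVP on $K_1^0$ and globalizing the local Agmon--Douglis--Nirenberg estimate on the noncompact manifold via the almost-Fermi coordinates of Definition~\ref{almostFermi}, essentially repeating the endgame of the proof of Proposition~\ref{elliptic}; the rest is a standard combination of the Fredholm alternative with a perturbation-in-$\lambda$ argument.
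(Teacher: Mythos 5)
Your proposal is correct in outline, and the $\lambda$-perturbation step matches the paper (which invokes the Hessian and operator-norm bounds of Lemma~\ref{AppendixA}). However, your choice of $K_1^0$ is genuinely different from the paper's, and this is worth spelling out.

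You take $K_1^0=\{(v,Z):Z|_{\partial M}=0\}$, i.e.\ full Dirichlet on $Z$. The paper instead takes $K_1\subset\ker DB_2\cap\ker DB_3$, which in almost-Fermi coordinates is $\{(v,Z):Z^\top|_{\partial M}=0,\ HZ^n-\partial_\nu Z^n=0\}$ --- Dirichlet on the tangential part and a Robin condition on the normal part. These subspaces do not coincide, and the induced boundary value problems differ. The paper's choice is engineered precisely so that $D\Upsilon=DB_1$ on the subspace: the scalar boundary condition then involves only $\partial_\nu v$, the system decouples completely, and Fredholmness of $(DT,D\Upsilon)$ restricted to the subspace follows immediately from the elliptic estimate of Proposition~\ref{elliptic}, already proven with the operators $(DB_1,DB_2,DB_3)$. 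That is the content of Lemma~\ref{Fredholm1}, which is an essentially effortless corollary of Proposition~\ref{elliptic}. On your $K_1^0$, by contrast, $D\Upsilon=DB_1+DB_3$ picks up the residual term $-\partial_\nu Z^n$, so the scalar boundary condition couples $\partial_\nu v$ and $\partial_\nu Z^n$. This is still a legitimate elliptic boundary condition --- your Shapiro--Lopatinski check is correct: $A_1=\cdots=A_n=0$ from the Dirichlet conditions forces $A_n=0$, and then $\mp\tfrac{s(n-1)}{2}|\xi'|A_0=0$ forces $A_0=0$ --- but it is \emph{not} the same boundary operator as in Proposition~\ref{elliptic}, so ``verbatim'' is an overstatement: you must rerun the Agmon--Douglis--Nirenberg complementing check, the global weighted estimate, the semi-Fredholm argument, and the Maxwell smoothing trick for the Fredholm index with your new boundary operator $(Z|_{\partial M},\,DB_1+DB_3)$. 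This is routine but not free. (Interestingly, your choice slightly simplifies the later regularity bootstrap in the proof of Theorem~\ref{prescribed2}, since all components of $Y$ satisfy a Dirichlet condition, at the cost of the extra ADN check here.) Two smaller remarks: the finite-codimension step requires, as in Lemma~\ref{Fredholm1}, comparing against the full Fredholm operator $(DT, Z|_{\partial M}, D\Upsilon)$ on all of $T_{(1,0)}\mathcal C$, which you should state explicitly; and for the existence of $K_2$ you lean on the boundaryless arguments of \cite{CorvinoSchoen,EHLS}, whereas the paper proves the boundary version of surjectivity (Proposition~\ref{prop1}) by combining Lemma~\ref{boundary_surjective} with the usual unique-continuation/ODE argument --- your one-line appeal papers over that extra work, though the idea is the same.
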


For $\lambda=\infty$, the relevant differential is
\begin{equation}
D\hat{\mathcal P}_\infty|_{((1,0),(0,0))}((v,Z),(h,w))=(DT,D\Upsilon)|_{(1,0)}(u,Y)+(D\Phi,D\Theta)|_{(g,\pi)}(h,w).
\end{equation}
To construct $K_1$, we will require the following lemma.

\begin{lem}\label{Fredholm1}
$(DT_{(1,0)},DB_1|_{(1,0)})$ restricted to $\ker DB_2|_{(1,0)}\cap \ker DB_3|_{(1,0)}$ is Fredholm. 
\end{lem}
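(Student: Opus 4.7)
The plan is to derive this from the Fredholm property~\eqref{FredholmProperty} of the full boundary operator
\[
G := (DT, DB_1, DB_2, DB_3): T_{(1,0)}\mathcal{C} \to \mathcal{L} \times W^{1-\frac 1p,p}(\partial M) \times W^{2-\frac 1p,p}(T(\partial M)) \times W^{1-\frac 1p,p}(\partial M)
\]
established in Proposition~\ref{elliptic}, by peeling off the $DB_2$ and $DB_3$ components via a block-matrix argument.

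First I would verify that $(DB_2, DB_3) : T_{(1,0)}\mathcal{C} \to W^{2-\frac 1p,p}(T(\partial M)) \times W^{1-\frac 1p,p}(\partial M)$ admits a bounded right inverse. In the almost-Fermi coordinates from the proof of Proposition~\ref{elliptic}, the boundary operators have the explicit form $DB_2(v,Z) = \sum_{i<n} Z^i \partial_i$ and $DB_3(v,Z) = H Z^n - \partial_n Z^n$. For a prescribed tangential field $\zeta$ and scalar function $\eta$ on $\partial M$, we set $v \equiv 0$, extend $\zeta$ to a tangential $W^{2,p}_{-q}(TM)$ vector field supported near $\partial M$, and use the sharp trace theorem (Lemma~\ref{trace}) to choose $Z^n \in W^{2,p}_{-q}(M)$ with $Z^n|_{\partial M} = 0$ and $\partial_n Z^n|_{\partial M} = -\eta$. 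This produces a bounded right inverse, so $K := \ker DB_2 \cap \ker DB_3$ is closed and complemented, and we obtain a topological direct sum decomposition $T_{(1,0)}\mathcal{C} = K \oplus W$ for which $(DB_2, DB_3)|_W$ is a Banach-space isomorphism onto the boundary target.

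In this decomposition $G$ becomes block upper-triangular, since $(DB_2, DB_3)$ vanishes identically on $K$:
\[
G(k, w) = \bigl((DT, DB_1)|_K (k) + (DT, DB_1)|_W (w),\; (DB_2, DB_3)|_W (w)\bigr).
\]
A routine Schur-complement argument then transfers the Fredholm property from $G$ to $(DT, DB_1)|_K$. Injectivity of $(DB_2, DB_3)|_W$ forces any element of $\ker G$ to have vanishing $W$-component, so $\ker (DT, DB_1)|_K = \ker G$ is finite-dimensional. Closedness of $\operatorname{Im}(DT, DB_1)|_K$ follows by noting that a convergent sequence in it corresponds under $(\alpha,\beta) \mapsto (\alpha,\beta,0,0)$ to a convergent sequence in the closed set $\operatorname{Im} G$, whose preimages in $K \oplus W$ must have vanishing $W$-component; finite codimension is then immediate from the analogous bound on $\operatorname{coker} G$. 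The main obstacle is really the explicit construction of the bounded right inverse for $(DB_2, DB_3)$, and this is essentially the same trace-theoretic argument already carried out in the proof of Lemma~\ref{boundary_surjective}.
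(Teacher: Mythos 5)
Your proof is correct, but takes a genuinely different route from the paper's. The paper handles the two halves of ``Fredholm'' by different means: it re-uses the elliptic estimate from Proposition~\ref{elliptic}(1), noting that when $(v,Z)\in\ker DB_2\cap\ker DB_3$ the $DB_2,DB_3$ terms drop out, which combined with compactness of $W^{2,p}_{-q}(M)\hookrightarrow L^p(U)$ gives semi-Fredholmness of $(DT,DB_1)$ on the kernel; it then establishes finite codimension by contradiction --- if $(DT,DB_1)[\ker DB_2\cap\ker DB_3]$ had an infinite-dimensional complement $X$, one checks that $X\times\{0\}\times\{0\}$ would be disjoint from the image of the full operator $(DT,DB)$, contradicting Proposition~\ref{elliptic}(2). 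You instead derive everything from the Fredholm property of $G=(DT,DB)$ via a topological splitting $T_{(1,0)}\mathcal C = K\oplus W$ with $(DB_2,DB_3)|_W$ an isomorphism, followed by a block-triangular/Schur-complement reduction. Your argument requires the extra step of constructing a bounded right inverse for $(DB_2,DB_3)$ (a trace-theorem construction in the spirit of Lemma~\ref{boundary_surjective}), but in exchange it derives finite-dimensionality of the kernel, closedness of the range, and finite codimension all at once from the single input that $G$ is Fredholm, without invoking the elliptic estimate a second time. Both are sound; the paper's version is shorter because the needed estimate is already available, while yours is perhaps more robust, making no further appeal to the elliptic machinery once $G$ is known to be Fredholm. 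One small point worth making explicit in your write-up: the tangential extension of $\zeta$ and the choice of $Z^n$ are local constructions in almost-Fermi coordinates, so a cutoff/partition-of-unity step is needed to produce a global element of $W^{2,p}_{-q}(TM)$, but this is routine.
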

\begin{proof}
By Proposition \ref{elliptic} (1), we have the estimate 
  \[\|(v,Z)\|_{W^{2,p}_{-q}}\lesssim \|DT(v,Z)\|_\mathcal L+\|DB_1(v,Z)\|_{ W^{1-\frac 1p,p}}+\|(v,Z)\|_{L^p(U)}\]
  for every $(v,Z)\in \ker DB_2\cap \ker DB_3$. As mentioned in the proof of Proposition~\ref{elliptic}, it follows from
  \cite[Theorem 5.21]{Schechter} that $(DT,DB_1)$ is semi-Fredholm. It remains to show that $(DT,DB_1)[\ker DB_2\cap DB_3]$ has finite codimension. If not, then there exists an infinite-dimensional subspace $X\subset \mathcal L\times W^{1-\frac 1p,p}(\partial M)$ such that $(DT,DB_1)[\ker DB_2\cap DB_3]\cap X=0$. But that would imply that 
  \[(DT,DB)[T_{(1,0)}\mathcal C]\cap [X\times \{0\}\times\{0\}] =0,\]
  which is impossible since the full elliptic operator is Fredholm. 
\end{proof}

To construct $K_2$, we need to know that the lineariation of $(\Phi, \Theta)$ is surjective, which generalizes Proposition 3.1 in \cite{CorvinoSchoen}. See also the related work by Zhongshan An \cite{ZAn}.

\begin{prop}\label{prop1}
$(D\Phi,D\Theta)|_{(g,\pi)}:T_{(g,\pi)}\mathcal D\to \mathcal L\times W^{1-\frac 1p,p}(\partial M)$ is surjective.
\end{prop}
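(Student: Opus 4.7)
My plan is to prove surjectivity of $(D\Phi,D\Theta)|_{(g,\pi)}$ via the closed range theorem, reducing the problem to an absence-of-KIDs statement in the interior, as in Corvino--Schoen~\cite{CorvinoSchoen}, combined with the boundary surjectivity of $D\Theta$ already established in Lemma~\ref{boundary_surjective}. The operator is linear between Banach spaces, so surjectivity follows once I verify (i) that the range is closed and (ii) that the annihilator of the range in the topological dual is trivial.

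The core step is (ii). Let $(u,Z,\sigma)\in (\mathcal L\times W^{1-\frac{1}{p},p}(\partial M))^*$ annihilate $\mathrm{Image}(D\Phi,D\Theta)$. Testing against deformations $(h,w)\in T_{(g,\pi)}\mathcal D$ with compact support in the interior of $M$ kills both the boundary contribution from integration by parts and $D\Theta(h,w)$ itself, so the annihilator relation forces
\[D\Phi^*|_{(g,\pi)}(u,Z)=0\]
distributionally on $M\setminus\partial M$, hence, by elliptic regularity for the overdetermined operator $D\Phi^*$, on all of $M$.

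The main obstacle is then to show that any $(u,Z)$ in the dual weight class $L^{p'}_{q+2-n}(M)\times L^{p'}_{q+2-n}(T^*M)$ satisfying this adjoint constraint must vanish identically. I intend to adapt the argument of \cite[Proposition~3.1]{CorvinoSchoen}: in each asymptotic end, $D\Phi^*$ is asymptotic to the Killing operator on Euclidean space, whose kernel consists of the infinitesimal isometries of Minkowski, namely time translations, spatial translations, rotations, and boosts. These are all $O(1)$ or $O(|x|)$ at infinity and therefore fail to lie in $L^{p'}_{q+2-n}$ when $q<n-2$. A spherical-harmonic / ODE analysis of the asymptotic system rules out every admissible mode, and unique continuation for the overdetermined elliptic adjoint transports the asymptotic vanishing into the compact part of $M$, giving $(u,Z)\equiv 0$. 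Once $(u,Z)=0$, the annihilator relation reduces to $\langle\sigma,D\Theta(h,w)\rangle_{\partial M}=0$ for all $(h,w)\in T_{(g,\pi)}\mathcal D$, and the surjectivity of $D\Theta$ from Lemma~\ref{boundary_surjective} immediately forces $\sigma=0$.

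For the closed range statement (i), the semi-Fredholm estimate for the overdetermined elliptic adjoint $D\Phi^*$, combined with the sharp trace theorem and the weighted elliptic machinery already deployed in the proof of Proposition~\ref{elliptic}, yields closed range for $(D\Phi,D\Theta)$ in the same fashion as in~\cite{CorvinoSchoen}. Combining closed range with triviality of the annihilator completes the proof. The delicate part of the argument is verifying the asymptotic / unique continuation step in the relatively low-regularity class $W^{2,p}_{-q}\times W^{1,p}_{-q-1}$ used here; this can be done either by a direct adaptation of~\cite{CorvinoSchoen} or along the lines of An's work~\cite{ZAn}.
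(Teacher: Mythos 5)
Your proposal is correct and essentially follows the paper's argument: both proceed by Hahn--Banach, testing against interior-supported deformations to obtain $D\Phi|_{(g,\pi)}^*(\xi,V)=0$ distributionally, running the Corvino--Schoen decay bootstrap plus unique continuation (or the ODE argument) to force $(\xi,V)\equiv 0$, and invoking the surjectivity of $D\Theta$ from Lemma~\ref{boundary_surjective} to dispose of the boundary part of the annihilator. The only difference is organizational: the paper first reduces to showing $D\Phi$ restricted to $\ker D\Theta$ is surjective onto $\mathcal L$, obtaining closed range and finite codimension via the inclusion $DT|_{(1,0)}[\ker DB|_{(1,0)}]\subset D\Phi|_{(g,\pi)}[\ker D\Theta|_{(g,\pi)}]$ together with Lemma~\ref{Fredholm1}, whereas you work directly with the annihilator of the image of the full operator $(D\Phi,D\Theta)$ and leave the closed range step more loosely justified.
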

\begin{proof}
Since $D\Theta|_{(g,\pi)}:T_{(g,\pi)}\mathcal D\to W^{1-\frac 1p,p}(\partial M)$ is surjective (Lemma \ref{boundary_surjective}), it suffices to show that $D\Phi|_{(g,\pi)}:\ker(D\Theta_{(g,\pi)})\to \mathcal L$ is surjective. 

First we claim that 
\[D\Phi|_{(g,\pi)}[\ker D\Theta|_{(g,\pi)}]\subset\mathcal L\]
is closed and has finite codimension. It suffices to observe that
\begin{equation}DT|_{(1,0)}[\ker DB|_{(1,0)}]\subset D\Phi|_{(g,\pi)}[\ker D\Theta|_{(g,\pi)}],\label{inclusion1}\end{equation}
as the former is closed with finite codimension by repeating the argument of Lemma \ref{Fredholm1}.  

As a consequence of the Hahn--Banach theorem \cite[Corollary 1.8]{Brezis}, if $D\Phi|_{(g,\pi)}[\ker D\Theta|_{(g,\pi)}]\ne \mathcal L$, there is a nontrivial bounded linear functional $(\xi, V)\in \mathcal L^*= L^{p'}_{2+q-n}$ which annihilates it. By the inclusion \eqref{inclusion1}, $(\xi,V)$ annihilates $DT|_{(1,0)}(\ker DB|_{(1,0)})$. By considering arbitrary test data $(h,w)\in T_{(g,\pi)}\mathcal D$ compactly supported away from $\partial M$ (whence $(h,w)\in \ker DB|_{(1,0)}$), we see that $(\xi,V)$ solves the equation 
\begin{equation}D\Phi|_{(g,\pi)}^*(\xi,V)=0\label{dual}\end{equation}
in the sense of distributions. Arguing as in \cite[Appendix B]{HuangLee}, we conclude that $\xi$ and $V$ are $C^2$ in the interior and hence solve the equation classically. Note that the boundary behavior of $(\xi,V)$ is not needed for our argument. 

The result now follows from arguments in \cite{CorvinoSchoen}, as explained in detail in~\cite[Theorem 9.9]{Lee}. (The presence of a boundary is irrelevant to this part of the argument.) For the reader's convenience, we summarize the bas argument: The equations \eqref{dual} imply homogeneous Hessian-type equations for $(\xi,V)$, with coefficients decaying according to the asymptotic flatness assumption. Using this Hessian-type system, the $L^{p'}$ decay can be bootstrapped to become pointwise $C^1$ decay. Next, initial decay of $(\xi, V)$ then implies Hessian decay that is more than 2 orders faster, which gives improved decay on $(\xi, V)$ simply by (twice) integrating along coordinate rays to infinity. Bootstrapping in this way, $(\xi, V)$ must have infinite-order pointwise decay. From here, one can use a unique continuation argument (as in \cite{CorvinoSchoen})  to see that $(\xi, V)$ vanishes identically, or alternatively, as explained in the proof of~\cite[Theorem 9.9]{Lee}, for each $p\in M$ 
one can directly use the second-order system of ODEs satisfied by $(\xi, V)$ along a curve from $p$ to infinity to see that infinite-order decay implies vanishing at $p$. This ODE argument originated in~\cite[Lemma B.3]{HMM18}.
\end{proof}

With our preparatory results in place, we can construct $K_1$ and $K_2$.

\begin{proof}[Proof of Lemma \ref{main}]
Let $K_1\subset \ker DB_2\cap \ker DB_3$ (note that this intersection is closed in $T_{(1,0)}\mathcal C$ and hence is a Banach space) be a complementing subspace for the kernel of $(DT,DB_1)$ inside $\ker DB_2\cap \ker DB_3$. This subspace exists because the kernel is finite dimensional (Lemma \ref{Fredholm1}) and finite dimensional subspaces are always complemented. We then note that by the formula for $D\Upsilon$, for $(v,Z)\in K_1$,
\[D\Upsilon(v,Z)=DB_1(v,Z).\]
It follows that
\[(DT,D\Upsilon):K_1\to \mathcal L\times W^{1-\frac 1p,p}(\partial M)\]
is injective. Furthermore, its range $R=(DT,DB_1)[\ker DB_2\cap \ker DB_3]$ is a closed subspace of 
$\mathcal L\times W^{1-\frac 1p,p}(\partial M) $  with finite codimension by Lemma \ref{Fredholm1}.

Let $A$ be a finite-dimensional subspace of $\mathcal L\times W^{1-\frac 1p,p}(\partial M)$ which complements $R$ in $\mathcal L\times W^{1-\frac 1p,p}(\partial M)$. Using Proposition \ref{prop1}, we can find a finite-dimensional subspace $K'_2\subset T_{(g,\pi)}\mathcal D$ so that 
\[(D\Phi,D\Theta)|_{(g,\pi)}(K_2)= A.\]
Note that smooth, compactly supported sections of $(T^*M \odot T^* M)\times (TM\odot TM)$ are dense in
$T_{(g,\pi)}\mathcal D=
W^{2,p}_{-q}(T^*M \odot T^* M) \times W^{1,p}_{-q-1}(TM\odot TM)$, so we can find a finite-dimensional space $K_2$ made up of 
 smooth, compactly supported sections that closely approximates $K'_2$. By choosing a good enough approximation, the image of $K_2$ will be close enough to $A$ to still be complementary to $R$. With this choice of $K_1$ and $K_2$, $D\hat{\mathcal P}_\infty|_{((1,0),(0,0))}$ is an isomorphism.

Since $(g_\lambda, \pi_\lambda)\to(g_\infty, \pi_\infty)$ in $\mathcal D$, it follows from~\eqref{first} of Lemma \ref{AppendixA} that $D\hat{\mathcal P}_\lambda|_{((1,0),(0,0))}\to D\hat{\mathcal P}_\infty|_{((1,0),(0,0))}$ in operator norm as $\lambda\to\infty$. Therefore $D\hat{\mathcal P}_\lambda|_{((1,0),(0,0))}$ is also an isomorphism for sufficiently large $\lambda$, and its inverse satisfies \eqref{inversebound}. Finally, the Lipschitz constant bound follows from the Hessian bound \eqref{second} of Lemma \ref{AppendixA}. 
\end{proof}

We state the relevant standard elliptic regularity fact needed to establish the boundary regularity in Theorem~\ref{prescribed2}. The only subtlety is that we are not assuming $u$ to be $C^2$. The version here (for Sobolev $u$) can be read off from \cite[Theorem 6.4.8]{Morrey}.

\begin{lem}\label{Morrey}
Let $\Omega\subset\Bbb R^n$ be a bounded domain with $C^{2,\alpha}$ boundary, $\alpha\in (0,1)$. Suppose that  $a^{ij}\in C^{0,\alpha}(\overline\Omega)$ is positive definite, and suppose that  $f\in C^{0,\alpha}(\overline\Omega)$ and $g\in C^{2,\alpha}(\partial\Omega)$. If $u\in W^{2,p}(\Omega)$ ($p>n$) is a strong solution of 
\begin{align*}
    a^{ij}\partial_i\partial_ ju&=f\quad\text{in $\Omega$},\\
      u&= g\quad\text{on $\partial\Omega$},
\end{align*}
then $u\in C^{2,\alpha}(\overline\Omega).$

If instead $\beta^i\in C^{1,\alpha}(\partial\Omega)$ is an oblique vector field and $g\in C^{1,\alpha}(\partial\Omega)$ and $u\in W^{2,p}(\Omega)$ ($p>n$) satisfies 
\begin{align*}
    a^{ij}\partial_i\partial_ ju&=f\quad\text{in $\Omega$},\\
    \beta^i \partial_i u&= g\quad\text{on $\partial\Omega$},
\end{align*}
then $u\in C^{2,\alpha}(\overline\Omega)$.
\end{lem}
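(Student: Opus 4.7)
The plan is to reduce this to the classical Schauder theory for linear elliptic equations with Hölder coefficients, using Sobolev embedding to bootstrap the regularity of $u$. Since $p>n$, the Sobolev embedding $W^{2,p}(\Omega)\hookrightarrow C^{1,\beta}(\overline\Omega)$ with $\beta=1-n/p>0$ gives a priori that $u$ is classically $C^1$ up to the boundary. Setting $\alpha'=\min(\alpha,\beta)$, both the coefficients $a^{ij}$ and the right-hand side $f$ are $C^{0,\alpha}$, so the equation $a^{ij}\partial_i\partial_j u=f$ is a linear elliptic equation of Schauder type which is satisfied pointwise a.e.

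For the Dirichlet case, I would first extend $g\in C^{2,\alpha}(\partial\Omega)$ to a $C^{2,\alpha}$ function $\tilde g$ on $\overline\Omega$ (possible since $\partial\Omega$ is $C^{2,\alpha}$) and consider $w=u-\tilde g$, which has zero boundary values and satisfies an equation of the same type with a modified $C^{0,\alpha}$ right-hand side. Using a $C^{2,\alpha}$ boundary-straightening diffeomorphism to reduce locally to the half-space case, one then applies the standard interior plus boundary Schauder estimate (as in Gilbarg–Trudinger, Chapter 6) to upgrade $w$ from $W^{2,p}$ to $C^{2,\alpha}$ up to the boundary. Combining local estimates via a partition of unity and using the interior Schauder theorem away from $\partial\Omega$ yields $u\in C^{2,\alpha}(\overline\Omega)$.

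The oblique case is handled analogously. After the same $C^{2,\alpha}$ boundary-flattening, the condition $\beta^i\partial_i u=g$ becomes an oblique derivative boundary condition in the half-space with a $C^{1,\alpha}$ coefficient vector field that is transverse to the boundary. The relevant Schauder-type boundary regularity result (Gilbarg–Trudinger Section 6.7, or the Agmon–Douglis–Nirenberg framework specialized to the oblique problem) then upgrades $u$ from $W^{2,p}$ to $C^{2,\alpha}$ up to the boundary. The fact that the oblique vector field is only $C^{1,\alpha}$ rather than smooth is exactly the minimum regularity required for the boundary Schauder estimate at the $C^{2,\alpha}$ level.

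The main subtlety, and the one reason this is not completely trivial, is that $u$ is a priori only a \emph{strong} solution rather than a classical $C^2$ solution, so one cannot directly differentiate the equation. This is handled either by the approach in Morrey's Theorem 6.4.8 (approximation via difference quotients combined with the a priori Schauder estimate), or by observing that after boundary flattening the problem reduces to a constant-coefficient model problem in the half-space where the estimate is explicit, and then perturbing. Either route produces the stated conclusion, and both variants are entirely standard once the Sobolev embedding upgrade to $C^{1,\beta}$ has been performed.
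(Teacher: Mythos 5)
The paper does not actually give a proof of this lemma; it simply cites Morrey's Theorem 6.4.8, noting that the only subtlety is that $u$ is a priori $W^{2,p}$ rather than $C^2$. Your sketch---Sobolev embedding to $C^{1,\beta}$, reduction to zero boundary data (or a flattened oblique problem), and Schauder boundary estimates combined with either a difference-quotient or existence-and-uniqueness argument to handle the strong-versus-classical gap---is a correct and entirely standard derivation of exactly this result, so it is consistent with the paper's citation. One small remark: the introduction of $\alpha'=\min(\alpha,\beta)$ is unnecessary (and could mislead a reader into thinking the final Hölder exponent is capped by $\beta$); the coefficients and data are $C^{0,\alpha}$, so Schauder theory yields $C^{2,\alpha}$ regularity regardless of the value of $\beta$, and you never in fact use $\alpha'$.
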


We now use the inverse function theorem to prove Theorem \ref{prescribed2}. The proof is a boundary version of the argument given for \cite[Theorem 9.10 and Proposition 9.11]{Lee}, which itself is based on \cite{CorvinoSchoen}. 

\begin{proof}[Proof of Theorem \ref{prescribed2}]
By Lemma \ref{main} and the inverse function theorem for Banach spaces (see in particular the ``quantitative" version \cite[Theorem A.43]{Lee}), there exists a constant $C$ such that for $r>0$ sufficiently small and $\lambda$ sufficiently large, $\hat{\mathcal P}_\lambda^{-1}$ exists and maps $B_r(2\mu_\lambda,J_\lambda,\theta)\subset \mathcal L\times W^{1-\frac 1p,p}(\partial M) $
into $B_{Cr}((1,0),(0,0))\subset [(1,0)+K_1]\times K_2$, where $(\mu_\lambda, J_\lambda)$ are the constraints of $(g_\lambda, \pi_\lambda)$. So if $(\tilde \mu,\tilde J,\tilde \theta)$ satisfy the hypotheses of the theorem with 
\[\|(2\tilde \mu,\tilde J,\tilde\theta)-(2\mu,J,\theta)\|_{\mathcal L\times W^{1-\frac 1p,p}}<\frac r2\] and $\lambda$ is sufficiently large that $\|(2\mu_\lambda,J_\lambda)-(2\mu,J)\|_\mathcal L<\frac r2$,
then there exist $\alpha_\lambda\in [(1,0)+K_1]\times K_2$ such that $\hat{\mathcal P}_\lambda(\alpha_\lambda)=(2\tilde \mu,\tilde J,\tilde\theta)$, and
\[\|\alpha_\lambda -((1,0),(0,0))\|_{\mathcal C\times T_{(g,\pi)}\mathcal D}\le C\|(2\tilde \mu,\tilde J,\tilde\theta)-(2\mu,J,\theta)\|_{\mathcal L\times W^{1-\frac 1p,p}}\le Cr.\]
By choosing $r$ smaller (and hence also $\lambda$ larger) depending on the constant in Morrey's inequality, we can ensure $|u_\lambda-1|< 1$ everywhere, so that it is a valid conformal factor. Set $\delta=\frac r2$. 

Having made all of these choices, write $\alpha_\lambda=((u,Y), (h,w))$. We claim that the initial data $(\tilde g,\tilde\pi)=\Psi_{(g_\lambda, \pi_\lambda)}(u, Y)+(h,w)$ is the desired solution in the conclusion of Theorem~\ref{prescribed2}. By construction, it has the desired constraints $(\tilde \mu,\tilde J)$ and outer/inner null expansion $\tilde\theta$ on $\partial^\pm M$, and it satisfies the desired estimate
\[\|(\tilde g,\tilde\pi)-(g,\pi)\|_\mathcal D\le C\|(\tilde \mu,\tilde J,\tilde\theta)-(\mu,J,\theta)\|_{\mathcal L\times W^{1-\frac 1p,p}}.\]
And since $(h_\lambda,w_\lambda)$ is compactly supported, 
\[(\tilde g,\tilde\pi)=\Psi_{(\overline g,0)}(u_\lambda,Y_\lambda)\]
for $|x|\ge 2\lambda$, as desired. 

It only remains to show that show that if $(\tilde\mu,\tilde J)$ is $C^{0,\alpha}_{-n-1-\ve}$ up to the boundary and $\tilde\theta$ is $C^{1,\alpha}$, then $(\tilde g,\tilde\pi)$ is $C^{2,\alpha}_{2-n}\times C^{1,\alpha}_{1-n}$ up to the boundary and has harmonic asymptotics. Since $(h,w)$ is compactly supported and smooth, it suffices to show that $(u,Y)$ satisfies the asymptotic expansion in Definition~\ref{harmasymp} and is $C^{2,\alpha}$ up to the boundary. The asymptotic expansion is well-known from earlier references such as~\cite[Lemma 9.8]{Lee}, but here we carefully account for the presence of the $(h,w)$ term and the boundary in order to prove regularity up to the boundary.

To prove the desired result, we re-write 
\[\hat{\mathcal P}_\lambda ((u,Y),(h,w))=(2\tilde \mu,\tilde J,\tilde\theta)\] 
as a set of linear elliptic equations in $(u, Y)$, viewing the nonlinearities as either coefficients or nonhomogenous terms. More precisely, after we choose local coordinates (which is fine since we are proving a local regularity result now), we will  have equations for $u, Y_1,\ldots, Y_n$ of the form described in Lemma~\ref{Morrey} above. We will now explain this in detail. 

By the Morrey embedding theorem, we already know that $(u,Y)\in C^{1,\alpha}_{\mathrm{loc}}$ on $M$. We first examine the $\tilde\mu$ equation as our equation for $u$. First, the non-scalar curvature terms of $\tilde\mu$ are $C^{0,\alpha}_{\mathrm{loc}}$ and therefore can be viewed as part of the $C^{0,\alpha}_{\mathrm{loc}}$ nonhomogenous term. Moreover, using~\cite[Exercise 1.2]{Lee} or otherwise, we can re-write 
\begin{equation}\label{tilde_operator}
R(u^s g_\lambda + h) = 
su^{s-1}(\tilde g^{ik} \tilde g^{jl}-\tilde g^{ij}\tilde g^{kl})  (g_{\lambda})_{kl} \partial_i \partial_j u + (\text{terms in }C^{0,\alpha}_{\mathrm{loc}})
\end{equation}
Let $U$ be a bounded set large enough so that every element of $K_2$ vanishes outside $U$. So in the complement of $U$, where $\tilde{g}$ is just a conformal change, this becomes
\[R(u^s g_\lambda + h) = -s(n-1)u^{-s-1}(g_{\lambda})^{ij} \partial_i \partial_j u+  (\text{terms in }C^{0,\alpha}_{\mathrm{loc}}),
\]
whose coefficient matrix is obviously negative definite. 
Meanwhile,  $g_\lambda=g$ on $U$ for sufficiently large $\lambda$, so for $\delta$ sufficiently small, \eqref{tilde_operator} will be strictly elliptic because $|u-1|$ and $h$ can be made uniformly small enough so that the second-order coefficients in~\eqref{tilde_operator} can be made uniformly close to
\[
s(   g^{ik} g^{jl}- g^{ij} g^{kl})  g_{kl} = -s(n-1)g^{ij},
\]
which we know is negative definite. In either case, $u$ solves an elliptic equation of the type described in~Lemma~\ref{Morrey}. 



For the $\tilde{J}$ equations, we compute 
\begin{equation*}
    \Div_{\tilde g}\tilde \pi^i=u^{-\frac 32 s}\left[-\tfrac 32 s\frac{\nabla_j u}{u} (\pi_\lambda^{ij}+\mathfrak L_{g_\lambda} Y^{ij})+\tilde \nabla_j(\pi_\lambda^{ij}+\mathfrak L_{g_\lambda}Y^{ij}+w^{ij})\right].
    \end{equation*}
Again, the lower order terms are clearly $C^{0,\alpha}_{\mathrm{loc}}$, so we have
\begin{equation}\label{inC0alpha}
    \tilde \nabla_j \mathfrak L_{g_\lambda} Y^{ij} \in C^{0,\alpha}_{\mathrm{loc}}.
\end{equation} 
Meanwhile, by looking at the first order terms of 
$\mathfrak L_{g_\lambda}Y$, we have 
\[\mathfrak L_{g_\lambda}Y^{ij} =
g_\lambda^{ik}\partial_k Y^j+g_\lambda^{jk}\partial_k Y^i- \partial_k Y^k g_\lambda^{ij}+(\text{terms in }C^{0,\alpha}_{\mathrm{loc}}).\]
By taking $\partial_j$, we can see that 
\begin{align*}
    \tilde \nabla_j \mathfrak L_{g_\lambda} Y^{ij}&=
g_\lambda^{ik}\partial_j\partial_k Y^j+g_\lambda^{jk}\partial_j\partial_k Y^i- \partial_j\partial_k Y^k g_\lambda^{ij} +(\text{terms in }C^{0,\alpha}_{\mathrm{loc}})\\
&=g^{jk}_\lambda\partial_k\partial_j Y^i +(\text{terms in }C^{0,\alpha}_{\mathrm{loc}}).
\end{align*}
Combining this with~\eqref{inC0alpha}, we see that 
\[ g^{jk}_\lambda\partial_k\partial_j Y^i\in C^{0,\alpha}_{\mathrm{loc}},\]
where $g_\lambda$ is positive definite, so each $Y^i$ also satisfies an elliptic equation of the of the type described in~Lemma~\ref{Morrey}. 

The only thing left to check is that $u, Y^1,\ldots, Y^n$ satisfy boundary conditions of the type described in Lemma~\ref{Morrey}. 
We have stipulated that $\Theta(\tilde g,\tilde \pi)=\tilde\theta$, which is only one boundary condition. However, the space $K_1$ in the definition of $\hat{\mathcal P}$ contains the other $n$ boundary conditions we need. Indeed, since $(u,Y)\in K_1$, then by definition 
\begin{align*}
    Y^\top &=0\\
   HY^n -\frac{\partial Y^n}{\partial x^n} &=0,
\end{align*}
where we have expressed the second condition in an almost-Fermi coordinate system. (We may do this since regularity is a local property.) So we see that $Y^1,\ldots, Y^{n-1}$ satisfy the Dirichlet boundary condition, and we claim that $Y^n$ and $u$ satisfy Neumann-type conditions of the type described in Lemma \ref{Morrey}. For $Y^n$, this is immediate from observing that 
$HY^{n-1}$ is $C^{1,\alpha}$ up to the boundary. Hence Lemma \ref{Morrey} implies that $Y$ is $C^{2,\alpha}$ up to the boundary.  

Using this upgraded regularity for $Y$, we can interpret $\Theta(\tilde g,\tilde \pi)=\tilde\theta$ as a Neumann-type boundary condition for $u$.
We clearly have $-\tilde\pi^{ij}\tilde\nu_i\tilde\nu_j\in C^{1,\alpha}(\partial M)$, so we just have to investigate the mean curvature of $\partial M$ with respect to $\tilde{g}$, which we denote by $H_{\tilde g}$. As in the proof of Lemma \ref{theta_regularity}, we write $\partial M$ as a regular level set of a smooth function $f$, so that 
\[H_{\tilde g}=\gamma^{ij}
\frac{1}{|\nabla f|_{\tilde g}}(\partial_{ij}f-\tilde \Gamma^k_{ij}\partial_k f),\]
where $\gamma^{ij}=\tilde g^{ij}-\tilde \nu^i\tilde \nu^j$.
By focusing only on the terms with derivatives of $u$ (in the Christoffel symbols), we have
\[(-2\gamma^{ij}\tilde g^{kl} + \gamma^{lj}\tilde g^{ik}) 
\tilde \nu_k su^{s-1} g_{jl}\partial_i u 
 \in C^{1,\alpha}(\partial M),
\]
which is an equation of the form $\beta^i\partial_i u\in C^{1,\alpha}(\partial M)$, where each $\beta^i\in C^{1,\alpha}(\partial M)$ as well.
 The only thing left to check is that $\beta^i$ is not tangent to $\partial M$. To see this, observe that when $h=0$, we have $\beta^i = (n-1)su^{-1}\nu^i$, and therefore $\beta^i$ is oblique for sufficiently small $h$.
\end{proof}

\subsection{Dominant energy condition density theorem} We begin with a precise re-statement of Theorem \ref{DECdensity}:

\begin{thm}[Density theorem for DEC] \label{DECdensity2}
Let $(M^n,g,\pi)$ be a complete asymptotically flat initial data set satisfying the dominant energy condition $\mu \ge |J|$ and with compact boundary $\partial M$ having outer null expansion $\theta$ on $\partial^+ M$ and inner null expansion $\theta$ on $\partial^- M$ . Let $p>n$ and $\frac{n-2}{2}<q<n-2$ be strictly less than the decay rate of $(g,\pi)$. For any $\ve>0$ there exists a constant $\delta>0$ so that the following is true:

For any $\tilde\theta \in C^{1,\alpha}(\partial M)$ satisfying $\|\tilde\theta -\theta\|_{W^{1-\frac 1p,p}}<\delta$ there exists an asymptotically flat initial data set $(\tilde g,\tilde \pi)$, $C^{2,\alpha}\times C^{1,\alpha}$-regular up to the boundary, also satisfying the dominant energy condition, such that $(\tilde g,\tilde \pi)$ has harmonic asymptotics in each end of $M$, the new outer/inner null expansion on $\partial^\pm M$ is $\tilde\theta$, and the new data set satisfies \[\|(\tilde g,\tilde\pi)-(g,\pi)\|_{\mathcal D}<\ve\quad\text{and}\quad\|(\tilde \mu,\tilde J)-(\mu,J)\|_{L^1}<\ve.\]

Furthermore, we can choose $(\tilde g,\tilde \pi)$ such that the \emph{strict} dominant energy condition holds, $\tilde \mu>|\tilde J|$. Simultaneously, $(\tilde \mu,\tilde J)$ may be chosen to decay as fast as we like in the sense that if $f$ is any positive smooth function, then we can demand $\tilde \mu +|\tilde J|\le f(|x|)$ on the end. 

Alternatively, we can choose $(\tilde g,\tilde \pi)$ to be vacuum outside a compact set, that is, $\tilde\mu =|\tilde J|=0$ outside a compact set. 
\end{thm}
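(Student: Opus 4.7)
My plan is to reduce Theorem \ref{DECdensity2} to the prescribed-constraint density theorem, Theorem \ref{prescribed2}, by constructing a family of approximating constraints $(\mu_\eta, J_\eta)$ which satisfy strict DEC together with the regularity/decay hypotheses of Theorem \ref{prescribed2}, and then applying that theorem to the triple $(\mu_\eta, J_\eta, \tilde\theta)$.

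For the approximation, I would fix a mollifier $\rho_\eta$ on each end, a smooth cutoff $\chi_R$ equal to $1$ on $\{|x|\leq R\}$ and $0$ outside $\{|x|\leq 2R\}$ on each end, and a smooth everywhere-positive function $\phi$ on $M$ lying in $L^1\cap C^{0,\alpha}_{-n-1-\ve_0}$ for some $\ve_0>0$ with arbitrarily fast decay on the ends. Set
\begin{equation*}
\mu_\eta := \chi_{R(\eta)}(\rho_\eta * \mu) + \eta\phi, \qquad J_\eta := \chi_{R(\eta)}(\rho_\eta * J),
\end{equation*}
with $R(\eta)\to\infty$ as $\eta\to 0$. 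The structural observation driving the whole argument is that the pointwise DEC $\mu\ge|J|$ survives convolution, since $\rho_\eta * \mu \ge \rho_\eta * |J| \ge |\rho_\eta * J|$ by Jensen's inequality and the triangle inequality; it is preserved by multiplication by $\chi_R\ge 0$; and it is strictly upgraded by adding $\eta\phi>0$, yielding $\mu_\eta > |J_\eta|$ everywhere. By construction $(\mu_\eta, J_\eta)$ is smooth on $M$ and lies in $C^{0,\alpha}_{-n-1-\ve_0}$, and standard cutoff-and-mollifier estimates give $(\mu_\eta, J_\eta)\to(\mu,J)$ in both $\mathcal L$ and $L^1$ as $\eta\to 0$.

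Given $\ve>0$, I would pick $\eta$ small enough that the $\mathcal L$- and $L^1$-distances from $(\mu_\eta,J_\eta)$ to $(\mu,J)$ are smaller than $\min(\delta_0/2,\ \ve/(2C))$, where $\delta_0, C$ are the constants supplied by Theorem \ref{prescribed2}, and then set $\delta := \min(\delta_0/2,\ \ve/(2C))$. For any $\tilde\theta\in C^{1,\alpha}(\partial M)$ with $\|\tilde\theta-\theta\|_{W^{1-\frac 1p,p}}<\delta$, Theorem \ref{prescribed2} produces $(\tilde g,\tilde\pi)\in\mathcal D$ whose constraints are exactly $(\mu_\eta, J_\eta)$ and whose null expansion on $\partial^\pm M$ is exactly $\tilde\theta$, with $\|(\tilde g,\tilde\pi)-(g,\pi)\|_\mathcal D<\ve$. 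The $C^{0,\alpha}_{-n-1-\ve_0}$-regularity of $(\mu_\eta,J_\eta)$ combined with the $C^{1,\alpha}$-regularity of $\tilde\theta$ invokes the last statement of Theorem \ref{prescribed2}, giving $(\tilde g,\tilde\pi)\in C^{2,\alpha}_{2-n}\times C^{1,\alpha}_{1-n}$ with harmonic asymptotics in each end. Strict DEC and $L^1$-closeness of constraints hold by construction. For arbitrarily fast decay $\tilde\mu+|\tilde J|\le f(|x|)$ on the end, note that outside $B_{2R(\eta)}$ one has $\tilde\mu=\eta\phi$ and $\tilde J=0$, so one simply chooses $\phi\le f$ there and takes $\eta\le 1$. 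For the vacuum-outside-a-compact-set alternative, I would drop the $\eta\phi$ term altogether; the resulting $(\mu_\eta,J_\eta)$ still obeys the (weak) DEC and vanishes identically outside $B_{2R(\eta)}$, so Theorem \ref{prescribed2} returns a $(\tilde g,\tilde\pi)$ that is vacuum outside a compact set.

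The main difficulty is simply arranging all the quantitative conditions—strict DEC, $C^{0,\alpha}_{-n-1-\ve_0}$-regularity, closeness to $(\mu, J)$ in both $\mathcal L$ and $L^1$, and prescribed tail decay—on $(\mu_\eta, J_\eta)$ at the same time. This works cleanly precisely because DEC is a pointwise convex inequality preserved under convolution, and because the auxiliary positive function $\phi$ is free to be tailored to the decay rate demanded by $f$. The genuinely new analytic obstacle introduced by the boundary was already absorbed into Theorem \ref{prescribed2} through the auxiliary boundary operators $B_1,B_2,B_3$ and the splitting subspaces $K_1, K_2$, and does not reappear here.
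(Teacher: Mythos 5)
Your plan to reduce everything to Theorem~\ref{prescribed2} by first building smooth, rapidly decaying target constraints $(\mu_\eta,J_\eta)$ with a strict DEC margin is attractive, and the observation that DEC survives convolution, cutoff by $\chi_R\ge 0$, and the addition of $\eta\phi$ is fine. The gap is in the sentence ``Strict DEC and $L^1$-closeness of constraints hold by construction.'' The prescribed constraints satisfy $\mu_\eta > |J_\eta|_g$ with respect to the \emph{original} metric $g$, but after applying Theorem~\ref{prescribed2} the resulting data $(\tilde g,\tilde\pi)$ has a \emph{different} metric $\tilde g$, and the DEC for $(\tilde g,\tilde\pi)$ reads $\mu_\eta \ge |J_\eta|_{\tilde g}$. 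Since $J$ is a vector field, $|J_\eta|_{\tilde g}$ is not equal to $|J_\eta|_g$, and so passing from one inequality to the other is a genuine comparison that must be made quantitatively. This is the whole difficulty of the DEC density theorem and is not absorbed into Theorem~\ref{prescribed2}, which only prescribes $(\mu,J)$ as raw fields and says nothing about DEC.

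Worse, the additive margin $\eta\phi$ is too weak to survive this comparison. From $|J_\eta|_{\tilde g}\le |J_\eta|_g\bigl(1+|\tilde g-g|_g^{1/2}\bigr)$, one needs $\eta\phi > |J_\eta|_g\, |\tilde g-g|_g^{1/2}$ pointwise. Theorem~\ref{prescribed2} gives $\|\tilde g - g\|_{W^{2,p}_{-q}} \lesssim \eta\|\phi\|_{\mathcal L}$, hence pointwise $|\tilde g-g|_g \lesssim \eta\|\phi\|_{\mathcal L}\, |x|^{-q}$ and $|J_\eta|_g\lesssim |x|^{-q-2}$. After cancelling the $\eta$'s (linear bound) the requirement becomes $\phi(x)\gtrsim \|\phi\|_{\mathcal L}\,|x|^{-2q-2}$ on the support of $J_\eta$. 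Since $\phi$ must lie in $C^{0,\alpha}_{-n-1-\ve_0}$ (to invoke the regularity clause of Theorem~\ref{prescribed2}), and since $q$ can be taken as close to $(n-2)/2$ as we like, we have $2q+2 < n+1+\ve_0$, so $\phi$ decays strictly faster than $|x|^{-2q-2}$ and the inequality fails on the annulus $R \le |x| \le 2R$ for $R$ large. Your vacuum-outside-a-compact-set variant, with $\eta\phi$ deleted entirely, has no margin at all and the DEC claim collapses immediately.

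The paper gets around this precisely by replacing your additive margin with a \emph{uniform multiplicative} gap $\mu > (1+\gamma)|J|_g$, obtained in Proposition~\ref{strictDEC}. That proposition is not just a technical convenience: it uses the \emph{modified} constraint operator $\overline\Phi_{(g,\pi)}$ together with the Corvino--Huang estimate (Lemma~\ref{CH}), which is designed exactly so that prescribing $\overline\Phi_{(g,\pi)}(\tilde g,\tilde\pi)=\overline\Phi_{(g,\pi)}(g,\pi)+(2\psi,0)$ forces $|\tilde J|_{\tilde g}\le |J|_g$, transferring the DEC across the metric change automatically. Once the multiplicative gap $\gamma$ is in hand, the cutoff-and-prescribe step via Theorem~\ref{prescribed2} works because the pointwise quantity $|\tilde g_\lambda - g|_g^{1/2}$ tends to zero uniformly as $\lambda\to\infty$ and so eventually beats the fixed constant $\gamma$. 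You should look at Proposition~\ref{strictDEC}, Proposition~\ref{modified_surj}, and Lemma~\ref{CH}; without an ingredient playing the role of the modified constraint operator, the DEC cannot be verified for the output of the prescribed-constraint theorem.
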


Given a fixed initial data set $(M^n,g,\pi)$, the \emph{modified constraint operator} $\overline\Phi_{(g,\pi)}$ is defined by 
\[\overline\Phi_{(g,\pi)}(\gamma,\tau)=\Phi(\gamma,\tau)+(0,\tfrac 12 g^{ij}\gamma_{jk}J^k)\]
for $(\gamma,\tau)\in\mathcal D$.

\begin{lem}[Corvino--Huang \cite{CH}]\label{CH}
Let $(g,\pi)$ and $(\tilde g,\tilde \pi)$ be initial data, and assume that 
\[\overline\Phi_{(g,\pi)}(\tilde g,\tilde\pi)-\overline\Phi_{(g,\pi)}(g,\pi)=(2\psi,0)\]
for some function $\psi$. If additionally $|\tilde g-g|_{g}\le 3$, then  
\[|\tilde J|_{\tilde g}\le |J|_{g}.\]
\end{lem}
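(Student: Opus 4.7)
The plan is to reduce the claim to a pointwise linear-algebra inequality. Comparing the second (vector) components of $\overline{\Phi}_{(g,\pi)}(\tilde g,\tilde\pi)$ and $\overline{\Phi}_{(g,\pi)}(g,\pi)$, and using $g^{ij}g_{jk}=\delta^i_k$ to simplify the correction term at the base point, the hypothesis collapses to the pointwise algebraic identity
\[
\tilde J^i \;=\; \tfrac{1}{2}\bigl(3\delta^i_k - g^{ij}\tilde g_{jk}\bigr)J^k.
\]
So the entire claim becomes a statement about the endomorphism $A := \tfrac12(3I - g^{-1}\tilde g)$ relating $\tilde J$ to $J$ together with the two inner products $g$ and $\tilde g$; no PDE, no differentiation, nothing global.

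Next, at each fixed point, I would diagonalize $\tilde g$ with respect to $g$: in a $g$-orthonormal frame chosen so that $\tilde g = \mathrm{diag}(\lambda_1,\dotsc,\lambda_n)$ with $\lambda_a>0$, the identity above becomes $\tilde J^a = \tfrac12(3-\lambda_a)J^a$ (no sum), whence
\[
|\tilde J|_{\tilde g}^2 \;=\; \sum_a \lambda_a (\tilde J^a)^2 \;=\; \tfrac{1}{4}\sum_a \lambda_a(3-\lambda_a)^2 (J^a)^2.
\]
Since $|J|_g^2 = \sum_a (J^a)^2$, it suffices to prove the scalar inequality $\lambda(3-\lambda)^2 \le 4$ for each eigenvalue $\lambda_a$.

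This scalar inequality is transparent from the factorization $\lambda(3-\lambda)^2 - 4 = (\lambda-1)^2(\lambda-4)$, which is nonpositive exactly when $\lambda \le 4$. The final step is to invoke the size hypothesis: in the same diagonalizing frame, $|\tilde g - g|_g^2 = \sum_a (\lambda_a - 1)^2 \le 9$, which forces $|\lambda_a - 1| \le 3$ for each $a$, hence $\lambda_a \le 4$ (positivity of $\tilde g$ automatically gives the lower bound $\lambda_a > 0$). Therefore $\lambda_a(3-\lambda_a)^2 \le 4$ for each $a$, and summing over $a$ yields $|\tilde J|_{\tilde g}^2 \le |J|_g^2$ at the point; since the point was arbitrary, the pointwise inequality holds everywhere. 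There is no real obstacle here beyond bookkeeping; the only mildly subtle feature is that the numerical constant $3$ in the hypothesis is sharp, matching precisely the root $\lambda=4$ of $(\lambda-1)^2(\lambda-4)$.
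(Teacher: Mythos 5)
The paper does not reproduce a proof of this lemma; it cites Corvino--Huang \cite{CH}, where the statement is established by exactly the reduction you carry out. Your proof is correct and essentially reproduces the standard argument: the vanishing of the second component of the modified-constraint difference yields the pointwise relation $\tilde J = \tfrac12(3I - g^{-1}\tilde g)J$; diagonalizing $\tilde g$ in a $g$-orthonormal frame reduces everything to the eigenvalue inequality $\lambda(3-\lambda)^2\le 4$, whose factorization $(\lambda-1)^2(\lambda-4)$ shows it holds precisely for $\lambda\le 4$; and the hypothesis $|\tilde g-g|_g\le 3$ together with positive-definiteness of $\tilde g$ confines every eigenvalue to $(0,4]$, so the inequality sums to give $|\tilde J|_{\tilde g}^2\le |J|_g^2$ pointwise. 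All the steps check, including the observation that the coefficient $3$ in the hypothesis is the sharp one matching the root $\lambda=4$.
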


The linearization of the modified constraint operator at $(g,\pi)$ is given by 
\[D\overline\Phi_{(g,\pi)}(h,w)= D\Phi|_{(g,\pi)}(h,w)+(0,\tfrac12 g^{ij}h_{jk}J^k) \]
The addtional zeroth order term $(0,\tfrac12 g^{ij}h_{jk}J^k)$ does not affect the proof of Proposition \ref{prop1}, and so we obtain the surjectivity result.
\begin{prop}\label{modified_surj}
$(D\overline\Phi_{(g,\pi)},D\Theta)|_{(g,\pi)}:T_{(g,\pi)}\mathcal D\to \mathcal L\times W^{1-\frac 1p,p}(\partial M)$ is surjective.
\end{prop}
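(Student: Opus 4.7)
The plan is to mimic the proof of Proposition \ref{prop1} essentially verbatim, keeping track of the extra zeroth-order term in the modified constraint operator. As in that proof, surjectivity of $D\Theta|_{(g,\pi)}$ is already established in Lemma \ref{boundary_surjective}, so it suffices to show that
\[D\overline\Phi_{(g,\pi)}|_{(g,\pi)}:\ker D\Theta|_{(g,\pi)}\to \mathcal L\]
is surjective.

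First I would show that the image $D\overline\Phi_{(g,\pi)}|_{(g,\pi)}[\ker D\Theta|_{(g,\pi)}]$ is closed with finite codimension in $\mathcal L$. The argument is the same as before: one verifies
\[DT|_{(1,0)}[\ker DB|_{(1,0)}]\subset D\overline\Phi_{(g,\pi)}|_{(g,\pi)}[\ker D\Theta|_{(g,\pi)}],\]
since the extra term $(0,\tfrac12 g^{ij}h_{jk}J^k)$ can be absorbed by modifying $(h,w)=D\Psi_{(g,\pi)}|_{(1,0)}(v,Z)$ (which already lies in $\ker D\Theta$) appropriately. The left-hand side is closed with finite codimension by Lemma \ref{Fredholm1}.

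Next, if the image is proper, the Hahn--Banach theorem produces a nontrivial $(\xi,V)\in \mathcal L^*=L^{p'}_{2+q-n}$ that annihilates it. Testing against $(h,w)\in T_{(g,\pi)}\mathcal D$ compactly supported away from $\partial M$ (which automatically lies in $\ker D\Theta|_{(g,\pi)}$) yields the distributional equation
\[D\overline\Phi_{(g,\pi)}|_{(g,\pi)}^*(\xi,V)=0.\]
Compared to \eqref{dual}, this differs only by a zeroth-order term involving $J$ in the adjoint of the map $(0,h)\mapsto (0,\tfrac12 g^{ij}h_{jk}J^k)$. The resulting system still has the same principal part as in the proof of Proposition \ref{prop1}, so by the same interior regularity argument of \cite[Appendix B]{HuangLee}, $(\xi,V)$ is $C^2$ in the interior and solves the equation classically.

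The main thing to check is that the bootstrapping-plus-unique-continuation argument of \cite{CorvinoSchoen} (as presented in \cite[Theorem 9.9]{Lee}) is robust under this zeroth-order perturbation. This is the only step where something could potentially go wrong, but it is straightforward: the zeroth-order coefficient (built from $J$) decays like $|x|^{-q-1}$ by asymptotic flatness, which is strictly faster than the critical rate required for the Hessian-type system argument. Thus the $L^{p'}$ decay of $(\xi,V)$ can still be bootstrapped to pointwise $C^1$ decay, which feeds back into the Hessian equations to give successively faster decay, and iterating produces infinite-order pointwise decay. Finally, either the unique continuation argument of \cite{CorvinoSchoen} or the ODE-along-rays argument of \cite[Lemma B.3]{HMM18} applies to conclude $(\xi,V)\equiv 0$, a contradiction. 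This yields the desired surjectivity.
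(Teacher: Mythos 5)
Your proposal follows the paper's approach exactly: the paper's entire proof is the one-line observation that ``the additional zeroth order term $(0,\tfrac12 g^{ij}h_{jk}J^k)$ does not affect the proof of Proposition~\ref{prop1},'' and you are filling in the details. Most of the unpacking is fine, and your discussion of why the bootstrapping argument tolerates the zeroth-order perturbation (decay rate $|x|^{-q-1}$ of the coefficient built from $J$) is exactly the right point to emphasize.

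There is, however, one step where your justification does not hold up: the inclusion
\[
DT|_{(1,0)}[\ker DB|_{(1,0)}]\subset D\overline\Phi_{(g,\pi)}|_{(g,\pi)}[\ker D\Theta|_{(g,\pi)}],
\]
which you claim follows because ``the extra term can be absorbed by modifying $(h,w)=D\Psi_{(g,\pi)}|_{(1,0)}(v,Z)$ appropriately.'' If you take $(v,Z)\in\ker DB$ and set $(h,w)=D\Psi_{(g,\pi)}|_{(1,0)}(v,Z)=\left(svg,\,-\tfrac32 sv\pi+\mathfrak L_gZ\right)$, the chain rule gives $D\overline\Phi_{(g,\pi)}(h,w)=DT(v,Z)+(0,\tfrac{s}{2}vJ)$, so you land at the wrong target. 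To ``absorb'' the discrepancy $(0,\tfrac{s}{2}vJ)$ you would have to find some $(h'',w'')\in\ker D\Theta$ with $D\overline\Phi(h'',w'')=(0,\tfrac s2 vJ)$, which is precisely an instance of the surjectivity you are trying to prove, so the reasoning is circular. The clean fix is not to claim this inclusion at all, but to replace $T$ by the modified conformal map $\overline T:=$ (first component of) $\overline\Phi_{(g,\pi)}\circ\Psi_{(g,\pi)}$. Its linearization is $D\overline T|_{(1,0)}=DT|_{(1,0)}+(0,\tfrac{s}{2}vJ)$, which has the same principal symbol; the elliptic estimate of Proposition~\ref{elliptic}(1) therefore still holds for $(D\overline T, DB)$ (the extra term $\tfrac s2 vJ$ can be split into a part over a large ball controlled by $\|v\|_{L^p(U)}$ and a tail controlled by $R^{-q}\|v\|_{W^{2,p}_{-q}}$ using the decay of $J$), and Lemma~\ref{Fredholm1} goes through verbatim for $(D\overline T, DB_1)$ restricted to $\ker DB_2\cap\ker DB_3$. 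The chain rule then gives $D\overline T[\ker DB]\subset D\overline\Phi_{(g,\pi)}[\ker D\Theta]$ directly, and the rest of your argument is correct as written.
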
 
By Proposition \ref{modified_surj}, we can use the construction of Lemma \ref{main} to construct the analogous subspaces $K_1,K_2$. Using this and the inverse function theorem argument in Theorem \ref{prescribed2}, we show that it is possible to perturb the initial data to strict DEC. Proposition \ref{strictDEC} does not attempt to produce harmonic asymptotics, which we get to in Theorem \ref{DECdensity2}. 

\begin{prop}[Perturbing to strict DEC]\label{strictDEC}
Let $(M^n,g,\pi)$ be a complete asymptotically flat initial data set satisfying the dominant energy condition $\mu \ge |J|$ and with $\Theta(g,\pi)=\theta$. Let $p>n$ and $\frac{n-2}{2}<q<n-2$ be strictly less than the decay rate of $(g,\pi)$. For any $\ve>0$ there exist constants $\delta>0$ and $\gamma>0$ so that the following is true:

For any $\tilde\theta \in C^{1,\alpha}(\partial M)$ satisfying $\|\tilde\theta -\theta\|_{W^{1-\frac 1p,p}}<\delta$ there exists an asymptotically flat initial data set $(\tilde g,\tilde \pi)$, $C^{2,\alpha}\times C^{1,\alpha}$-regular up to the boundary, with  $\Theta(\tilde g,\tilde \pi)=\tilde \theta$, that satisfies the following ``uniform'' strict dominant energy condition
\[\tilde \mu >(1+\gamma)|\tilde J|_{\tilde g},\]
as well as the estimates
\[\|(\tilde g,\tilde\pi)-(g,\pi)\|_{\mathcal D}<\ve\quad\text{and}\quad \|(\tilde \mu,\tilde J)-(\mu,J)\|_{L^1}<\ve.\] 
\end{prop}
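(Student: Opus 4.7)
The plan is to run the inverse function theorem argument of Theorem~\ref{prescribed2} with two modifications: replace $\Phi$ by the modified constraint operator $\overline\Phi_{(g,\pi)}$ of Corvino--Huang, and drop the cutoff in $\lambda$ since harmonic asymptotics is not required here. Concretely, I would form the operator
$$\hat{\overline{\mathcal P}}((u,Y),(h,w)):=(\overline\Phi_{(g,\pi)},\Theta)\bigl[\Psi_{(g,\pi)}(u,Y)+(h,w)\bigr]$$
on $[(1,0)+K_1]\times K_2$ with values in $\mathcal L\times W^{1-\frac 1p,p}(\partial M)$. Lemma~\ref{main} should go through verbatim to produce the splitting $K_1,K_2$ and an isomorphism $D\hat{\overline{\mathcal P}}$ at the base point, with uniform inverse and Lipschitz bounds; the only change is that Proposition~\ref{modified_surj} replaces Proposition~\ref{prop1} in the construction of $K_2$. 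The zeroth-order modification $(0,\tfrac 12 g^{ij}h_{jk}J^k)$ does not affect the ellipticity of Proposition~\ref{elliptic} nor the Fredholm argument of Lemma~\ref{Fredholm1}, so nothing genuinely new needs to be proved at this stage.

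Next I would design the target so that strict DEC falls out of Lemma~\ref{CH}. Fix once and for all a strictly positive smooth function $\chi$ on $M$ lying in $\mathcal L\cap L^1(M)$ of small norm (for example, a smooth positive bump glued to a Gaussian in the end). Given $\varepsilon>0$, choose $\gamma>0$ small and set
$$\psi:=\gamma(\mu+\chi),$$
which is strictly positive, Hölder continuous locally (since $(g,\pi)\in C^{2,\alpha}\times C^{1,\alpha}$), and satisfies $\|\psi\|_{\mathcal L}+\|\psi\|_{L^1}=O(\gamma)$. For $\gamma$ and $\delta$ small enough that the data lie in the radius of invertibility produced by the quantitative inverse function theorem, solve
$$\overline\Phi_{(g,\pi)}(\tilde g,\tilde\pi)=\overline\Phi_{(g,\pi)}(g,\pi)+(2\psi,0),\qquad \Theta(\tilde g,\tilde\pi)=\tilde\theta.$$
This yields $(\tilde g,\tilde\pi)\in\mathcal D$ with $\|(\tilde g,\tilde\pi)-(g,\pi)\|_{\mathcal D}\lesssim \|\psi\|_{\mathcal L}+\|\tilde\theta-\theta\|_{W^{1-1/p,p}}<\varepsilon$. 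The $C^{2,\alpha}\times C^{1,\alpha}$ regularity up to the boundary follows from the elliptic regularity argument given in the proof of Theorem~\ref{prescribed2}, since the regularity of the prescribed data is unaffected by the lower-order modification.

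Finally I would verify the uniform strict DEC and the $L^1$ estimate. Since $\overline\Phi_{(g,\pi)}(\tilde g,\tilde\pi)-\overline\Phi_{(g,\pi)}(g,\pi)=(2\psi,0)$ by construction, Lemma~\ref{CH} gives $|\tilde J|_{\tilde g}\le |J|_g\le\mu$; the size hypothesis $|\tilde g-g|_g\le 3$ is automatic since $W^{2,p}\hookrightarrow L^\infty$ for $p>n$ and $\tilde g-g$ is small. On the other hand $\tilde\mu=\mu+\psi=(1+\gamma)\mu+\gamma\chi$, so
$$\tilde\mu-(1+\gamma)|\tilde J|_{\tilde g}\ge (1+\gamma)\mu+\gamma\chi-(1+\gamma)\mu=\gamma\chi>0$$
pointwise on $M$, which is the desired uniform strict DEC with gap $\gamma$. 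For the $L^1$ bound, the $\mu$-component is $\|\psi\|_{L^1}=O(\gamma)$; the $J$-component is controlled by solving the second-coordinate identity $\tilde J^i+\tfrac 12 g^{ij}\tilde g_{jk}J^k=\tfrac 32 J^i$ for $\tilde J-J$, which gives the pointwise bound $|\tilde J-J|\lesssim|\tilde g-g|\,|J|$ and hence $\|\tilde J-J\|_{L^1}\lesssim\|\tilde g-g\|_{L^\infty}\|J\|_{L^1}$, small by Morrey embedding. The main obstacle is really the bookkeeping involved in redoing Lemma~\ref{main} for the modified operator; once that is set up, the uniform gap is essentially automatic from the Corvino--Huang identity together with the positive bump $\chi$.
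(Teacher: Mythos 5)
Your proposal is correct and follows essentially the same strategy as the paper: build the inverse-function-theorem machinery of Lemma~\ref{main} for the modified constraint operator $\overline\Phi_{(g,\pi)}$ (dropping the $\lambda$-cutoff, which is only needed for harmonic asymptotics), prescribe a strictly positive $\mu$-source while leaving the $J$-component zero, and then read off the uniform strict DEC from the Corvino--Huang Lemma~\ref{CH}. The only cosmetic difference is that you take the source to be $\gamma(\mu+\chi)$ rather than the paper's $t(f+|J|_g)$; both yield the same chain of inequalities via $|\tilde J|_{\tilde g}\le|J|_g\le\mu$.
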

\begin{proof}
Let $f$ be a smooth positive function on $M$ decaying exponentially at infinity.  By essentially repeating the proof of Lemma \ref{main}, we can construct subspaces $K_1\subset \ker DB_2\cap \ker DB_3$ and $K_2\subset T_{(g,\pi)}\mathcal D$ (consisting of compactly supported smooth functions) to define an operator
\begin{align*}
\hat{\mathcal P}:[(1,0)+K_1]\times K_2&\to \mathcal L\times W^{1-\frac 1p,p}(\partial M)\\
((u,Y),(h,w))&\mapsto (\overline\Phi_{(g,\pi)},\Theta)[\Psi_{(g,\pi)}(u,Y)+(h,w)]
\end{align*}
whose differential at $((1,0),(0,0))$ is an isomorphism. We now proceed as in the proof of Theorem~\ref{prescribed2}. In particular, we use the inverse function theorem \cite[Theorem A.43]{Lee} to solve 
\begin{equation}\label{strictPDE}
    \hat{\mathcal{P}}((u_t,Y_t),(h_t,w_t))=\left( \overline{\Phi}_{(g,\pi)}(g,\pi)+(2t( f+|J|_g),0), \,\tilde{\theta} \right)
\end{equation} for $((u_t,Y_t),(h_t,w_t))$,
which is possible for sufficiently small $\delta>0$ and $t>0$. 

Arguing as in the proof of Theorem \ref{prescribed2}, we can show that $(u_t,Y_t)$ decays enough so that $(\tilde g_t, \tilde\pi_t)=\Psi_{(g,\pi)}(u_t,Y_t)+(h_t,w_t)$ is asymptotically flat, and $(u_t, Y_t)$ is $C^{2,\alpha}$ up to the boundary. 

We now see about the other claims in the proposition. First, we claim that $(\tilde g_t,\tilde\pi_t)$ satisfies the strict DEC for $t$ small enough. By the Sobolev inequality, we may assume $\tilde g_t-g$ is uniformly small pointwise. Therefore, by Lemma \ref{CH}, 
\[\tilde \mu_t =\mu+t(f+|J|_g)>\mu +t|J|_g \ge (1+t)|J|_g\ge (1+t)|\tilde J_t|_{\tilde g_t}.\] Finally, we need to show that $(\tilde \mu_t,\tilde J_t)\to (\mu,J)$ in $L^1$. Since $f+|J|_g\in L^1$, it is clear from the equation $\tilde \mu_t=\mu+t(f+|J|_g)$ that $\tilde \mu_t\to \mu$. For the momentum we have $\tilde J_t^i-J^i=\frac 12 g^{ij}(\tilde g_t-g)_{jk}J^k$, but $\tilde g_t\to g$ uniformly, so $\tilde J_t\to J$. We take $\gamma$ to be the $t$ satisfying these conditions, which completes the proof.  
\end{proof}

Finally, we prove Theorem \ref{DECdensity2}.
\begin{proof}[Proof of Theorem \ref{DECdensity2}] First, perturb the data set according to Proposition \ref{strictDEC} to ensure the strict DEC holds in the form $\mu >(1+\gamma)|J|_g$  everywhere on $M$. Call the perturbed data set $(g,\pi)$ for simplicity. Let $\chi_\lambda$ be the family of cutoff functions used in Theorem \ref{prescribed2} and $f$ a rapidly decaying positive function on $M$. By Theorem \ref{prescribed2}, we may construct a data set $(\tilde g_\lambda,\tilde \pi_\lambda)$ with harmonic asymptotics satisfying 
\[(\Phi,\Theta)(\tilde g_\lambda,\tilde \pi_\lambda)=\left(\chi_\lambda \Phi(g,\pi)+\tfrac 2\lambda (f,0),\,\tilde\theta\right)\]
if $\lambda$ is large enough and $\delta$ is small enough. Note that since we can choose $f$ to decay as rapidly as we like, our prescribed $(\tilde\mu,\tilde J)$ will certainly lie in $C^{0,\alpha}_{-n-1-\ve}$.

The only thing left to check is that the strict DEC is satisfied:
\begin{align*}
    \tilde \mu_\lambda-\frac 1\lambda f&=\chi_\lambda\mu\\
    &\ge \chi_\lambda(1+\gamma)|J|_g\\
    &\ge \chi_\lambda \left(|J|_{\tilde g_\lambda} -|\tilde g_\lambda -g|^\frac 12_g|J|_g+\gamma |J|_g\right)\\
    &= |\tilde J_\lambda|_{\tilde g_\lambda} +\chi_\lambda |J|_g(\gamma-|\tilde g_\lambda -g|^\frac 12_g).
\end{align*}
For $\lambda$ large, $|\tilde g_\lambda -g|^\frac 12_g<\gamma$ by the estimates in Theorem \ref{prescribed2}, so we then have 
\[\tilde \mu_\lambda -|\tilde J_\lambda|_{\tilde g_\lambda}\ge \frac 1\lambda f,\]
which implies the strict DEC. Note that $f$ also controls the decay of the DEC scalar $\tilde \mu_\lambda -|\tilde J_\lambda|_{\tilde g_\lambda}$, as it is identically equal to $\frac 1\lambda f$ for $|x|\ge 2\lambda$. 

If we instead wish to prescribe vacuum outside a compact set, we perform the same argument as above but with $f\equiv 0$. 
\end{proof}

\section{Positive mass theorem with boundary}\label{S4}

\subsection{Proof of the inequality $E\ge |P|$}\label{S4.inequality} 


In this subsection we explain how Theorem~\ref{PMT} follows from combining Theorem~\ref{DECdensity2}  with the proof of the boundaryless case from~\cite{EHLS}. 
Suppose that there exists a complete asymptotically flat initial data set $(M, g, \pi)$ satisfying the DEC, whose compact boundary is made up of components which are either weakly outer trapped 
($\theta^+\le0$) or weakly inner untrapped ($\theta^-\ge0$) with respect to the normal pointing into $M$, such that $E<|P|$. By Theorem \ref{DECdensity2}, we can perturb $(g,\pi)$ to new initial data $(\tilde{g}, \tilde{\pi})$ that has harmonic asymptotics, satisfies the strict DEC, and has compact boundary made up of components that have either $\theta^+<0$ or $\theta^->0$, while maintaining the inequality  $\tilde{E}<|\tilde{P}|$. From here the exact same argument as in \cite{EHLS} (after the application of the density theorem there) results in a contradiction. The only thing to note is that the boundary acts as a barrier for the MOTS ($\theta^+=0$ hypersurfaces) that are constructed in the proof. This part of the proof is also identical to the reasoning used in \cite{GallowayLee}. (Note that the H\"{o}lder decay assumption on $(\mu, J)$ in \cite{EHLS} is unnecessary, as can be seen from our proof and was observed in \cite{Lee, GallowayLee}.) 

To be more precise, in \cite{EHLS}, one seeks to construct a stable MOTS hypersurface in $M$ with prescribed boundary equal to a large sphere $\Gamma^{n-2}$ of constant height on a large cylinder $C$ (with smoothed corners). Theorem 1.1 of \cite{E09} states that this is possible if one can find a compact $\Omega$ such that $\Gamma\subset\partial\Omega$ divides $\partial\Omega$ into $\partial_1\Omega$ and $\partial_2\Omega$ such that $\theta^+_{\partial_1\Omega}>0$ with respect to the normal pointing \emph{out of} $\Omega$, and $\theta^+_{\partial_2\Omega}<0$ with respect to the normal pointing \emph{into} $\Omega$. If $M$ has one end and no boundary, then we choose $\Omega$ to be the region enclosed by $C$, $\partial_1\Omega$ to be the part of $C$ lying above $\Gamma$, and $\partial_2\Omega$ to be the part of $C$ lying below $\Gamma$. Harmonic asymptotics guarantee that if $C$ is big enough, these choices satisfy the hypotheses on $\theta^+$ needed to apply \cite[Theorem 1.1]{E09}. If there are multiple ends, then we choose $\Omega$ to be enclosed by $C$ in the end of interest and large celestial spheres in all other ends. Those celestial spheres have $\theta^+<0$ with respect to the normal pointing into $\Omega$, and hence those spheres can be included as part of $\partial_2\Omega$. Finally, we come to the case of interest where $M$ has a boundary. We define $\Omega$ the same way, except now we can treat any $\theta^+<0$ components of $\partial M$ as part of $\partial_2 \Omega$ while treating any $\theta^->0$ components of $\partial M$ as part of $\partial_1\Omega$, because the condition $\theta^->0$ with respect to the normal pointing into $M$ is equivalent to the condition $\theta^+>0$ with respect to the normal pointing out of $\Omega$. $\hfill\qed$

\subsection{The equality case $E=|P|$}\label{S4.1}

Here we explain how the arguments in \cite{HuangLee} can be adapted to handle a boundary, using the results of this paper. Explicitly, we prove the following theorem, which constitutes the first part of the proof of Theorem~\ref{strictPMT}.

\begin{thm}\label{ADMnull}
Let $n\ge 3$, $p>n$, $q> \tfrac{n-2}{2}$, and $0<\alpha<1$, and assume that 
\begin{align}\label{equation:extra}
	q+\alpha >n-2.
\end{align}
Suppose that $(M^n, g, \pi)$ is a complete asymptotically flat initial data set with boundary (in the sense defined in Definition~\ref{AFdata}) with the stronger decay assumption that 
\begin{align}\label{equation:AF}
	g - \overline{g}& \in C^{2,\alpha}_{-q}(T^*M\odot T^*M) \\
	\pi&\in  C^{1,\alpha}_{-1-q}(TM\odot TM).
\end{align}
Then if $(M, g, \pi)$ satisfies the DEC, each component of $\partial M$ is either weakly outer trapped or weakly inner untrapped, and its ADM energy-momentum satisfies $E=|P|$, then $E=|P|=0$.
\end{thm}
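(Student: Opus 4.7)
The plan is to adapt, in the presence of a boundary, the argument of Huang and the first author for the $\partial M = \emptyset$ case in \cite{HuangLee}. The overall strategy is a proof by contradiction: assume $E = |P| > 0$ (the case $E = 0$ already gives $|P| = 0$ from $E = |P|$) and produce a DEC perturbation of $(g, \pi)$ preserving the boundary type which violates $\tilde{E} \ge |\tilde{P}|$ from Theorem \ref{PMT}. To begin I would apply Theorem \ref{DECdensity2} to replace $(g, \pi)$ by a data set $(g_1, \pi_1)$ with harmonic asymptotics in each end, strict DEC, and the same weakly outer trapped / weakly inner untrapped structure on $\partial M$, with $(E_1, P_1)$ arbitrarily close to $(E, P)$. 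The stronger asymptotic hypothesis $q + \alpha > n - 2$ is preserved by the density theorem, and the resulting harmonic expansions \eqref{uexp}--\eqref{Yexp} provide the precise asymptotic control of the ADM quantities that drives the variational argument in \cite{HuangLee}.

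Next I would run the \cite{HuangLee} construction, which produces a compactly supported infinitesimal deformation $(h, w)$ tangent to the DEC constraint set at $(g_1, \pi_1)$ with $\tfrac{d}{ds}\big|_{s=0}(E_1 - |P_1|)(s) < 0$, and then integrates this to a finite one-parameter family $(g_1(s), \pi_1(s))$ of DEC data. The existence of $(h, w)$ in that argument rests on the surjectivity of the linearized modified constraint operator acting on compactly supported sections in a chosen asymptotic shell $\{R \le |x| \le 2R\}$---precisely the content of Proposition \ref{modified_surj} in our setting, combined with density of smooth compactly supported sections in the relevant weighted Sobolev space. The asymptotic computation showing $\tfrac{d}{ds}(E_1 - |P_1|) < 0$ is a purely local calculation on a single end using the expansions \eqref{uexp}--\eqref{Yexp}, and is insensitive to the presence of $\partial M$.

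The main obstacle I foresee is verifying that the \cite{HuangLee} localization can be performed so as not to disturb the boundary data. However, $\partial M$ is compact, so for $R$ sufficiently large the shell $\{R \le |x| \le 2R\}$ lies in a single asymptotic end and is automatically disjoint from $\partial M$. Consequently $(h, w)$, and hence the integrated family $(g_1(s), \pi_1(s))$, vanishes in a neighborhood of $\partial M$, so $\Theta(g_1(s), \pi_1(s)) \equiv \Theta(g_1, \pi_1)$ and the weakly trapped / untrapped structure is preserved. For small $s > 0$ one would then have $E_1(s) < |P_1(s)|$ with $(g_1(s), \pi_1(s))$ satisfying all hypotheses of Theorem \ref{PMT}, contradicting that theorem. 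This contradiction forces $E = |P| = 0$, as desired.
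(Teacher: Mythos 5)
There is a fundamental logical gap in the proposed deformation argument. You claim that the Huang--Lee construction \cite{HuangLee} produces a compactly supported deformation $(h,w)$, tangent to the DEC constraint set, with $\tfrac{d}{ds}\big|_{s=0}(E - |P|) < 0$, and that integrating this yields data satisfying the hypotheses of Theorem \ref{PMT} but with $E < |P|$, a contradiction. No such deformation can exist: by Theorem \ref{PMT} (together with the Sobolev approximation result Lemma \ref{SobolevPMT}), one has $E(\gamma,\tau) \ge |P(\gamma,\tau)|$ for \emph{every} initial data set $(\gamma,\tau)$ near $(g,\pi)$ lying on the level set $\mathfrak{C}_{(g,\pi)} = \{\overline\Phi_{(g,\pi)}(\gamma,\tau) = \overline\Phi_{(g,\pi)}(g,\pi),\ \Theta(\gamma,\tau) = \Theta(g,\pi)\}$, so when $E = |P|$ the data $(g,\pi)$ is a \emph{local minimizer} (of the modified Regge--Teitelboim Hamiltonian $\mathcal H$) over $\mathfrak C_{(g,\pi)}$, and there is no decreasing direction at first order. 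The Huang--Lee argument runs in precisely the opposite direction: it exploits this local-minimizer property, together with the surjectivity of $(D\overline\Phi_{(g,\pi)}, D\Theta)|_{(g,\pi)}$ from Proposition \ref{modified_surj}, to invoke a Lagrange multiplier theorem. Testing the resulting first-order condition against smooth $(h,w)$ compactly supported away from $\partial M$ yields a distributional solution $(f_1,X_1)$ of $D\overline\Phi_{(g,\pi)}^*(f_1,X_1) = -D\overline\Phi_{(g,\pi)}^*(f_0,X_0)$; after elliptic bootstrapping, $(f,X) := (f_0,X_0)+(f_1,X_1)$ solves the KID-type equation $D\overline\Phi_{(g,\pi)}^*(f,X)=0$ in the interior of $M$ and is asymptotic to $(E,-2P)$ with $C^{2,\alpha}_{-q}$ decay. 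The conclusion $E=|P|=0$ then follows from the Beig--Chru\'sciel asymptotics theorem for KIDs \cite[Theorem A.2]{HuangLee}, and this is where the stronger hypothesis $q+\alpha>n-2$ is actually used.

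A separate problem: applying Theorem \ref{DECdensity2} as a preliminary step destroys the hypothesis $E=|P|$, since after the perturbation one only retains approximate equality, which is useless for a rigidity argument. The paper's proof works directly with $(g,\pi)$ and does not invoke the density theorem in this section at all. The only approximation needed appears inside Lemma \ref{SobolevPMT}, where one must extend the inequality $E\ge|P|$ to Sobolev-regular elements of $\mathfrak{C}_{(g,\pi)}$; crucially, the approximating sequence $(\bar\gamma_k,\bar\tau_k)$ there is constructed (via the inverse function theorem for $\hat{\mathcal P}_{(\gamma,\tau)}$) so as to preserve $(\overline\Phi_{(g,\pi)}, \Theta)$ \emph{exactly}, not merely up to $\varepsilon$.
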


As explained in \cite{HuangLee2}, in the case without boundary, the theorem is actually false without the stronger decay assumption.

\begin{proof}
Assume that $(M, g, \pi)$ satisfies the hypotheses of Theorem~\ref{ADMnull}. The basic strategy is the following: 
Using the first part of Theorem~\ref{PMT}, we can see that $(g,\pi)$ minimizes a ``modified Regge--Teitelboim Hamiltonian'' among all nearby initial data sets that have the same values of $(\overline{\Phi}_{(g,\pi)}, \Theta)$. By Proposition \ref{modified_surj}, $(D\overline{\Phi}_{(g,\pi)}, D\Theta)|_{(g,\pi)}$ is surjective, and hence we can apply Lagrange multipliers. These Lagrange multipliers give rise to a solution $(f,X)$ of the adjoint equations $D\overline{\Phi}_{(g,\pi)}|_{(g,\pi)}^* (f, X) =0$ such that $(f, X)$ is asymptotic to the constant $(E, -2P)$. Once we have that, a result of Beig and Chru\'{s}ciel~\cite{BeigChrusciel} (see also~\cite[Theorem A.2]{HuangLee}) implies that $E=|P|=0$. 

For the analysis that follows, select $q\in (\tfrac{n-2}{2}, n-2)$ that is smaller than the $q$ in statement of Theorem~\ref{ADMnull}.
Let $(f_0, X_0)$ be a function and a vector field on $M$ such that $(f_0, X_0)$ is smooth, supported in the asymptotically flat coordinate chart, and exactly equal to the constant $(E,-2P)$ outside some compact set, where $(E, P)$ denotes the fixed ADM energy-momentum of $(g,\pi)$. We define the \emph{modified Regge--Teitelboim Hamiltonian} $\mathcal{H} :\mathcal{D} \to \mathbb{R} $ corresponding to $(g,\pi)$ by 
\begin{align}\label{equation:functional}
	\mathcal{H} (\gamma, \tau) =2(n-1)\omega_{n-1} \left[ E\cdot E(\gamma,\tau) -P\cdot P(\gamma, \tau)\right] - \int_M \overline{\Phi}_{(g,\pi)}(\gamma, \tau) \cdot (f_0, X_0)\, d\mu_g,
\end{align}
for all $(\gamma,\tau)\in\mathcal{D}$,
where the volume measure $d\mu_g$ and the inner product in the integral are both with respect to~$g$. Although $E(\gamma,\tau)$, $P(\gamma, \tau)$, and the integral need not exist for elements $(\gamma, \tau)\in \mathcal{D}$ whose constraints are not integrable, the expression $\mathcal{H} (\gamma, \tau)$ can be given meaning by using the alternative formula:
\begin{align}\label{equation:H}
\begin{split}
	\mathcal{H} (\gamma, \tau) &=\int_M \left[\left(\Div_g [\Div_{\overline g} \gamma - d(\mathrm{tr}_{\overline g} \gamma) ], \Div_g \tau\right)- \Phi(\gamma, \tau)- \left(0, \tfrac{1}{2} \gamma \cdot J\right) \right] \cdot (f_0, X_0)\, d\mu_g\\
	&\quad + \int_M \left( [\Div_{\overline g} \gamma - d(\mathrm{tr}_{\overline g} \gamma) ], \tau \right)\cdot (\nabla f_0, \nabla X_0) \, d\mu_g,
	\end{split}
\end{align}
where $\overline g$ is a globally defined background metric that is Euclidean in the asymptotically flat end. As in~\cite{HuangLee}, we can compute the linearization $D\mathcal{H}|_{(g,\pi)}: T_{(g,\pi)}\mathcal{D} \to \mathbb{R}$ to be
\begin{equation}\label{equation:DH}
	 D\mathcal{H}|_{(g,\pi)} (h, w) =- \int_M (h, w) \cdot (D\overline{\Phi}_{(g,\pi)}|_{(g,\pi)})^*(f_0, X_0) \, d\mu_g,
\end{equation}
for all $(h,w)\in T_{(g,\pi)}\mathcal{D}$. Note that $\partial M$ can be ignored in all of these formulae because $(f_0, X_0)$ vanishes near $\partial M$.

Next we define a constraint space
\[
	\mathfrak{C}_{(g,\pi)} = \left\{ (\gamma, \tau) \in \mathcal{D}: \overline{\Phi}_{(g,\pi)}(\gamma, \tau) = \overline{\Phi}_{(g,\pi)}(g, \pi)\text{ and } \Theta(\gamma, \tau) = \Theta(g, \pi)\right\}.
\]
By our assumptions on $(g, \pi)$, each data set $(\gamma,\tau)$ in $\mathfrak{C}_{(g,\pi)}$ has weakly outer trapped or inner untrapped boundary components, and thanks to Lemma~\ref{CH}, it also satisfies the DEC. Moreover, since $(\gamma,\tau)$ has the same modified constraints as $(g,\pi)$, it also follows that its constraints are integrable. 
Then Theorem~\ref{PMT} implies that if $(\gamma,\tau)$ is near enough to $(g,\pi)$, then $E(\gamma,\tau)\ge |P(\gamma,\tau)|$. (We will discuss this more below. See Lemma~\ref{SobolevPMT}.) From this, we see that $(g,\pi)$ locally minimizes $\mathcal{H}$ on $\mathfrak{C}_{(g,\pi)}$ since 
 \begin{multline*}   
	E\cdot E(\gamma,\tau) - P \cdot P(\gamma, \tau) \ge E \cdot E(\gamma,\tau) - |P||P(\gamma,\tau)| \\
	= E( E(\gamma,\tau) - |P(\gamma, \tau)|) \ge 0 = E\cdot E(g, \pi) - P \cdot P(g, \pi),
\end{multline*}
and the integral term in \eqref{equation:functional} is constant over $\mathfrak{C}_{(g,\pi)}$.

In other words, $(g,\pi)$ locally minimizes $\mathcal{H}$ over a level set of $(\overline{\Phi}_{(g,\pi)} , \Theta):\mathcal{D}\to \mathcal L\times W^{1-\frac 1p,p}(\partial M)$, so by surjectivity of its linearization (Proposition \ref{modified_surj}), there exist Lagrange multipliers (see \cite[Appendix D]{HuangLee}) $(f_1, X_1)\in \mathcal{L}^*$, and $\lambda\in W^{1-\frac 1p,p}(\partial M)^*$ such that for all $(h,w)\in T_{(g,\pi)}\mathcal{D}$, 
\[
		\left. D\mathcal{H} \right|_{(g, \pi)} (h, w)  =\int_M (f_1, X_1) \cdot D\overline{\Phi}_{(g,\pi)}|_{(g,\pi)} (h, w) \, d\mu_g + \int_{\partial M} \lambda \cdot D\Theta|_{(g,\pi)}(h,w).
\]
Combining this with~\eqref{equation:DH} and by choosing $(h,w)$ to be arbitrary smooth test data that is compactly supported away from $\partial M$, we see that $(f_1, X_1)$ must be a solution (in the distributional sense) of 
\[ D\overline{\Phi}_{(g,\pi)}|^*_{(g,\pi)} (f_1, X_1) =  - D\overline{\Phi}_{(g,\pi)}|^*_{(g,\pi)} (f_0, X_0) 
\]
in the interior of $M$. As argued in the proof of Proposition~\ref{prop1}, ellipticity of $DT|_{(1,0)}$ implies that $(f_1, X_1)$ is actually smooth in the interior of $M$. Moreover, using the initial decay from being in $\mathcal{L}^*$ together with elliptic estimates, it follows that $(f_1, X_1)$ has $C^{2,\alpha}_{-q}$ decay. (See~\cite[Proposition B.4]{HuangLee} for details.) 
Thus $(f,X):=(f_0, X_0)+(f_1, X_1)$ solves
\[ D\overline{\Phi}_{(g,\pi)}|^*_{(g,\pi)} (f, X) = 0
\]
in the interior of $M$, and $(f, X)-(E, -2P)$ has $C^{2,\alpha}_{-q}$ decay. The result now follows from Theorem A.2 of \cite{HuangLee}. Note that it does not matter what what $(f, X)$ does near the boundary $\partial M$ since Theorem A.2 of \cite{HuangLee} is only a statement about asymptotics and makes no global assumptions. 
\end{proof}

There is one step in the proof above that requires further justification. We claimed that elements $(\gamma,\tau)$ of $\mathfrak{C}_{(g,\pi)}$ must satisfy $E(\gamma,\tau)\le |P(\gamma, \tau)|$, but the problem is that $(\gamma, \tau)$ may only have Sobolev regularity and decay, but our positive mass theorem (Theorem~\ref{PMT}) requires at least $C^{2,\alpha}\times C^{1,\alpha}$ local regularity as well as pointwise decay. Although we do not have a positive mass theorem for initial data in $\mathcal D$ with integrable constraints, we can at least prove it for data that is \emph{near} the smooth data $(g,\pi)$. This is the same idea that was used in \cite[Theorem 4.1]{HuangLee}.

\begin{lem}[Sobolev version of positive mass inequality, with boundary]\label{SobolevPMT}
Let $3\le n\le 7$, and let $(M^n, g, \pi)$ be a complete asymptotically flat manifold, as in Definition \ref{AFdata}, satsifying the DEC and  with compact boundary such that each component is either weakly outer trapped or weakly inner untrapped. Let $p>n$ and let $q\in (\tfrac{n-2}{2}, n-2)$ be smaller than the assumed  asymptotic decay rate of $(g,\pi)$. Then there is an open ball $U\subset\mathcal{D}$ containing $(g, \pi)$ such that if $(\gamma, \tau)\in U$, $\overline{\Phi}_{(g,\pi)}(\gamma,\tau) = \overline{\Phi}_{(g,\pi)}(g, \pi)$, and $\Theta(\gamma, \tau)=\Theta(g,\pi)$, then 
\[
	E(\gamma, \tau) \ge |P(\gamma, \tau)|. 
\] 
\end{lem}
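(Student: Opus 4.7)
The plan is to verify that $(\gamma,\tau)$ already satisfies the qualitative hypotheses of Theorem~\ref{PMT}, then to reduce to an application of Theorem~\ref{PMT} by approximating $(\gamma,\tau)$ in $\mathcal{D}$ by a sequence of classically regular initial data sets with harmonic asymptotics, and finally to pass to the limit using a bulk representation of the ADM energy-momentum.

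First I would check that $(\gamma,\tau)$ itself satisfies the DEC and the boundary trapping condition. By shrinking $U$, Morrey embedding gives pointwise smallness of $\gamma-g$, so in particular $|\gamma-g|_g\le 3$ as required by Lemma~\ref{CH}. The assumption $\overline{\Phi}_{(g,\pi)}(\gamma,\tau)=\overline{\Phi}_{(g,\pi)}(g,\pi)$ gives $\mu_\gamma=\mu$, and Lemma~\ref{CH} then yields $|J_\gamma|_\gamma\le|J|_g\le\mu=\mu_\gamma$. The assumption $\Theta(\gamma,\tau)=\theta$ preserves the sign conditions $\theta^+\le 0$ on $\partial^+M$ and $\theta^-\ge 0$ on $\partial^-M$. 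Moreover $\mu_\gamma=\mu\in L^1$ and $J_\gamma = \tfrac{3}{2}J - \tfrac{1}{2}\gamma\cdot J \in L^1$ (using $J\in L^1$ and pointwise boundedness of $\gamma$), so the ADM energy-momentum of $(\gamma,\tau)$ is unambiguously defined via the bulk-integral formula~\eqref{equation:H}.

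Next I would construct a sequence of $C^{2,\alpha}\times C^{1,\alpha}$ initial data sets $(\hat g_n,\hat\pi_n)$ with harmonic asymptotics, satisfying DEC, having boundary expansion identically equal to $\theta$, with $(\hat g_n,\hat\pi_n)\to(\gamma,\tau)$ in $\mathcal{D}$ and $(\hat\mu_n,\hat J_n)\to(\mu_\gamma,J_\gamma)$ in $L^1$. The approach is a two-step approximation: first, mollify and cut off $\gamma-g$ and $\tau-\pi$ in the interior to obtain data $(\gamma_n,\tau_n)$ that is classically asymptotically flat in the sense of Definition~\ref{AFdata} and converges to $(\gamma,\tau)$ in $\mathcal{D}$ with $L^1$-convergent constraints; second, apply Theorem~\ref{DECdensity2} to each $(\gamma_n,\tau_n)$, prescribing the boundary expansion equal to $\theta$ exactly, to produce $(\hat g_n,\hat\pi_n)$ with harmonic asymptotics and DEC. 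Theorem~\ref{PMT} then applies to each $(\hat g_n,\hat\pi_n)$ and gives $E(\hat g_n,\hat\pi_n)\ge|P(\hat g_n,\hat\pi_n)|$.

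The hard part will be passing to the limit, since the classical surface-integral definition of ADM energy-momentum is not continuous in the $\mathcal{D}$ topology alone. I plan to resolve this using the bulk-integral representation~\eqref{equation:H}, choosing test pairs $(f_0,X_0)$ that are smooth and equal to the appropriate constant vectors outside a large ball in the relevant end; this expresses $E$ and each component of $P$ as bulk integrals that are continuous on the subset of $\mathcal{D}$ whose constraints lie in $L^1$, provided the constraints converge in $L^1$. Since the density theorem built into the previous step delivers precisely this $L^1$ control, passing to the limit yields $E(\gamma,\tau)\ge|P(\gamma,\tau)|$. A subsidiary technical point, which I expect to be routine, is verifying that the initial mollification step is compatible with the input hypotheses of Theorem~\ref{DECdensity2}; shrinking $U$ at the outset absorbs the required smallness parameters.
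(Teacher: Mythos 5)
Your first paragraph is fine and matches the paper's reasoning (DEC via Lemma~\ref{CH}, boundary expansion preserved by hypothesis, constraints in $L^1$). The difficulty lies in your second paragraph, where there is a genuine gap.

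You propose to mollify $(\gamma,\tau)$ to obtain classically regular data $(\gamma_n,\tau_n)$, and then apply Theorem~\ref{DECdensity2} to $(\gamma_n,\tau_n)$. But Theorem~\ref{DECdensity2} requires its \emph{input} data set to satisfy the dominant energy condition, and mollification does not preserve the DEC: $\mu\ge|J|_g$ is a pointwise nonlinear inequality, and it is the exact identity $\overline{\Phi}_{(g,\pi)}(\gamma,\tau)=\overline{\Phi}_{(g,\pi)}(g,\pi)$ that gives $\mu_\gamma=\mu$ and hence DEC for $(\gamma,\tau)$ via Lemma~\ref{CH}. After smoothing, $\mu_n$ no longer equals $\mu$ and there is no reason for $(\gamma_n,\tau_n)$ to satisfy the DEC, so Theorem~\ref{DECdensity2} cannot be invoked. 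Shrinking $U$ does not help, because the problem is not smallness but the nonlinear pointwise structure of the constraint.

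The paper's proof sidesteps this precisely by \emph{not} applying the density theorem. Instead, for an arbitrary (not DEC-satisfying) smooth approximation $(\gamma_k,\tau_k)\to(\gamma,\tau)$ in $\mathcal D$, it uses the inverse function theorem for the map $\hat{\mathcal P}_{(\gamma_k,\tau_k)}$ built from the modified constraint operator $\overline{\Phi}_{(g,\pi)}$ and $\Theta$, with fixed $K_1,K_2$, to solve
\[
\hat{\mathcal P}_{(\gamma_k,\tau_k)}\big((u_k,Y_k),(h_k,w_k)\big) = \big(\overline\Phi_{(g,\pi)}(g,\pi),\,\Theta(g,\pi)\big),
\]
with a uniform estimate coming from Lemma~\ref{AppendixA}. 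The corrected data $(\bar\gamma_k,\bar\tau_k) := \Psi_{(\gamma_k,\tau_k)}(u_k,Y_k)+(h_k,w_k)$ has the \emph{exact} same modified constraints and boundary expansion as $(g,\pi)$; the DEC for $(\bar\gamma_k,\bar\tau_k)$ is then automatic from Lemma~\ref{CH}, with no density theorem required. Regularity of $(\bar\gamma_k,\bar\tau_k)$ comes from the Schauder-type bootstrap already established in the proof of Theorem~\ref{prescribed2}, and the limit argument (constraints fixed in $L^1$, continuity of ADM energy-momentum) is the same one you had in mind. If you want to salvage your two-step plan, replace the invocation of Theorem~\ref{DECdensity2} with this inverse-function-theorem correction at each $(\gamma_k,\tau_k)$; that is essentially the paper's argument.
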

\begin{proof}
The proof is essentially the same as in the proof of \cite[Theorem 4.1]{HuangLee}, except that we use our new results to deal with the boundary. Define $K_1$, $K_2$, and $\hat{\mathcal P}$ as in the proof of Proposition \ref{strictDEC}. More generally, for $(\gamma,\tau)\in\mathcal D$, we define
\begin{align*}
\hat{\mathcal P}_{(\gamma,\tau)}:[(1,0)+K_1]\times K_2&\to \mathcal L\times W^{1-\frac 1p,p}(\partial M)\\
((u,Y),(h,w))&\mapsto (\overline\Phi_{(g,\pi)},\Theta)[\Psi_{(\gamma,\tau)}(u,Y)+(h,w)],
\end{align*}
so that in particular, $\hat{\mathcal P}_{(g,\pi)}=\hat{\mathcal P}$. Using similar reasoning as in the proof of Lemma \ref{main},  we can use the inverse function theorem (together with estimates from Lemma \ref{AppendixA}) to see that there exists open ball $U\subset\mathcal{D}$ containing $(g, \pi)$ and constants $\delta>0$ and $C_1>0$ with the property that for all $(\gamma, \tau)$ in $U$, $\hat{\mathcal P}_{(\gamma, \tau)}$ is a diffeomorphism between a neighborhood of $((1,0),(0,0))$ in $[(1,0)+K_1]\times K_2$ and the ball of radius $\delta$ around 
$\hat{\mathcal P}_{(\gamma, \tau)}((1,0),(0,0))$ in $\mathcal L\times W^{1-\frac 1p,p}(\partial M)$, and 
\[
\| ((u-1, Y), (h,w))\|_{K_1\times K_2} \le C_1 \| \hat{\mathcal P}_{(\gamma, \tau)}((u, Y), (h, w)) - \hat{\mathcal P}_{(\gamma, \tau)}((1, 0), (0, 0)) \|_{\mathcal L\times W^{1-\frac 1p,p}}.
\]

We claim that the conclusion of Lemma~\ref{SobolevPMT} holds with this choice of $U$. We now assume $(\gamma, \tau)$ satisfies the hypotheses described in Lemma~\ref{SobolevPMT}, that is, $(\gamma, \tau)\in U$ such that $\overline{\Phi}_{(g,\pi)}(\gamma,\tau) = \overline{\Phi}_{(g,\pi)}(g, \pi)$, and $\Theta(\gamma, \tau)=\Theta(g,\pi)$. We want to show that $E(\gamma, \tau)\ge|P(\gamma,\tau)|$, and we will do this by constructing a sequence $(\bar\gamma_k, \bar\tau_k)$ that converges to $(\gamma,\tau)$, to which we can apply Theorem~\ref{PMT}).

 Select a sequence of smooth asymptotically flat initial data $(\gamma_k, \tau_k)$ converging to $(\gamma, \tau)$ in $\mathcal{D}$. This implies that $\hat{\mathcal P}_{(\gamma_k, \tau_k)}((1, 0), (0, 0))=\left(\overline{\Phi}_{(g,\pi)}(\gamma_k, \tau_k), \Theta(\gamma_k, \tau_k)\right)$ converges to $\hat{\mathcal P}_{(\gamma, \tau)}((1, 0), (0, 0))=\left(\overline{\Phi}_{(g,\pi)}(\gamma, \tau), \Theta(\gamma, \tau)\right)$ in $\mathcal L\times W^{1-\frac 1p,p}$.
 
 In particular, for large enough $k$, $(\gamma_k, \tau_k)\in U$ and 
 $\hat{\mathcal P}_{(\gamma, \tau)}((1,0),(0,0))$ lies in the $\delta$-ball around $\hat{\mathcal P}_{(\gamma_k, \tau_k)}((1,0),(0,0))$, and hence, by our construction of $U$,  there exists $((u_k, Y_k), (h_k, w_k))\in  [(1,0)+K_1]\times K_2$ such that 
\[
\hat{\mathcal P}_{(\gamma_k, \tau_k)}((u_k, Y_k), (h_k, w_k)) = 
\hat{\mathcal P}_{(\gamma, \tau)}((1, 0), (0, 0))
\]
and
\begin{align*}
\|((u_k-1, Y_k), (h_k,w_k))\|_{K_1\times K_2}
&\le C_1 \|  \hat{\mathcal P}_{(\gamma_k, \tau_k)}((u_k, Y_k), (h_k, w_k)) -\hat{\mathcal P}_{(\gamma_k, \tau_k)}((1, 0), (0, 0))
\|_{\mathcal L\times W^{1-\frac 1p,p}} \\
&= C_1 \|  \hat{\mathcal P}_{(\gamma, \tau)}((1, 0), (0, 0)) -\hat{\mathcal P}_{(\gamma_k, \tau_k)}((1, 0), (0, 0))
\|_{\mathcal L\times W^{1-\frac 1p,p}}.
\end{align*}
Setting $(\bar\gamma_k, \bar\tau_k):=\Psi_{(\gamma_k, \tau_k)}(u_k, Y_k) + (h_k, w_k)$, the inequality above shows that  $(\bar\gamma_k, \bar\tau_k)$ converges to $(\gamma, \tau)$ in $\mathcal{D}$. Note that 
\[ (\overline\Phi_{(g,\pi)}, \Theta)(\bar\gamma_k, \bar\tau_k)
= \hat{\mathcal P}_{(\gamma, \tau)}((1, 0), (0, 0))=  (\overline\Phi_{(g,\pi)}, \Theta)(\gamma, \tau)=  (\overline\Phi_{(g,\pi)}, \Theta)(g,\pi),
\]
and thus, unlike the arbitrary smoothing $(\gamma_k, \tau_k)$,  $(\bar\gamma_k, \bar\tau_k)$ satisfies the DEC (by Lemma~\ref{CH}) and has weakly outer trapped or inner untrapped boundary components. Moreover, by the same regularity argument used in the proof of Theorem~\ref{prescribed2}, $(\bar\gamma_k, \bar\tau_k)$ is smooth enough and decays enough so that the positive mass inequality (Theorem~\ref{PMT}) applies to $(\bar\gamma_k, \bar\tau_k)$, and hence $E(\bar\gamma_k, \bar\tau_k)\ge |P(\bar\gamma_k, \bar\tau_k)|$. Finally, we take the limit as $k\to\infty$ and use continuity of ADM energy-momentum~\cite[Lemma 8.4]{Lee} to conclude that $E(\gamma, \tau)\ge |P(\gamma,\tau)|$. (Note that $(\bar\gamma_k, \bar\tau_k)$ has the same modified constraints as $(\gamma, \tau)$, and thus the constraints of $(\bar\gamma_k, \bar\tau_k)$ converge to the constraints of $(\gamma, \tau)$ in $L^1$.)
\end{proof}

\subsection{Embedding in Minkowski space when $E=0$}\label{S4.2}

In this section, we use the Jang reduction method to show that if $\partial M\ne\emptyset$, then $E>0$. Combined with Theorem~\ref{PMT} and Theorem~\ref{ADMnull}, this implies Theorem~\ref{strictPMT}.  We recall that a function $f$ defined on an open set $U$ in an initial data set $(M,g,k)$ solves \emph{Jang's equation} \cite{Jang, SY81i} if
\begin{equation}\label{Jangs}H_g(f)-\tr_g(k)(f)=0,\end{equation}
where 
\[H_g(f)=\Div_g\left(\frac{\nabla f}{\sqrt{1+|\nabla f|^2}}\right)\]
is the mean curvature of the graph of $f$ in the cylinder over $(M,g)$ and 
\[\tr_g(k)(f)=\tr_g k-\frac{k(\nabla f,\nabla f)}{1+|\nabla f|^2}\]
is the trace of $k$ (extended trivially in the vertical direction) over the tangent spaces of the graph of $f$.

Jang's equation \eqref{Jangs} is a quasilinear elliptic equation for $f$, but the presence of the lower order term $\tr_gk$ precludes the use of the maximum principle to obtain a supremum estimate for $f$.\footnote{In the case when $\tr_g k$ has a good sign, see \cite[Theorem 3.4]{Metzger}.} The lack of such an a priori estimate is an obstacle for proving solutions of Jang's equation exist. Schoen and Yau \cite{SY81i} overcame this by instead considering the \emph{capillary regularized Jang's equation} 
\begin{equation}
    H_g(f_\tau)-\tr_g(k)(f_\tau)=\tau f_\tau \label{CapJang},
\end{equation}
where $\tau$ is a positive real parameter which we want to send to zero. 

The maximum principle now yields $\|f_\tau\|_{L^\infty}\les \tau^{-1}$, which is singular but suffices to show that the solutions $f_\tau$ exist globally. It follows that any global \emph{nonparametric} estimates (in the sense of minimal graphs) will grow like $\tau^{-1}$ as $\tau\to 0$. However, crucially, in the asymptotically flat setting $f_\tau(x)$ is bounded and even decays, uniformly in $\tau$, for $|x|$ sufficiently large in the asymptotically flat region \cite[Proposition 5]{E13}. To study the convergence of the $f_\tau$'s in the ``core," Schoen and Yau considered \emph{parametric} estimates, i.e.\ geometric estimates for the graphs of $f_\tau$. In fact, these graphs are \emph{$C$-minimizing} for some constant $C$ independent of $\tau$ (for this definition we refer to \cite{DS93,E09}). For such hypersurfaces the compactness and regularity theory is essentially the same as for area minimizing hypersurfaces. It follows that the \emph{graphs} of $f_\tau$ converge smoothly as hypersurfaces in $M\times \Bbb R$ as $\tau\to 0$. The components of the limit are either graphs of solutions to Jang's equation \eqref{Jangs} or cylinders over MOTS or MITS in the data set $(M,g,k)$.\footnote{The graphical components tend to $\pm\infty$ on approach to these cylinders. We say that the Jang graph ``blows up" over the MOTS or MITS.} Since $|f_\tau(x)|\les 1$ for $|x|\gtrsim 1$, the limiting hypersurface contains a graphical component, defined over some set $\mathcal U$ which must contain a neighborhood of infinity. It is precisely this exterior graphical component that is studied in the works \cite{SY81i,E13}. These basic properties of Jang's equation are summarized neatly in \cite[Proposition 7]{E13}. 

We can now describe our modification of Eichmair's argument in the boundary case. 

\begin{thm}\label{E=0}
Let $3\le n\le 7$, and let $(M^n,g,k)$  be a complete asymptotically flat initial data set with \emph{nonempty} compact boundary $\partial M$ such that the dominant energy condition holds on $M$ and each component of $\partial M$ is either weakly outer trapped or weakly inner untrapped. In the case $n=3$, we also assume that  $\tr_g k=O(|x|^{-\gamma})$ for some $\gamma>2$. Then $E>0$. 
\end{thm}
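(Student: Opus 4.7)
The plan is to argue by contradiction. Suppose $E=0$. By the already established Theorem~\ref{PMT} we have $E \ge |P| \ge 0$, so $|P|=0$. Apply the density theorem (Theorem~\ref{DECdensity2}) to perturb $(g, k)$ to a new initial data set $(\tilde g, \tilde k)$ having harmonic asymptotics, satisfying the \emph{strict} DEC $\tilde \mu > |\tilde J|_{\tilde g}$, and with strictly trapped/untrapped boundary: $\tilde \theta^+ < 0$ on $\partial^+ M$ and $\tilde \theta^- > 0$ on $\partial^- M$. By Theorem~\ref{PMT} applied to $(\tilde g, \tilde k)$, its ADM energy $\tilde E$ is nonnegative, and by the density theorem $\tilde E$ can be made arbitrarily small. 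The strategy is to adapt Eichmair's Jang-equation proof of the $\partial M = \emptyset$ positive mass theorem \cite{E13} to this setting and obtain a uniform positive lower bound for $\tilde E$, contradicting its smallness.

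Solve the capillary-regularized Jang equation \eqref{CapJang} on $(M, \tilde g, \tilde k)$ and pass to the limit $\tau \to 0$ as in \cite{SY81i, E13}. The strict trapping now acts as a barrier for the Jang graph: $\tilde \theta^+<0$ on $\partial^+ M$ forces the limiting Jang graph to blow up to $+\infty$ along a MOTS that either coincides with or encloses $\partial^+ M$, and analogously $\tilde \theta^- > 0$ on $\partial^- M$ forces blow-up to $-\infty$ along a MITS. The exterior graphical component lives over a region $\mathcal U \subset M$ whose interior boundary components are these MOTS/MITS. On this Jang graph the Schoen--Yau identity combined with the strict DEC gives $R_{\hat g_0} \ge 2(\tilde \mu - |\tilde J|_{\tilde g}) + 2|q|^2 - 2 \Div q$ for the vector field $q$ produced in the Jang computation.

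Apply the Schoen--Yau conformal deformation $\hat g = u^{4/(n-2)} \hat g_0$ with $u \to 1$ at infinity to produce a complete scalar-flat asymptotically flat Riemannian manifold $(N, \hat g)$. By harmonic asymptotics of $(\tilde g, \tilde k)$ together with the decay of the Jang graph, the ADM energy of $(N, \hat g)$ at the original end equals $\tilde E$, while the cylindrical blow-up ends over the MOTS/MITS are turned into additional asymptotically flat ends by the standard Schoen--Yau compactification. Thus $(N, \hat g)$ has at least two asymptotically flat ends and nonnegative scalar curvature.

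Finally, by the Riemannian positive mass theorem with rigidity in dimensions $3 \le n \le 7$ (Schoen--Yau; Bray--Lee \cite{BrayLee}), a complete asymptotically flat manifold with nonnegative scalar curvature and ADM energy zero at some end must be isometric to $\R^n$, which has only one end. Hence the presence of $\ge 2$ ends of $(N, \hat g)$ gives $\tilde E > 0$, and a quantitative refinement via the Riemannian Penrose inequality \cite{BrayLee} (or, in higher dimensions, a compactness-continuity argument using rigidity of the RPMT) yields $\tilde E \ge c$ for a constant $c>0$ controlled by the area of the outermost blow-up surface, which encloses the fixed compact $\partial M$ and therefore has positive area bounded away from zero uniformly in the perturbation. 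This contradicts $\tilde E$ being arbitrarily small. The main obstacle is the boundary analysis of the Jang equation: establishing existence and the $C$-minimizing property of the capillary-regularized solutions up to a strictly trapped boundary, and verifying that the limiting graph blows up over MOTS/MITS associated with the trapped components of $\partial M$. The extra hypothesis $\tr_g k = O(|x|^{-\gamma})$, $\gamma > 2$, in dimension $3$ is needed so that the conformal factor $u$ in the Schoen--Yau deformation decays rapidly enough for the deformation to preserve the ADM energy at the original end.
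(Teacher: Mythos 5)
Your approach diverges from the paper's in a structural way, and the divergence creates two genuine gaps.

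\textbf{What the paper actually does.} The paper does not try to extract a quantitative lower bound for $\tilde E$ from a single perturbation. Instead it perturbs $(g,k)$ to a \emph{sequence} $(g_j,k_j)$, runs the Jang/capillary machinery for each $j$, and then passes to the limit $j\to\infty$ of the resulting Jang graphs (Claim~\ref{JangClaim}). The limit is a Jang solution for the \emph{original} data $(g,k)$, and its graph $\Sigma$ still has at least one cylindrical end because the boundary has been pushed into the interior of an extended manifold $\tilde M$, preventing the cylindrical ends from collapsing. Now the hypothesis $E=0$ is used \emph{exactly}, not approximately: arguing as in Eichmair~\cite[Proposition 16]{E13}, $(\Sigma, g_\Sigma)$ is scalar-flat with zero mass, and the positive mass theorem with arbitrary ends~\cite{LUY21,LLU2} (rigidity case) plus Cheeger--Gromoll forces $\Sigma$ to be Ricci-flat with a single end, contradicting the cylindrical end. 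No Penrose-type inequality is needed.

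\textbf{Gap 1: the cylindrical ends do not become AF ends.} You claim that the Schoen--Yau conformal deformation $\hat g=u^{4/(n-2)}\hat g_0$ turns the cylindrical blow-up ends into additional asymptotically flat ends. This is not what happens: on an end isometric to $[0,\infty)\times\Sigma_0$ with $\hat g_0\approx dt^2+g_0$, the conformal factor decays like $u\sim e^{-ct}$, so $\hat g$ has finite diameter and finite volume there. These ends are cusp-like and can at best be one-point compactified; they are not asymptotically flat, so the manifold $(N,\hat g)$ you produce does not have ``$\ge 2$ AF ends,'' and the rigidity statement of the standard Riemannian PMT you invoke does not apply. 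To handle these ends one needs the arbitrary-ends version of the PMT, which is exactly what the paper uses.

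\textbf{Gap 2: the quantitative bound.} The claim that the Riemannian Penrose inequality gives $\tilde E\ge c>0$ uniformly is unsupported and in fact cannot go through in this form. The Penrose inequality requires an outermost minimal surface boundary for $\hat g$, but the conformal factor $u$ tends to $0$ along the cylindrical ends, so the $\hat g$-areas of cross-sections near the blow-up surface tend to $0$; the blow-up set is not realized as an outermost minimal surface of controlled $\hat g$-area. Obtaining such a quantitative lower bound from the Jang reduction would essentially prove a spacetime Penrose inequality, a well-known open problem. The fallback ``compactness-continuity argument using rigidity of the RPMT'' in higher dimensions is not a known technique and is too vague to serve as a proof. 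The paper sidesteps this entirely by taking the limit and using the exact equality $E=0$.

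In short, the overall skeleton (density theorem, capillary Jang equation with the boundary as barrier, blow-up over MOTS/MITS) matches the paper, but the final contradiction in your proposal rests on two steps that do not hold: the cylindrical ends do not become asymptotically flat ends, and no Penrose-type quantitative bound is available. Replacing these with the paper's ``limit of Jang graphs + arbitrary-ends PMT rigidity'' step is essential.
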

\begin{proof}
Let $(M, g, k)$ be as described in the hypotheses.
By our DEC density theorem, Theorem \ref{DECdensity2}, there exists a sequence of initial data $(g_j,k_j)\to (g,k)$ on $M$ with harmonic asymptotics, satisfying the strict dominant energy condition, $\theta^+_j<0$ on $\partial^+ M$, $\theta^-_j>0$ on $\partial^-M$, and $|\tr_{g_j}k_j|\le C|x|^{-\gamma}$ uniformly in $j$ when $n=3$.\footnote{The $n=3$ claim follows from observing that the only term appearing in $\tr_{g_j}k_j$ (when written in terms of $g$, $k$, $\lambda$, and the deformations $u, Y h, w$) that is not directly controlled is the $\tr_g k$ term, which is controlled by assumption. See \cite[Proposition 15]{E13}.}


The strict sign for $\theta^\pm$ on $\partial M$ relative to $(g_j,k_j)$ allows $\partial M$ to act as a barrier for the capillary regularized Jang equation 
\[H_{g_j}(f_\tau)-\tr_{g_j}(k_j)(f_\tau)=\tau f_\tau\]
as in \cite[Proposition 3.5]{AM09}. On the asymptotically flat end of $M$, we prescribe $f_\tau\to 0$ and proceed to solve as in \cite{SY81i, AM09, E13}. We also obtain the usual parametric and nonparametric estimates associated to Jang's equation. Letting $\tau \to 0$, we obtain open sets $\mathcal U_j\subset M$, as described above, containing $\{|x|\ge R_0\}$ for some large $R_0$, equipped with a function $f_j\in C^{3,\alpha}_{-n+2+\eta}(\mathcal U_j)$ for any fixed $\eta>0$ which solves Jang's equation and satisfies the properties proved in \cite[Proposition 7]{E13}, with the same proof. The barrier property of $\partial M$ implies that $f_j$ is unbounded. In particular, $f_j$ must blow up over some nonempty union of closed MOTS and MITS enclosing $\partial M$. More specifically, the graph of $f_j$ must have at least one end that is asymptotic to a cylinder~\cite[Proposition 7 (c)]{E13}.

\begin{claim}\label{JangClaim}
After passing to a subsequence, the graphs of $f_j$ converge in $C^{3,\alpha}_\loc$ to the graph of a Jang solution $f:\mathcal U\to \Bbb R$, which blows up over some nonempty union of closed MOTS and MITS  in $(M,g,k)$. With its induced metric, the graph  of $f$ is asymptotically flat with a nonzero number of ends that are asymptotically cylindrical. 
\end{claim}

The main nontrivial claim here is that the property of having an asymptotically cylindrical end persists in the limit.  By the Harnack inequality for Jang's equation, it suffices to show that the limiting function $f$ is unbounded. The only thing we must rule out is cylindrical ends collapsing into the boundary $\partial M$. To do this, we extend the manifold $M$ to a slightly larger manifold $\tilde M$ so that $\partial M$ lies in the interior of $M$. We extend each metric $g_j$ (including $g$) to $\tilde M$ so that on every compact set, $\|g_j-g\|_{C^{2,\alpha}}\to 0$. Then we apply the compactness and regularity theory for $C$-minimizing graphs on the extended manifold. The Jang graphs no longer approach the boundary, so the cylindrical ends cannot disappear in the limit. The claims about the blow-up locus being a collection of MOTS/MITS in $(M,g,k)$ and the graph being asymptotically flat follow easily from \cite[Proposition 7]{E13}. This proves Claim \ref{JangClaim}. 

We assume now that $E=0$ and work to obtain a contradiction.
It follows that  $E_j\to 0$, and arguing as in \cite[Proposition 16]{E13}, we see that the graph of $f$, which we denote by $\Sigma$, is scalar-flat and has zero mass. (This part of the argument is highly nontrivial but is agnostic to the presence of the boundary $\partial M$; it only relies on the conclusion of Claim \ref{JangClaim}.) Now $(\Sigma,g_\Sigma)$, viewed as a time-symmetric initial data set, must be diffeomorphic to $\Bbb R^n$. This can be viewed as an extension of the rigidity of the Riemannian positive mass theorem to manifolds with cylindrical ends. The argument is outlined in \cite{E13}, but is a special case of the more recent positive mass theorem with \emph{arbitrary ends} \cite{LUY21, LLU2}. One first shows that $\Sigma$ is Ricci-flat and then has only one end by the Cheeger--Gromoll splitting theorem. However, $\Sigma$ having only one end is in contradiction to Claim \ref{JangClaim}. 
\end{proof}

\begin{appendix}

\section{Second differential of the constraint-null expansion system}\label{appendix}

In this paper, we utilize the inverse function theorem to perturb \emph{families} of initial data sets. To this end, we need to control the constants appearing in the ``quantitative" version of the inverse function theorem \cite[Theorem A.43]{Lee}. 

\begin{lem}\label{AppendixA} Let $(M^n,g, \pi)$ be an asymptotically flat data set as in Section \ref{defs}. Let $K_1\subset T_{(1,0)}\mathcal C$ be a closed subspace and $K_2\subset T_{(g,\pi)}\mathcal D$ be a finite-dimensional subspace. There exists a constant $C_0$ such that for any $r_0>0$ sufficiently small, the following is true.

Let $(\gamma,\tau)\in \mathcal D$ with $\|(\gamma,\tau)-(g,\pi)\|_\mathcal D\le r_0$ and define
\begin{align*}\hat{\mathcal P}_{(\gamma,\tau)}:[(1,0)+K_1]\times K_2&\to \mathcal L\times W^{1-\frac 1p,p}(\partial M)\\
((u,Y),(h,w))&\mapsto (\Phi,\Theta)[\Psi_{(\gamma,\tau)}(u,Y)+(h,w)].\end{align*}
Then
\begin{equation}\label{first}\|D\hat{\mathcal P}_{(\gamma,\tau)}|_{((1,0),(0,0))}-D\hat{\mathcal P}_{(g,\pi)}|_{((1,0),(0,0))}\|_{L(K_1\times K_2,\mathcal L\times W^{1-\frac 1p,p})}\le C_0r_0\end{equation}
and
\begin{equation}\label{second}\|D^2 \hat{\mathcal P}_{(\gamma,\tau)}|_{((u,Y),(h,w))}\|_{L_2(K_1\times K_2,\mathcal L\times W^{1-\frac 1p,p})}\le C_0 \end{equation}
for any $((u,Y),(h,w))\in B_{r_0}((1,0),(0,0))$.

This lemma also holds if in the definition of $\hat{\mathcal P}$, we use the modified constraint operator $\overline{\Phi}_{(g,\pi)}$ instead of $\Phi$. 
\end{lem}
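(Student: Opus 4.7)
The plan is to view $\hat{\mathcal P}_{(\gamma,\tau)}$ as the fiberwise restriction of a single jointly smooth map of Banach spaces
$$\mathcal Q:\mathcal D\times\bigl([(1,0)+K_1]\times K_2\bigr)\to \mathcal L\times W^{1-\frac 1p,p}(\partial M),\qquad \mathcal Q\bigl((\gamma,\tau),\alpha\bigr)=(\Phi,\Theta)\bigl[\Psi_{(\gamma,\tau)}(u,Y)+(h,w)\bigr],$$
with $\alpha=((u,Y),(h,w))$, and then to extract both estimates from a single qualitative statement: the fiberwise and mixed second derivatives of $\mathcal Q$ are uniformly bounded in operator norm on a small product ball around $((g,\pi),((1,0),(0,0)))$. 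Inequality \eqref{second} is then exactly the uniform bound on $D^2_\alpha\mathcal Q$, while \eqref{first} follows from the mean value inequality applied to the map $(\gamma,\tau)\mapsto D_\alpha\mathcal Q|_{((\gamma,\tau),((1,0),(0,0)))}$ along the straight line from $(g,\pi)$ to $(\gamma,\tau)$, using the uniform bound on the mixed partial $D_{(\gamma,\tau)}D_\alpha\mathcal Q$.

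First I would establish that $\mathcal Q$ is smooth on a neighborhood of the base point. Lemma \ref{theta_regularity} already yields smoothness of $(\Phi,\Theta)$ on $\mathcal D$, and the formula $\Psi_{(\gamma,\tau)}(u,Y)=(u^s\gamma,\,u^{-3s/2}(\tau+\mathfrak L_\gamma Y))$ displays the conformal reparameterization as a polynomial expression in $u^{\pm s/2}$, in the background tensors $\gamma,\tau$, in first derivatives of $Y$, and in Christoffel symbols of $\gamma$. Since $p>n$, the weighted Sobolev multiplication maps $W^{2,p}_{-q}\times W^{2,p}_{-q}\to W^{2,p}_{-q}$ and $W^{2,p}_{-q}\times W^{1,p}_{-q-1}\to W^{1,p}_{-q-1}$ are continuous bilinear with operator norms depending only on $n,p,q$; the scalar composition $u\mapsto u^{\pm s/2}$ is smooth for $u$ close to $1$. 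Composing yields $\mathcal Q\in C^\infty$ near the base point.

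With smoothness in hand, the quantitative estimates \eqref{first}--\eqref{second} reduce to enumerating the finitely many multilinear summands obtained by differentiating $\mathcal Q$ twice. Each summand is schematically a pointwise product of a tensor $P$ built from $(\gamma,\tau,u,Y,h,w)$ and their derivatives (and from the inverse of $u^s\gamma+h$, well-defined for $r_0$ small) against one or two deformation vectors in $K_1\times K_2$. The weighted Sobolev multiplication estimates above then control each summand in $\mathcal L$ by a universal constant times the product of deformation norms, the constant depending only on $\|g-\overline g\|_{W^{2,p}_{-q}}$, $\|\pi\|_{W^{1,p}_{-q-1}}$, and $r_0\le 1$. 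The boundary contribution from $\Theta$ is handled as in the proof of Lemma \ref{theta_regularity}: one computes in a tubular neighborhood $U$ of $\partial M$ where the relevant expression lies in $W^{1,p}(U)$ (using that $W^{1,p}$ is a Banach algebra for $p>n$), and then applies the bounded trace operator $W^{1,p}(U)\to W^{1-\frac 1p,p}(\partial M)$ with norm controlled by Lemma \ref{trace}.

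The main obstacle is bookkeeping rather than analytic: the formulas \eqref{T} and \eqref{Upsilon} already contain many terms, and their Hessians involve sums that must each be checked against the weighted Sobolev algebra estimates with the correct matching of weights. A minor but necessary subtlety is that $r_0$ must be taken small enough that $u^s\gamma+h$ remains uniformly positive definite (so that pointwise inversion remains a smooth operation), and that the almost-Fermi coordinate system of Definition \ref{almostFermi}, constructed once for $g$, continues to yield the trace estimate of Lemma \ref{trace} with uniform constants for every nearby metric; both conditions are automatic from continuity once $r_0$ is chosen small in terms of $(g,\pi)$. The modified-constraint version of the lemma requires no new argument since $\overline\Phi_{(g,\pi)}-\Phi$ is a continuous linear map on $\mathcal D$ whose second derivative vanishes identically.
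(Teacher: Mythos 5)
Your proposal is correct and its technical heart coincides with the paper's: weighted Sobolev multiplication estimates (with $p>n$ giving a Banach algebra structure via Morrey), the trace operator of Lemma~\ref{trace} with norms depending only on $\|g-\overline g\|_{W^{2,p}_{-q}}$, and a schematic bookkeeping of the terms that arise upon differentiating twice. What differs is the packaging. The paper factors $\hat{\mathcal P}_{(\gamma,\tau)} = (\Phi,\Theta)\circ\hat\Psi_{(\gamma,\tau)}$ and proceeds by the chain rule: it first establishes (in Lemmas~\ref{PhiSecondDerivative} and~\ref{ThetaSecondDerivative}) explicit and schematic formulas for $D\Phi$, $D^2\Phi$, $D\Theta$, $D^2\Theta$, from which it derives the two intermediate estimates $\|D(\Phi,\Theta)|_{(\gamma,\tau)}-D(\Phi,\Theta)|_{(g,\pi)}\|\lesssim r_0$ and $\|D^2(\Phi,\Theta)|_{(g,\pi)}\|\lesssim 1$, then writes $D\hat\Psi_{(\gamma,\tau)}$ and $D^2\hat\Psi_{(\gamma,\tau)}$ explicitly and combines everything via the first- and second-order chain rule. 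You instead regard $\hat{\mathcal P}_{(\gamma,\tau)}$ as a fiber of one jointly smooth map $\mathcal Q$ on $\mathcal D\times\bigl([(1,0)+K_1]\times K_2\bigr)$, get \eqref{second} directly as a bound on $D^2_\alpha\mathcal Q$, and get \eqref{first} from the mean value inequality along the parameter direction using a bound on the mixed partial $D_{(\gamma,\tau)}D_\alpha\mathcal Q$. The tradeoff: your framing is cleaner conceptually and avoids writing the chain rule formulas twice, but it leaves the bookkeeping implicit, whereas the paper spells out the schematic formulas for $D^2\Phi$ (including the $\Riem * h_1 * h_2$ term that must be put in $L^p_{-q}$ since curvature is not pointwise bounded) and $D^2\Theta$, as well as the $(1+4)$-term structure of $D\hat\Psi$ and $D^2\hat\Psi$; a full writeup of your version would eventually need this same inventory. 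Two small points you mention deserve care: $u^s\gamma+h$ being positive definite is indeed automatic once $r_0$ is small (and the paper enforces the analogous condition $|u-1|<1$ in the proof of Theorem~\ref{prescribed2}); and your remark that $\overline\Phi_{(g,\pi)}-\Phi$ has vanishing second derivative is true as a map on $\mathcal D$, but when precomposed with $\hat\Psi_{(\gamma,\tau)}$ the chain rule still produces a $L\circ D^2\hat\Psi$ contribution — this is bounded, so the conclusion stands, but ``vanishes identically'' overstates it slightly.
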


Here $L_2(X,Y)$ refers to the space of bounded multilinear maps $X\times X\to Y$. Note that a Lipschitz bound for $D\hat{\mathcal P}_{(\gamma,\tau)}$ follows from the Hessian bound by the mean value theorem in Banach spaces. The proof proceeds with a computation of $D\Phi,D^2\Phi,D\Theta$, and $D^2\Theta$. 

\begin{lem} \label{PhiSecondDerivative}
The first derivative (linearization) of the constraint operator is given by 
\begin{align}\label{DPhi}D\Phi|_{(g,\pi)}(h,w)&=\bigg(-\Delta_g(\tr_g h)+\Div_g(\Div_g h)-\langle \Ric_g,h\rangle_g+\tfrac{2}{n-1}(\tr_g\pi)\left(\pi^{ij}h_{ij}+\tr_g w\right)\\
&\quad\quad - 2g_{kl}\pi^{ik}\pi^{jl}h_{ij}-2\langle \pi,w\rangle_g,\nonumber\\
&\quad\quad(\Div_g w)^i-\tfrac 12 g^{ij}\pi^{kl}\nabla_j h_{kl}+g^{ij}\pi^{kl}\nabla_ kh_{jl}+\tfrac 12 \pi^{ij}\nabla_j (\tr_g h)\bigg)\nonumber
\end{align}
Schematically, the second derivative is given by 
\begin{align}\label{D2Phi}D^2\Phi|_{(g,\pi)}((h_1,w_1),(h_2,w_2))&=\bigg(\sum_{0\le i_1+i_2\le 2}\nabla^{i_1}h_1*\nabla^{i_2}h_2+\Riem * h_1 * h_2\\
&\quad\quad +\pi*\pi*h_1*h_2+w_1*w_2+\pi*h_1*w_2+\pi*h_2*w_1,\nonumber\\
&\quad\quad  w_1*\nabla h_2+w_2*\nabla h_1+ \pi* h_1*\nabla h_2+\pi* \nabla h_1*h_2\bigg). \nonumber
\end{align}
Here we use the usual schematic notation where $A*B$ denotes linear combinations and contractions of the components of $A$ and $B$ with respect to the metric $g$. 
\end{lem}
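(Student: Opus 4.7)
\textbf{Proof plan for Lemma \ref{PhiSecondDerivative}.} The first step is to rewrite the constraint operator in the $\pi$ variable. Using $\pi^{ij} = k^{ij} - (\tr_g k)g^{ij}$, a direct algebraic computation (contracting with $g_{ij}$ gives $\tr_g k = -\tfrac{1}{n-1}\tr_g\pi$) yields
\[
2\mu = R_g - |\pi|_g^2 + \tfrac{1}{n-1}(\tr_g \pi)^2, \qquad J^i = (\Div_g \pi)^i.
\]
These are the expressions I would differentiate.

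For the linearization \eqref{DPhi} of the first component, I would combine the classical Lichnerowicz formula $\delta R_g(h) = -\Delta_g \tr_g h + \Div_g\Div_g h - \langle \Ric_g, h\rangle_g$ with the elementary identities $\delta g^{ij} = -g^{ik}g^{jl}h_{kl}$, $\delta|\pi|_g^2 = 2 g_{kl}\pi^{ik}\pi^{jl}h_{ij} + 2\langle \pi, w\rangle_g$, and $\delta \tr_g \pi = \pi^{ij}h_{ij} + \tr_g w$. Adding these together with the correct coefficients reproduces the first component. For the second component, I would expand $J^i = \partial_j \pi^{ij} + \Gamma^i_{jk}\pi^{jk} + \Gamma^j_{jk}\pi^{ik}$ and apply $\delta \Gamma^k_{ij} = \tfrac{1}{2}g^{kl}(\nabla_i h_{jl} + \nabla_j h_{il} - \nabla_l h_{ij})$. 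The trace simplification $\delta \Gamma^j_{jk} = \tfrac{1}{2}\nabla_k \tr_g h$, combined with the symmetry of $\pi$, produces exactly the four listed terms.

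For the schematic second derivative \eqref{D2Phi}, the argument is structural rather than computational. The point is that, in any local coordinate chart, $\Phi(g,\pi)$ is a universal polynomial in the jet variables $(g_{ij}, g^{ij}, \partial_k g_{ij}, \partial_k \partial_l g_{ij}, \pi^{ij}, \partial_k \pi^{ij})$ which is (i) \emph{linear} in $\partial^2 g$ and in $\partial \pi$, (ii) at most \emph{quadratic} in $\pi$, and (iii) has no $w$-independent derivatives higher than first order. Bilinearizing at $(g,\pi)$ via the chain rule, the second partial derivatives of this polynomial with respect to each jet variable yield terms whose total $\partial$-order on $(h_1, h_2)$ is at most $2$ and whose $w$-dependence is at most linear in each slot, and where a factor $w_1 w_2$ only appears from $\partial^2/\partial \pi^2$ applied to $|\pi|_g^2$ and $(\tr_g \pi)^2$. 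Converting partial derivatives into covariant ones absorbs the $\partial g$ factors into the $\nabla^{i_1} h_1 * \nabla^{i_2} h_2$ and $\pi * \ldots$ buckets. The $\Riem * h_1 * h_2$ contribution arises upon taking the second variation of $\langle \Ric_g, h_1\rangle_g$ in $\delta R_g(h_1)$: the second variation $\delta \Ric_g(h_2)$ generates $\nabla^2 h_2$ whose skew-symmetric part is exactly a Riemann contraction with $h_2$, producing the claimed curvature term.

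The only subtlety worth flagging is verifying that no term of the form $\nabla^2 h_1 * \nabla^2 h_2$ appears (which would violate the bound $i_1 + i_2 \leq 2$). This is automatic from linearity of $R_g$ in $\partial^2 g$, which forces $\partial^2_{(\partial^2 g)^2} R_g \equiv 0$; hence no cross-term in $\partial^2 h_1$ and $\partial^2 h_2$ is generated. All other bookkeeping is routine tensor algebra.
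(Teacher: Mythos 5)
Your derivation of \eqref{DPhi} is correct (the paper does not rederive it at all, but simply quotes it from the Fischer--Marsden literature and Lee's book, so your explicit computation via $2\mu=R_g-|\pi|_g^2+\tfrac{1}{n-1}(\tr_g\pi)^2$, $J=\Div_g\pi$, the scalar curvature linearization, and $\delta\Gamma$ is a fine substitute). For the schematic second derivative \eqref{D2Phi} your route genuinely differs from the paper's: the paper differentiates the explicit formula \eqref{DPhi} once more, covariantly and term by term, using the rules $\delta_g\nabla T=\nabla h_2*T+\nabla\delta_g T$, ``contractions produce $h_2\,*$ (what was contracted),'' and $\delta_g\Ric=\nabla^2h_2+\Riem*h_2$; you instead argue structurally, viewing $\Phi$ in coordinates as a universal polynomial in the jets of $(g,\pi)$ that is linear in $\partial^2 g$ and in $\partial\pi$ and quadratic in $\pi$, then bilinearizing and covariantizing. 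Both are legitimate, and for a purely schematic statement your degree count is arguably cleaner; what the paper's term-by-term differentiation buys is that the precise shape of each slot (for instance the exact $w$-structure and the absence of derivative-free $\pi*h_1*h_2$ terms in the $J$-component) falls out with no extra bookkeeping. Two cautions about your version: first, linearity of $R_g$ in $\partial^2 g$ by itself only rules out the total-order-four cross term $\nabla^2h_1*\nabla^2h_2$; to exclude total-order-three terms such as $\nabla^2h_1*\nabla h_2$, or terms like $\nabla^2 h_1 * w_2$, which would equally violate the stated form, you also need the (true and equally elementary) facts that the coefficient of $\partial^2 g$ in $R_g$ depends only on $g^{-1}$ and that the coefficient of $\partial\pi$ in $J$ is constant, so the relevant mixed second partials vanish --- this is exactly the kind of ``routine bookkeeping'' you wave at, but it is the same check you already made for the order-four term and should be stated. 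Second, your account of the origin of $\Riem*h_1*h_2$ is loose: it arises both from the curvature terms of $\delta\Ric$ (the Lichnerowicz Laplacian) paired with $h_1$ and from varying the contraction $\langle\Ric_g,h_1\rangle_g$ itself, which contributes $\Ric*h_1*h_2$; either source is of the allowed form, so this does not affect correctness, but the ``skew-symmetric part of $\nabla^2h_2$'' phrasing is not quite the right explanation.
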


The schematic notation misses factors of $g$ and $g^{-1}$ but these are pointwise bounded by Morrey's inequality. In the following calculation, we use the shorthand $\delta_g F=DF|_{g}(h)$. 

\begin{proof}
The formula for $D\Phi$ is well known in the literature \cite{FischerMarsden}. It depends on the linearization of the scalar curvature, which can be found in \cite{Lee}, for instance. To obtain the formula for $D^2\Phi$, we simply differentiate \eqref{DPhi}, making note of the following rules: 
\begin{itemize}
    \item $\delta_g \nabla T=\nabla h_2*T + \nabla \delta_g T$ for any tensor $T$, and 
    
    \item contractions produce terms of $h_2$ $*$ what was being contracted. 
\end{itemize}
Finally, we also note that the variation of the Ricci tensor is given by 
\[-2\delta_g R_{ij}=\Delta_Lh_{2\,ij}+\nabla_i\nabla_j \tr_g h_2-\nabla_i(\Div_g h_2)_j-\nabla_j(\Div_g h_2)_i,\]
where $\Delta_L$ is the Lichnerowicz Laplacian. In our schematic notation, this becomes 
\[\delta_g \Ric=\nabla^2h_2+\Riem * h_2.\]
The variation in $\pi$ is much more straightforward and \eqref{D2Phi} is easily obtained along these lines. 
\end{proof}

\begin{lem}\label{ThetaSecondDerivative}
The first derivative of the boundary null expansion is given by 
\begin{equation}\label{DTheta}D\Theta|_{(g,\pi)}(h,w)=\tfrac 12 \tr_{\partial M}(\nabla_\nu h) -\Div_{\partial M} \omega - \tfrac 12 h(\nu,\nu)H-h(\nu,\nu)\pi(\nu^\flat,\nu^\flat)-w(\nu^\flat,\nu^\flat),\end{equation}
where $\omega_i=h_{ij}\nu^j-h(\nu,\nu)\nu_i$ and $\nu^\flat$ denotes the $1$-form dual to $\nu$. Schematically, the second derivative is given by 
\begin{equation}\label{D2Theta}D^2\Theta|_{(g,\pi)}((h_1,w_1),(h_2,w_2))=\sum_{0\le i_1+i_2\le 1}\nabla^{i_1}h_1*\nabla^{i_2}h_2+h_1*w_2+h_2*w_1+w_1*w_2,\end{equation}
where schematic notation here is omitting terms like $\nu$ and $H$. 
\end{lem}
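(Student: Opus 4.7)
The plan is to decompose $\Theta = \pm H - \pi^{ij}\nu_i\nu_j$ on $\partial^\pm M$, using $\tr_{\partial M} k = -\pi^{ij}\nu_i\nu_j$, and to treat the mean curvature and momentum parts separately. I will suppress the $\pm$ sign in what follows, as it plays no role in the analysis.

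For the first variation, I would invoke the standard formula for the variation of the mean curvature of a fixed hypersurface under a deformation of the ambient metric. With $\nu_g$ the unit inner conormal, renormalized as $g$ varies, a direct computation (easily carried out in Fermi-type coordinates) gives
\[
\delta_g H (h) = \tfrac{1}{2}\tr_{\partial M}(\nabla_\nu h) - \Div_{\partial M}\omega - \tfrac{1}{2}H\,h(\nu,\nu),
\qquad \omega_i = h_{ij}\nu^j - h(\nu,\nu)\nu_i.
\]
The $\pi$-part of $\Theta$ is linear in $\pi$, so its variation in $w$ is immediate, while the variation of the normal $1$-form is computed from $\nu_i = \partial_i f/|\nabla f|_g$, yielding $\delta\nu_i = \tfrac{1}{2}\nu_i h(\nu,\nu)$. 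This gives
\[
D[\pi^{ij}\nu_i\nu_j](h,w) = w(\nu^\flat,\nu^\flat) + \pi(\nu^\flat,\nu^\flat)\,h(\nu,\nu).
\]
Combining the two produces \eqref{DTheta}.

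For the Hessian \eqref{D2Theta}, I would differentiate $D\Theta$ once more in direction $(h_2,w_2)$ at $(g,\pi)$ and count the maximal derivative order that can appear on each argument. The $\pi$-contribution produces only algebraic bilinear expressions: varying $w_1(\nu^\flat,\nu^\flat)$ in $g$ gives $w_1 * h_2$, while varying $\pi(\nu^\flat,\nu^\flat) h_1(\nu,\nu)$ in $(h_2,w_2)$ produces $w_2 * h_1$ and $\pi * h_1 * h_2$. Since $\Theta$ is linear in $\pi$, there is no pure $(w_1,w_2)$ contribution, but the schematic expression in \eqref{D2Theta} allows it as a vacuous upper bound. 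For the mean curvature part, $DH$ is a first-order linear differential operator in $h$ whose coefficients depend on $g$ through $\nu$, $\Gamma$, $H$, and the second fundamental form of $\partial M$; hence $D^2 H(h_1,h_2)$ is obtained by differentiating these coefficients in direction $h_2$ and evaluating on $h_1$. This produces only schematic terms of the form $\nabla h_2 * h_1$ or $h_2 * \nabla h_1$ at leading order (from varying $\Gamma$ inside $\nabla_\nu h_1$, varying $\nu$ in front of $\nabla h_1$, or varying $g$-dependent coefficients inside $\Div_{\partial M}\omega_{h_1}$), plus lower-order $h_1 * h_2$ terms. Crucially, no $\nabla h_1 * \nabla h_2$ term can arise: because $\partial_i$ is $g$-independent, a variation of $g$ never moves a derivative onto both arguments simultaneously. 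This gives the asserted bound $i_1 + i_2 \le 1$.

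The main ``hard'' part is only the bookkeeping of variations of the geometric objects $\nu$, $\omega$, $H$, and $\Div_{\partial M}$, together with the verification that the schematic $*$ notation hides only factors ($\nu$, $H$, $\Riem$, etc.) coming from the fixed background data and hence pointwise bounded by Morrey's embedding. A convenient device for this is to perform each differentiation in an almost-Fermi coordinate chart at a boundary point as in Definition \ref{almostFermi}, where $\nabla_\nu$ restricts to $\partial_n$ on $\partial M$ and the $g$-dependence of $\nu$ is transparent. Once this is in place, both \eqref{DTheta} and the schematic form \eqref{D2Theta} follow by routine computation, and the resulting bilinear maps land in the correct trace spaces by Lemma \ref{trace}, as required to feed into the operator-norm estimates of Lemma \ref{AppendixA}.
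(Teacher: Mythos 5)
Your proposal is correct and follows essentially the same route as the paper: decompose $\Theta = \pm H - \pi(\nu^\flat,\nu^\flat)$, vary the mean curvature and $\pi$-parts separately using the variation of the unit normal, and then argue schematically for the Hessian. The only cosmetic differences are that you cite the $\delta_g H$ formula as standard (the paper derives it from scratch by varying the second fundamental form $A$ and tracing) and you compute $\delta\nu^\flat$ from the level-set representation $\nu_i = \partial_i f/|\nabla f|_g$ (the paper instead varies the orthonormality conditions $g(\nu,\nu)=1$, $g(X,\nu)=0$); both give $\delta\nu^\flat = \tfrac12 h(\nu,\nu)\nu^\flat$, and your observation that the $w_1 * w_2$ term in \eqref{D2Theta} is vacuous is correct and consistent with the paper's schematic convention.
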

\begin{proof}
We first compute the linearization of the normal. Varying $g(\nu,\nu)=1$ gives \[h(\nu,\nu)+2g(\nu,\delta_g\nu)=0,\]
while varying $g(X,\nu)=0$ for $X\in T\Sigma$ gives
\[h(X,\nu)+g(X,\delta_g\nu)=0.\]
It follows that 
\begin{equation}
    \delta_g \nu^i=-h^{ij}\nu_j+\tfrac 12 h(\nu,\nu)\nu^i.  
\end{equation}

Secondly, we compute the linearization of the second fundamental form. For $X$ and $Y$ tangent to $\Sigma$, we have 
\[A(X,Y)=-g(\nu,\nabla_X Y)=-g_{ij} \nu^i X^k \nabla_k Y^j.\] 
Taking the variation, we have
\begin{align*}
    \delta_g A(X,Y)&=-h_{ij}\nu^i X^k\nabla_k Y^j-g_{ij}\left(-h^{il}\nu_l+\tfrac 12 h(\nu,\nu)\nu^i\right)X^k\nabla_k Y^j-g_{ij}\nu^i X^k\delta_g(\nabla_kY^j)\\
    &= \tfrac 12 h(\nu,\nu)\left(-g_{ij}\nu^i X^k\nabla_k Y^j\right)-g_{ij}\nu^i X^k\delta_g(\nabla_kY^j)\\
    &=\tfrac 12 h(\nu,\nu)A(X,Y)-\tfrac 12 g_{ij}\nu^i g^{jm} \left(\nabla_k h_{lm} +\nabla_l h_{km}-\nabla_m h_{kl}\right) X^k Y^l\\
    &=\tfrac 12\left(h(\nu,\nu)A_{kl}-\nu^m\nabla_k h_{lm}-\nu^m\nabla_l h_{km}+\nu^m\nabla_m h_{kl}\right)X^kY^l.
\end{align*}
Now
\[\nu^m\nabla_k h_{lm}=\nabla_k \omega_l +h(\nu,\nu)A_{kl} - h_{ln}A_k{}^n,\]
so that finally
\begin{equation}
    \label{Avariation} \delta_g A(X,Y)= \tfrac{1}{2}(\nabla_\nu h_{ij}-\nabla^{\partial M}_i\omega_j -\nabla_j^{\partial M}\omega_i+h_{ik}A_j{}^k +h_{jk}A_i{}^k - h(\nu,\nu)A_{ij})X^iY^j.
\end{equation}
The mean curvature of the boundary is given by 
\[H=\tr_{\partial M}A=(g^{ij}-\nu^i\nu^j)A_{ij},\]
so taking the variation and using \eqref{Avariation} yields
\begin{equation}\delta_g H=\tfrac 12 \tr_{\partial M}(\nabla_\nu h) -\Div_{\partial M} \omega - \tfrac 12 h(\nu,\nu)H\label{Hvariation}.\end{equation}
The formula for $D\Theta$ follows easily, where also note that 
\[\delta_g\nu^\flat = \tfrac 12 h(\nu,\nu)\nu^\flat.\] The schematic computation for $D^2\Theta$ also follows easily using the rules establised in the proof of Lemma \ref{PhiSecondDerivative}. 
\end{proof}

From these formulas, we deduce:

\begin{lem}
There exists a constant $C_0$ such that for any sufficiently small $r_0>0$ the following is true. If $(\gamma,\tau)\in \mathcal D$ satisfies $\|(\gamma,\tau)-(g,\pi)\|_\mathcal D\le r_0$, then 
\begin{equation}
    \|D(\Phi,\Theta)|_{(\gamma,\tau)}-D(\Phi,\Theta)|_{(g,\pi)}\|\le C_0r_0 \label{grad1}
\end{equation}
and 
\begin{equation}
     \|D^2(\Phi,\Theta)|_{(g,\pi)}\|\le C_0.\label{grad2}
\end{equation}
\end{lem}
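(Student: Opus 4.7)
The plan is to reduce both estimates to direct applications of the explicit formulas in Lemmas \ref{PhiSecondDerivative} and \ref{ThetaSecondDerivative}, together with standard weighted Sobolev product estimates and the sharp trace theorem (Lemma \ref{trace}). The key analytic facts are that $p > n$ implies $W^{2,p}_{-q}(M) \hookrightarrow C^{1,\alpha}_{-q}(M) \subset L^\infty_{-q}(M)$ by Morrey's inequality, that $W^{1,p}_{-q}$ is a Banach algebra for our range of $q$, and that for $s, r \le 0$ one has $L^p_s \cdot L^\infty_r \subset L^p_{s+r} \subset L^p_s$.

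For \eqref{grad2}, I would inspect each schematic product in \eqref{D2Phi}. The worst terms such as $\nabla^2 h_1 \ast h_2$ lie in $L^p_{-q-2}(M) \cdot L^\infty_{-q}(M) \subset L^p_{-q-2}(M)$ since $q > 0$, while the lower order products and those multiplying $\pi$ or $\Riem$ are controlled similarly, with the background tensors from the fixed data $(g,\pi)$ contributing uniformly bounded coefficients. The analogous argument handles the momentum component using $w_i \in W^{1,p}_{-q-1}$. For the boundary second derivative \eqref{D2Theta}, estimate each product in a collar neighborhood of $\partial M$ in $W^{1,p}$ and apply Lemma \ref{trace} to bound the resulting trace in $W^{1-1/p,p}(\partial M)$; the coefficients $H$, $\nu$, and $\pi$ contribute operator norm depending only on $\|g-\overline g\|_{W^{2,p}_{-q}}$ and $\|\pi\|_{W^{1,p}_{-q-1}}$.

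For \eqref{grad1}, I would apply the fundamental theorem of calculus in Banach spaces along the segment $(g_t,\pi_t) := (1-t)(g,\pi) + t(\gamma,\tau)$ to write
\[
  D(\Phi,\Theta)|_{(\gamma,\tau)} - D(\Phi,\Theta)|_{(g,\pi)} = \int_0^1 D^2(\Phi,\Theta)|_{(g_t,\pi_t)}\bigl((\gamma - g, \tau - \pi),\,\cdot\,\bigr)\, dt.
\]
The same estimate that proves \eqref{grad2} at $(g,\pi)$ yields $\|D^2(\Phi,\Theta)|_{(g_t,\pi_t)}\| \le C_0$ uniformly in $t \in [0,1]$ provided $r_0$ is sufficiently small, because the coefficient functions of the schematic formulas depend continuously on the base data in $\mathcal D$, and $(g_t,\pi_t)$ stays within a ball of radius $r_0$ around $(g,\pi)$. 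Taking norms of the integral then gives \eqref{grad1}.

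The main obstacle will be the boundary term: unlike the interior estimates, the expression for $D^2\Theta$ involves the unit normal $\nu_g$ and the second fundamental form of $\partial M$, which are only $W^{2,p}$ and $W^{1,p}$ respectively, so the bounds cannot be done pointwise. The resolution is to work in a collar of $\partial M$ using the almost-Fermi coordinates of Definition \ref{almostFermi}, compute each term as a $W^{1,p}$ function there, and apply the sharp trace theorem. For the modified constraint operator $\overline{\Phi}_{(g,\pi)}$, the additional term $(0, \tfrac{1}{2} g^{ij} h_{jk} J^k)$ is linear in $h$ with coefficients fixed from $(g,\pi)$, so it contributes nothing to $D^2$ and only a trivial zeroth-order Lipschitz contribution to \eqref{grad1}.
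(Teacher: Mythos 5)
Your proposal is correct and, for \eqref{grad2}, follows essentially the paper's argument: examine the bilinear terms in the schematic formulas \eqref{D2Phi} and \eqref{D2Theta}, place the factor carrying the most derivatives in $L^p_{-q}$ (or $W^{1,p}$), place the other factors in $L^\infty$ via Morrey, and use the trace theorem (Lemma \ref{trace}) rather than Morrey for the boundary terms. One small imprecision worth flagging: you describe the coefficients from $(g,\pi)$ as ``uniformly bounded,'' but the $\Riem * h_1 * h_2$ term does not fit that description, since $\Riem$ is only in $L^p_{-q-2}$ and not in $L^\infty$. The paper singles this term out; the fix is exactly the one implicit in your general strategy (put $\Riem$ in $L^p$ and $h_1$, $h_2$ in $L^\infty$), but your phrasing conflates it with the bounded-coefficient case.

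For \eqref{grad1} you take a genuinely different route: you integrate $D^2(\Phi,\Theta)$ along the segment from $(g,\pi)$ to $(\gamma,\tau)$ and invoke a uniform version of \eqref{grad2} on that segment. The paper instead estimates the difference $D(\Phi,\Theta)|_{(\gamma,\tau)} - D(\Phi,\Theta)|_{(g,\pi)}$ directly, term by term, by rewriting each pair of corresponding terms with the difference $\gamma - g$ or $\tau - \pi$ factored out (the $g^{ij}\partial_i\partial_j(g^{kl}h_{kl})$ example in the paper's proof). The two arguments are equivalent in content and yield the same constant; your mean-value route is cleaner and makes explicit that the only input is the Hessian bound at nearby base points, which is also how the lemma is actually used in the proof of Lemma~\ref{AppendixA} (where \eqref{grad2} is applied not only at $(g,\pi)$ but at points $\hat\Psi_{(\gamma,\tau)}((u,Y),(h,w))$ near it). The paper's direct route avoids invoking the Banach-space mean-value theorem and is self-contained, but requires tracking the schematic differences by hand. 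Either is acceptable.
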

\begin{proof} We first remark that the constants appearing in the Sobolev, Morrey, and trace inequalities associated to the metric $\gamma$ can be bounded in terms of $r_0$.
The first estimate \eqref{grad1} can be read off from the explicit formulas \eqref{DPhi} and \eqref{DTheta}. For example, consider 
\[g^{ij}\partial_i\partial_j (g^{kl}h_{kl})-\gamma^{ij}\partial_i\partial_j(\gamma^{kl}h_{kl}).\]
We rewrite this as 
\[(g^{ij}-\gamma^{ij})\partial_i\partial_j(\gamma^{kl}h_{kl})+g^{ij}\partial_i\partial_j((g^{kl}-\gamma^{kl})h_{kl})\]
and from this it is not hard to see that the $L^p_{-q}$ norm can be estimated by $\les r_0 \|h\|_{W^{2,p}_{-q}}$. 

To prove the estimate \eqref{grad2}, we examine the bilinear structure of the schematic formulas \eqref{D2Phi} and \eqref{D2Theta}. For $D^2\Phi$, we put the highest number of derivatives in $L^p_{-q}$ and the lowest number of derivatives in $L^\infty$ using Morrey's inequality. Special care must be taken with the $\Riem *h_1*h_2$ term, as the curvature is not assumed to be pointwise bounded. However, it is in $L^p_{-q}$, so we just put $h_1$ and $h_2$ in $L^\infty$. Altogether, we obtain the estimate 
\[\|D^2\Phi|_{(\gamma,\tau)}((h_1,w_1),(h_2,w_2))\|_\mathcal L\les \|(h_1,w_1)\|_{W^{2,p}_{-q}\times W^{1,p}_{-q-1}}\|(h_2,w_2)\|_{W^{2,p}_{-q}\times W^{1,p}_{-q-1}}.\]
For $D^2\Theta$, we estimate each of the terms appearing in \eqref{D2Theta} in $W^{1-\frac 1p,p}(\partial\Omega)$. Terms with derivatives are handled using Lemma \ref{trace} instead of Morrey's inequality. Note that our schematic notation omits the normal $\nu_g$ and mean curvature $H_g$, however both of these are pointwise bounded in terms of $\gamma$. Therefore, we obtain the estimate 
\[\|D^2\Theta|_{(\gamma,\tau)}\|_{W^{1-\frac 1p,p}}\les\|(h_1,w_1)\|_{W^{2,p}_{-q}\times W^{1,p}_{-q-1}}\|(h_2,w_2)\|_{W^{2,p}_{-q}\times W^{1,p}_{-q-1}}, \]
as desired. 
\end{proof}

We can now prove the main result of this appendix, Lemma \ref{AppendixA}. 

\begin{proof}[Proof of Lemma \ref{AppendixA}]
We first define a function 
\begin{align*}
    \hat\Psi_{(\gamma,\tau)}:[(1,0)+K_1]\times K_2&\to \mathcal D\\
   ((u,Y),(h,w)) &\mapsto \Psi_{(\gamma,\tau)}(u,Y)+(h,w),
\end{align*}
so that
\[\hat{\mathcal P}_{(\gamma,\tau)}=(\Phi,\Theta)\circ \hat{\Psi}_{(\gamma,\tau)}.\]
By the chain rule for functions on Banach spaces, \begin{equation}\label{a1}D\hat{\mathcal P}_{(\gamma,\tau)}((v_1,Z_1),(h_1,w_1))=D(\Phi,\Theta)\circ D\hat\Psi_{(\gamma,\tau)}((v_1,Z_1),(h_1,w_1)).\end{equation}
The second derivative is given by 
\begin{multline}\label{a2}
    D^2\hat{\mathcal P}_{(\gamma,\tau)}\Big(((v_1,Z_1),(h_1,w_1)),((v_2,Z_2),(h_2,w_2))\Big)=\\
    D^2(\Theta,\Phi)\Big(D\hat\Psi_{(\gamma,\tau)}((v_1,Z_1),(h_1,w_1)),D\hat\Psi_{(\gamma,\tau)}((v_2,Z_2),(h_2,w_2))\Big)\\
    +D(\Theta,\Phi)\circ D^2\hat\Psi_{(\gamma,\tau)}\Big(((v_1,Z_1),(h_1,w_1)),((v_2,Z_2),(h_2,w_2))\Big).
\end{multline}
The derivatives of $\hat{\Psi}_{(\gamma,\tau)}$ are given by 
\begin{equation}\label{a3}
    D\hat{\Psi}_{(\gamma,\tau)}((v_1,Z_1),(h_1,w_1))=(su^{s-1}v_1\gamma+h_1, -\tfrac 32 s u^{-\frac 32 s-1}v(\tau+\mathfrak L_\gamma Y)+ u^{-\frac 32s}\mathfrak L_\gamma Z_1+w_1)
\end{equation}
and 
\begin{multline}\label{a4}
    D^2\hat{\Psi}_{(\gamma,\tau)}\Big(((v_1,Z_1),(h_1,w_1)),((v_2,Z_2),(h_2,w_2))\Big)=(s(s-1)u^{s-2}v_1v_2,\\
  \tfrac 32 s(\tfrac 32 s+1)u^{-\frac 32 s-2}v_1v_2(\tau+\mathfrak L_\gamma Y)-\tfrac 32 s u^{-\frac 32 s-1}v_2\mathfrak L_\gamma Z_1 -\tfrac 32 s u^{-\frac 32 s-1}v_1\mathfrak L_\gamma Z_2  ).
\end{multline}
In these formulas, the differentials are being evaluated at $((u,Y),(h,w))$ or $\hat\Psi_{(\gamma,\tau)}((u,Y),(h,w))$, wherever appropriate.

To prove \eqref{first}, we use \eqref{a1} for $(\gamma,\tau)$ and $(g,\pi)$ at $((1,0),(0,0))$, which yields 
\[D\hat{\mathcal P}_{(\gamma,\tau)}-D\hat{\mathcal P}_{(g,\pi)}=D(\Phi,\Theta)|_{(\gamma,\tau)}(D\hat\Psi_{(\gamma,\tau)}-D\hat\Psi_{(g,\pi)})+(D(\Phi,\Theta)|_{(\gamma,\tau)}-D(\Phi,\Theta)|_{(g,\pi)})D\hat\Psi_{(g,\pi)}.\]
For $(\gamma,\tau)$ sufficiently close to $(g,\pi)$, we may evidently estimate both of these terms (in operator norm) using \eqref{a3} and the estimate \eqref{grad1}. 

To prove \eqref{second}, we note that \eqref{a2} implies 
\begin{align*}\|D^2\hat{\mathcal P}_{(\gamma,\tau)}|_{((u,Y),(h,w))}\|&\le  
\left\|D^2(\Theta,\Phi)|_{\hat\Psi_{(\gamma,\tau)}((u,Y),(h,w))}\right\|\cdot \left\|D\hat\Psi_{(\gamma,\tau)}|_{((u,Y),(h,w))}\right\|^2\\
&\quad  \quad + \left\|D(\Theta,\Phi)|_{\hat\Psi_{(\gamma,\tau)}((u,Y),(h,w))}\right\|\cdot\left\|D^2\hat\Psi_{(\gamma,\tau)}|_{((u,Y),(h,w))}\right\|.\end{align*}
For $((u,Y),(h,w))$ small, $\hat{\Psi}_{(\gamma,\tau)}((u,Y),(h,w))$ is close to $(g,\pi)$ in $\mathcal D$, so we may apply \eqref{grad1} and \eqref{grad2}. Furthermore, the same estimates may be derived for $D\hat\Psi$ and $D^2\hat\Psi$ from \eqref{a3} and \eqref{a4}. This completes the proof of \eqref{second}.\end{proof}

\end{appendix}

\bibliographystyle{alpha}
\bibliography{bib}
\end{document}